\def\namedlabel#1#2{\begingroup
    #2%
    \def\@currentlabel{#2}%
    \phantomsection\label{#1}\endgroup
}
\newtheorem{definition}{Definition}
\newtheorem{prop}{Proposition}
\newtheorem{lemma}{Lemma}
\newtheorem{remark}{Remark}
\newtheorem{fact}{Fact}
\numberwithin{equation}{section}
\DeclareMathOperator*{\argmin}{arg\,min}
\DeclarePairedDelimiterX{\inp}[2]{\langle}{\rangle}{#1, #2}
\title{An Inexact Frank-Wolfe Algorithm for Composite Convex Optimization Involving a Self-Concordant Function}
\author{Nimita Shinde\thanks{Department of Industrial and Systems Engineering, Lehigh University, Bethlehem, PA 18015 (nis623@lehigh.edu)}
\and Vishnu Narayanan\thanks{Industrial Engineering and Operations Research, Indian Institute of Technology Bombay, Powai, Mumbai, 400076 India (vishnu@iitb.ac.in)}
\and James Saunderson\thanks{Department of Electrical and Computer Systems Engineering, Monash University, Clayton, VIC 3800, Australia (james.saunderson@monash.edu)}}
\date{}
\begin{document}

\maketitle

\begin{abstract}
In this paper, we consider Frank-Wolfe-based algorithms for composite convex optimization problems with objective involving a logarithmically-homogeneous, self-concordant functions. Recent Frank-Wolfe-based methods for this class of problems assume an oracle that returns exact solutions of a linearized subproblem. We relax this assumption and propose a variant of the Frank-Wolfe method with inexact oracle for this class of problems. We show that our inexact variant enjoys similar convergence guarantees to the exact case, while allowing considerably more flexibility in approximately solving the linearized subproblem. In particular, our approach can be applied if the subproblem can be solved prespecified additive error or to prespecified relative error (even though the optimal value of the subproblem may not be uniformly bounded). Furthermore, our approach can also handle the situation where the subproblem is solved via a randomized algorithm that fails with positive probability. Our inexact oracle model is motivated by certain large-scale semidefinite programs where the subproblem reduces to computing an extreme eigenvalue-eigenvector pair, and we demonstrate the practical performance of our algorithm with numerical experiments on problems of this form.

\end{abstract}

\section{Introduction}
Consider a composite optimization problem
\begin{equation}\tag{SC-OPT}\label{eqn:SC-gendef}
\min_{x\in \mathbb{R}^n} F(x) \equiv  \min_{x\in \mathbb{R}^n} f(\mathcal{B}(x)) + g(x),
\end{equation}
where $\mathcal{B}: \mathbb{R}^n \rightarrow \mathbb{R}^d$ is a linear map, $f$ is a convex, $\theta$-logarithmically-homogeneous, $M$-self-concordant barrier for a proper cone $\mathcal{K}$, and 
$g$ is a proper, closed, convex and possibly nonsmooth function with compact domain. We further assume that 
$\textup{int}(\mathcal{K})\cap \mathcal{B}(\textup{dom}(g)) \neq \emptyset$, ensuring that there exists at least one feasible point for~\eqref{eqn:SC-gendef}. Self-concordant functions are thrice differentiable convex functions whose third directional derivatives are bounded by their second directional derivatives (see Section~\ref{sec:Prelim} for the exact definition and key properties of these functions).
Self-concordant functions appear in several applications such as machine learning, and image processing. Several second-order~\cite{tran2015composite, dinh2013proximal, tran2014inexact,liu2021newton} as well as first-order~\cite{FW,dvurechensky2020self,dvurechensky2022generalized,zhao2022analysis} methods have been used to solve~\eqref{eqn:SC-gendef} (see Section~\ref{sec:Intro-Review} for more details).

For large-scale applications of~\eqref{eqn:SC-gendef}, one natural algorithmic approach is to use the Frank-Wolfe algorithm~\cite{frank1956algorithm}, which proceeds by partially linearizing the objective at the current iterate $x^t$, solving the linearized problem
\begin{equation}\label{prob:LinearMinimizationProblem}\tag{Lin-OPT}
\min_{h\in \mathbb{R}^n} F^{\textup{lin}}_{x^t}(h) \equiv \min_{h\in \mathbb{R}^n} \langle \mathcal{B}^*(\nabla f(\mathcal{B}(x^t))), h\rangle + g(h),
\end{equation}
and taking the next iterate to be a convex combination of $x^t$ and an optimal point for~\eqref{prob:LinearMinimizationProblem}. 
It is challenging to analyse the convergence of such first-order methods for self-concordant functions because their gradients are generally unbounded on their domains, and so  are not globally Lipschitz continuous. Despite this,  when $f$ is $\theta$-logarithmically homogeneous and self-concordant, Zhao and Freund~\cite{zhao2022analysis} established convergence guarantees for the Frank-Wolfe method when the step direction and step size are carefully chosen in a way that depends on the optimal solution of~\eqref{prob:LinearMinimizationProblem} (see 
Algorithm~\ref{algo:GFW}). This limits the applicability of the method in situations where it may not be possible to solve~\eqref{prob:LinearMinimizationProblem} to high accuracy under reasonable time and storage constraints.

In this paper, we extend the generalized Frank-Wolfe method of~\cite{zhao2022analysis} to allow for inexact solutions to the linearized subproblem~\eqref{prob:LinearMinimizationProblem} at each iteration. 
Our inexact oracle model is flexible enough to allow for situations (a) where the subproblem is only solved to a  specified relative error, and (b) where the method for solving the subproblem is randomized and fails with positive probability. Despite this flexible oracle model, we establish convergence guarantees that are comparable to those given by~\citet{zhao2022analysis} for the method with exact oracle. While the high-level strategy of the analysis is broadly inspired by that of~\cite{zhao2022analysis}, our arguments are somewhat more involved.

\begin{algorithm}[h]
\caption{Generalized Frank-Wolfe Algorithm}
\label{algo:GFW}
\textbf{Input}: Problem~\eqref{eqn:SC-gendef} where $f(\cdot)$ is a 2-self-concordant, $\theta$-logarithmically-homogeneous barrier function
\begin{algorithmic}[1] 
\STATE Initialize $x^0 \in \mathbb{R}^n$.
\FOR{$t = 0,1,\dotsc$}
\STATE \textbf{Compute update direction $h^t$.} Compute $\nabla f(\mathcal{B}(x^t))$ and $h^t \in \argmin_{h\in \mathbb{R}^n} F^{\textup{lin}}_{x^t}(h)$.
\STATE \textbf{Compute duality gap $G_t$, and distance between $h^t$ and $x^t$.} Compute $G_t = F^{\textup{lin}}_{x^t}(x^t) - F^{\textup{lin}}_{x^t}(h^t)$ and $D_t = \|\mathcal{B}(h^t - x^t)\|_{\mathcal{B}(x^t)}$.
\STATE \textbf{Compute step-size $\gamma_t$.} Set $\gamma_t = \min \left\{ \frac{G_t}{D_t(D_t+G_t)},1\right\}$.
\STATE Update $x^{t+1} = (1-\gamma_t)x^t + \gamma_t h^t$.
\ENDFOR
\end{algorithmic}
\end{algorithm}

\subsection{Questions addressed by this work}\label{sec:Intro-Issues}
In this section, we discuss three ideas to further generalize the generalized Frank-Wolfe method~\cite{zhao2022analysis}. 
These modifications help in addressing the following questions and we discuss them in detail in this section.
\begin{enumerate}
\item What if we can only generate an approximate minmizer to the linear minimizer problem~\eqref{prob:LinearMinimizationProblem}, i.e., we can only solve the problem in Step 3 of Algorithm~\ref{algo:GFW} to some given additive error?
\item What if we only have access to a method to solve~\eqref{prob:LinearMinimizationProblem}, that fails, with some nonzero probability, to return a point with specified approximation error?
\item What if we only have access to a method that can solve~\eqref{prob:LinearMinimizationProblem} with some specified relative (rather than additive) error?
\end{enumerate}

\paragraph{Motivating example.} A key motivating example for the use of an inexact 
oracle in the  probabilistic setting is the case where $g(\cdot)$ is the indicator function for the set
\begin{equation}\label{eqn:TraceSetS}
\mathcal{S}=\{X\in \mathbb{S}^n:\textup{X}\succeq 0, \textup{Tr}(X) = 1\},
\end{equation} whose extreme points are the rank-1 PSD matrices. The problem of optimizing convex functions over the set $\mathcal{S}$ prominently features in the study of memory-efficient techniques for semidefinite programming (see, e.g.,~\cite{shinde2021memory,yurtsever2019scalable}).  For such problems, generating an optimal solution to~\eqref{prob:LinearMinimizationProblem} is equivalent to determining the smallest eigenvalue-eigenvector pair of $\mathcal{B}^*(\nabla f(\mathcal{B}(X^t)))$. For large-scale problems, it is typically not practical to compute such an eigenvalue-eigenvector pair to a very high accuracy at each iteration.
Furthermore, scalable extreme eigenvalue-eigenvector algorithms (such as the Lanczos method with random initialization~\cite{kuczynski1992estimating}) usually only give relative error guarantees, and are only guaranteed to succeed with probability at least $1-p$ for some $p\in(0,1)$.

Furthermore, for large-scale instances, it is essential to have a memory-efficient way to represent the decision variable. Recent memory-efficient techniques such as sketching~\cite{yurtsever2017sketchy,yurtsever2019scalable,tropp2017practical,tropp2019streaming} or sampling~\cite{shinde2021memory} involve alternate low-memory representation of the decision variable and provide provable convergence guarantees of first-order methods that employ these low-memory representations. In Section~\ref{sec:SC-Barrier}, we briefly discuss how such memory-efficient techniques can be combined with our proposed method specifically when applied to a semidefinite relaxation of the problem of optimizing quadratic form over the sphere (see Section~\ref{sec:Intro-OptQuadForm}).

\paragraph{}
We now describe the setting for each question listed in this section.

\paragraph{Inexact Linear Minimization Oracle}
An inexact minimization oracle that we consider here does not need to generate an optimal solution to~\eqref{prob:LinearMinimizationProblem}, i.e., it does not need to solve the problem exactly in order to provide provable convergence guarantees for the algorithm. Before defining the oracle precisely, we look at how an inexact oracle changes the definition of quantities in Algorithm~\ref{algo:GFW}.
We first define the approximate duality gap, $G_t^a$, as
\[G_t^a(x^t,h^t) = F^{\textup{lin}}_{x^t}(x^t)-F^{\textup{lin}}_{x^t}(h^t).\] Here, $x^t$ is the value of the decision variable at iterate $t$ of our method, and $h^t$ is the approximate minimizer of~\eqref{prob:LinearMinimizationProblem}.
Note that, if $h^t$ is replaced by $h^{\star}$, an optimal solution to~\eqref{prob:LinearMinimizationProblem}, then  $G_t^a$ becomes equal to the Frank-Wolfe duality gap $G_t$~\cite{FW}. In Algorithm~\ref{algo:GFW}, $G_t$ is used to define the step length $\gamma_t$. Thus, in Algorithm~\ref{algo:GFW}, an exact oracle is needed to compute the update direction as well as the step length. 

We consider a setting where we compute the update direction by computing an approximate minimizer, $h^t$, to~\eqref{prob:LinearMinimizationProblem}, and correspondingly defining $G_t^a$ and the step length $\gamma_t$ which are functions of $h^t$. For our iterative algorithm to be a descent method, we need $G_t^a \geq 0$. Thus, this gives us the criteria that the output of inexact oracle must satisfy. Furthermore, our analysis shows that if, at iterate $t$, $G_t^a$ is large enough, we can ensure the algorithm makes progress, even if the approximation error in~\eqref{prob:LinearMinimizationProblem} is not small. Thus, we define an inexact oracle \texttt{ILMO} as follows:
\begin{definition}[\texttt{ILMO}]\label{def:ILMO}
\texttt{ILMO} is an oracle that takes $x^t$, $\nabla \mathcal{B}^*(f(\mathcal{B}(x^t)))$ (gradient of the objective function  at $x^t$), and a nonnegative quantity $\delta_t> 0$ as an input, and outputs a pair $(h^t,G_t^a)$, where $G_t^a$ is the approximate duality gap, such that $G_t^a\geq 0$, and at least one of the following two conditions are satisfied:
\begin{enumerate}[leftmargin=0.7cm, label=\textbf{C.\arabic*},ref=C.\arabic*]
\item \label{Cond1} (large gap condition)  $G_t^a > \theta +R_g$, where $R_g$ is the maximum variation in $g(\cdot)$ over its domain,
\item \label{Cond2} ($\delta_t$-suboptimality condition)
$F^{\textup{lin}}_{x^t}(h^t) \leq F^{\textup{lin}}_{x^t}(h^{\star}) + \delta_t$,
where $h^{\star}$ is an optimal solution to~\eqref{prob:LinearMinimizationProblem}.
\end{enumerate}
\end{definition}
Note that, condition~\ref{Cond2} ($\delta_t$-suboptimality) is equivalent to stating that the output of \texttt{ILMO}, $h^t$, is an $\delta_t$-optimal solution to~\eqref{prob:LinearMinimizationProblem}, and thus, we do not need to solve~\eqref{prob:LinearMinimizationProblem} exactly. Furthermore, from our analysis, we observed that when $G_t^a$ is lower bounded by $\theta+R_h$, $h^t$ need not be a $\delta_t$-optimal solution. Thus, the definition of \texttt{ILMO} enforces that $h^t$ needs to be $\delta_t$-optimal only when $G_t^a \leq \theta+R_h$, which can lead to reduced computational complexity of solving~\eqref{prob:LinearMinimizationProblem}.
In Section~\ref{sec:AGFW}, we propose approximate Frank-Wolfe algorithm (see Algorithm~\ref{algo:AGFW}), that only requires access to \texttt{ILMO}, and furthermore, we establish convergence guarantees comparable to that of Algorithm~\ref{algo:GFW} that requires exact oracle.

\paragraph{Inexact oracle with a nonzero probability of failure.} 
We consider an inexact oracle \texttt{PLMO} that has a nonzero probability $p$ of failure as defined below.
\begin{definition}[\texttt{PLMO}]\label{def:PLMO}
 \texttt{PLMO} is an oracle that takes $x^t$, $\mathcal{B}^*(\nabla f(\mathcal{B}(x^t)))$  (the gradient of the objective function at $x^t$), and $\delta_t$ as input, and returns a pair $(h^t,G_t^a)$ such that $G_t^a \geq 0$ and with probability at least $1-p$, at least one of the two conditions~\ref{Cond1} (large gap) and~\ref{Cond2} ($\delta_t$-suboptimality) is satisfied.
\end{definition}

The key difference between \texttt{PLMO} and \texttt{ILMO} (Definition~\ref{def:ILMO}) is the nonzero probability of failure of the oracle \texttt{PLMO}. Indeed, \texttt{ILMO} is a special case of \texttt{PLMO} when $p=0$.
In Section~\ref{sec:ProbLMO}, we consider this probabilistic setting for the oracle. We provide convergence analysis of our proposed approximate generalized Frank-Wolfe algorithm, when the linear minimization oracle fails to satisfy both conditions~\ref{Cond1} (large gap) and~\ref{Cond2} ($\delta_t$-suboptimality) with probability at most $p$.


\paragraph{Implementing the inexact oracles}
Our inexact oracle model is flexible enough that it can be implemented using an algorithm for solving~\eqref{prob:LinearMinimizationProblem} to either prespecified additive error or prespecified multiplicative error.
The multiplicative error case is of interest when
$g(\cdot)$ is the indicator function of a compact
 convex set so that $g(x) = 0$ for all feasible $x$. In this case, it turns out (see Lemma~\ref{lem:ind-opt-bnd}) that the optimal value of~\eqref{prob:LinearMinimizationProblem}, $F^{\textup{lin}}_{x^t}(h^{\star})$, is negative. By an algorithm that solves~\eqref{prob:LinearMinimizationProblem} to multiplicative error $\tau_t\in [0,1)$, we mean an algorithm that returns a feasible point $h^t$ that satisfies
\begin{equation*}
F^{\textup{lin}}_{x^t}(h^{\star}) \leq F^{\textup{lin}}_{x^t}(h^t)  \leq (1-\tau_t)F^{\textup{lin}}_{x^t}(h^{\star}),
\end{equation*}
This corresponds to an additive error of $\tau_t (-F^{\textup{lin}}_{x^t}(h^\star))$. 
The multiplicative error case is particularly challenging because the gradient of a self-concordant barrier, $\nabla f(\mathcal{B}(x^t))$, grows without bound near the boundary of the domain of $f(\cdot)$, meaning that $F^{\textup{lin}}_{x^t}(h^\star)$ is not uniformly bounded. Therefore, a small relative error for~\eqref{prob:LinearMinimizationProblem} (i.e., a small value of $\tau_t$) may still lead to large absolute errors. In Section~\ref{sec:AddMultiErr} we show how the large gap condition (Condition~\ref{Cond1}) in our oracle model allows us to overcome this issue.

\subsection{Applications}\label{sec:Intro-Applications}
Consider the $M$-self-concordant, $\theta$-logarithmically-homogeneous barrier function
\begin{equation}\label{eqn:SC-Type1}
f(\mathcal{B}(x)) = -\sum_{i=1}^m c_i\log(a_i^Tx),
\end{equation}
with $M=\max_i \frac{2}{\sqrt{c_i}}$ and $\theta = \sum_{i=1}^m c_i$, where the linear map $\mathcal{B}(\cdot): \mathbb{R}^n\rightarrow \mathbb{R}^m$, maps the decision variable $x\in \mathbb{R}^n$ to an $m$-dimensional vector $[a_i^Tx]_{i=1}^m$.
In this section, we review two applications (see Subsections~\ref{sec:Intro-OptQuadForm} and~\ref{sec:Intro-MLQST}) whose convex formulations involve minimizing functions of the form~\eqref{eqn:SC-Type1} over $\mathcal{S}$~\eqref{eqn:TraceSetS}. The problems of this type are also often studied in the context of memory-efficient algorithms. One of the contributions of this paper is to show that, for these applications,  we can combine memory-efficient representations~\cite{shinde2021memory} of the decision variable with the first-order algorithm proposed in this paper.

Historically, self-concordant barrier functions were often studied because they allow for an affine-invariant analysis of Newton's method~\cite{nesterov1994interior}. Recently, problems with self-concordant barriers in the objective are also studied in the context of machine learning, often as a loss function~\cite{bach2010self,owen2013self,marteau2019beyond}.
 The function~\eqref{eqn:SC-Type1} appears in several applications such as maximum-likelihood quantum state tomography~\cite{hradil1997quantum}, portfolio optimization~\cite{vardi1993image,cover1984algorithm,zhao2021non}, and maximum likelihood Poisson inverse problems~\cite{harmany2011spiral,nowak1998multiscale,nowak2000multiscale} in a variety of imaging applications such as night vision, low-light imaging.
Furthermore, convex formulations of problems such as $D$-optimal design~\cite{de1995d,atwood1969optimal} and learning sparse Gaussian Markov Random Fields~\cite{dinh2013proximal,rue2005gaussian} also involve optimizing a 2-self-concordant, $n$-logarithmically-homogeneous barrier function ($f(X) = -\log(\det(X))$, with $X\in \mathbb{S}^n_{++}$). The functions $-\log \det X$ and $-\log x$ are also commonly used self-concordant barriers for the PSD cone and $\mathbb{R}_+$, respectively. 


\subsubsection{Optimizing quadratic form over the sphere} \label{sec:Intro-OptQuadForm}
Consider the problem
\begin{equation}\label{prob:QuadSphere}\tag{GMean}
\max_{\|x\|=1,x\in \mathbb{R}^n} \prod_{i=1}^d \langle xA_i,x\rangle^{1/d},
\end{equation}
where $A_i\in \mathbb{S}^n_+$ for $i=1,\dotsc,d$. Many applications such as approximating the permanent of PSD matrices or  
portfolio optimization take the form of maximizing quadratic forms $\prod_{i=1}^d \langle xA_i,x\rangle$ over $ \{x\in \mathbb{R}^n:\|x\|=1\}$. By normalizing the objective of this problem, we get~\eqref{prob:QuadSphere}, whose objective is the geometric mean of the quadratic forms. \citet[Theorem~7.1]{yuan2021semidefinite} shows that when $d = \Omega(n)$,~\eqref{prob:QuadSphere} is NP-hard.
A natural semidefinite relaxation of~\eqref{prob:QuadSphere} is
\begin{equation}\label{prob:SCQuadProdSphere}
\max_X \prod_{i=1}^d \langle A_i,X\rangle^{1/d}\quad \textup{subject to}\ 
\begin{cases}
\textup{Tr}(X) = 1,\\
X\succeq 0.
\end{cases}
\end{equation}
\citet{yuan2021semidefinite} show that solving the SDP relaxation~\eqref{prob:SCQuadProdSphere} and appropriately rounding the solution to have rank one, gives a constant factor approximation algorithm for~\eqref{prob:QuadSphere}. For $\mathcal{S}$ given in~\eqref{eqn:TraceSetS}, we have an equivalent problem
\begin{equation}\label{prob:SC-SumLogDef}\tag{SC-GMean}
\min_{X\in \mathcal{S}} -\sum_{i=1}^d \log(\langle A_i,X\rangle),
\end{equation}
where the objective is a 2-self-concordant, $d$-logarithmically-homogeneous barrier function, and~\eqref{prob:SC-SumLogDef} is equivalent to~\eqref{prob:SCQuadProdSphere} up to the scaling of the objective.
So, an optimal solution of~\eqref{prob:SC-SumLogDef} is also an optimal solution of~\eqref{prob:SCQuadProdSphere}.
\citet[Theorem~1.3]{yuan2021semidefinite} show that if $X^\star$ is optimal for~\eqref{prob:SCQuadProdSphere} and $y\sim \mathcal{N}(0,X^\star)$ then $y/\|y\|$ feasible for~\eqref{prob:QuadSphere} and within a factor of $e^{-L_r}$ of the optimal value, where $L_r = \gamma + \log 2 + \psi \left(\frac{r}{2}\right) - \log \left(\frac{r}{2}\right) < 1.271$, $\gamma$ is the Euler-Mascheroni constant, $\psi(r)$ is the digamma function.


\subsubsection{Maximum-likelihood Quantum State Tomography (ML-QST)}\label{sec:Intro-MLQST}
The problem with structure similar to~\eqref{prob:SC-SumLogDef} also appears in Quantum State Tomography (QST). 
QST is the process of estimating an unknown quantum state from given measurement data. We consider that a measurement is a random variable $z$ which takes the values in the set $[N] = \{1,\dotsc,N\}$ with probability distribution
$P(z = i) = \langle B_i,X\rangle, \forall i \in [N]$,
where each $B_i$ is PSD complex matrix such that $\sum_{i=1}^N B_i = I$, $X\in \mathcal{S}$, where $\mathcal{S}$ is defined in~\eqref{eqn:TraceSetS}. Note that, if there are $q$ qubits, then $n = 2^q$.
Maximum-likelihood estimation of quantum states~\cite{hradil1997quantum,hradil20043} involves finding the state (represented by $X\in \mathcal{S}$) that maximizes the probability of observing the measurement. Assume that we have $m$ measurements $z_1,\ldots,z_m$ where $z_j\in [N]$ for $j=1,\ldots,m$. The maximum-likelihood QST problem then takes the form
$\max_{X\in \mathcal{S}} \prod_{j=1}^m \langle A_j, X\rangle$,
where $A_j = B_{z_j}$ for $j=1,\dotsc, m$.
Several 
results for maximum likelihood estimation of QST are provided in literature (\cite{glancy2012gradient,baumgratz2013scalable,scholten2018behavior}).
The problem can be
equivalently stated as
\begin{equation}\label{prob:MLQST}\tag{ML-QST}
\min_{X\in \mathcal{S}} -\sum_{j=1}^m \log(\langle A_j,X\rangle).
\end{equation}
Note that~\eqref{prob:MLQST} is identical  to~\eqref{prob:SC-SumLogDef} and its objective is a 2-self-concordant, $m$-logarithmically-homogeneous barrier.

\subsection{Contributions} 
The focus of this paper is to provide a framework to solve a minimization problem of the form~\eqref{eqn:SC-gendef}, where the objective function is a self-concordant, logarithmically-homogeneous barrier function. 
We provide an algorithmic framework that addresses the three questions stated in Section~\ref{sec:Intro-Issues}. The main contributions of the paper are summarized below.
\begin{itemize}
\item We provide an algorithmic framework, that generates an $\epsilon$-optimal solution to~\eqref{eqn:SC-gendef}. The proposed algorithm (Algorithm~\ref{algo:AGFW}) only requires access to an inexact oracle \texttt{ILMO} (Definition~\ref{def:ILMO}). 
We provide provable convergence guarantees (Lemmas~\ref{lemma:ConvDet-OL} and~\ref{lemma:ConvDet-CL}) that are comparable to guarantees given by~\citet{zhao2022analysis} whose method requires access to an exact oracle.

\item We consider the setting where the inexact oracle can provide an approximate solution to~\eqref{prob:LinearMinimizationProblem} with either additive or multiplicative error bound. We show how to implement the oracle that satisfies the output criteria of \texttt{ILMO}. In this scenario, the convergence guarantees given by Lemmas~\ref{lemma:ConvDet-OL} and~\ref{lemma:ConvDet-CL} for Algorithm~\ref{algo:AGFW} still hold true.


\item We next consider a setting where the linear minimization oracle fails to satisfy both conditions~\ref{Cond1} (large gap) and~\ref{Cond2} ($\delta_t$-suboptimality) with probability at most $p>0$. In this setting, we show that we can bound, with probability at least $1-p$, the number of iterations of Algorithm~\ref{algo:AGFW} required to converge to an $\epsilon$-optimal solution of~\eqref{eqn:SC-gendef} (see Lemmas~\ref{lemma:ConvProb-OL} and~\ref{lemma:ConvProb-CL}).

\item We apply our algorithmic framework specifically to~\eqref{prob:SC-SumLogDef} and provide the computational complexity of our proposed method. We also show that our method can be combined with a memory-efficient technique given by~\citet{shinde2021memory}, so that the method uses at most $\mathcal{O}(n+d)$ memory (apart from the memory used to store the input parameters). We also implement our method as well as Algorithm~\ref{algo:GFW}, and use these algorithms to generate an $\epsilon$-optimal solution to randomly generated instances of~\eqref{prob:SC-SumLogDef}. We show that, for these instances, the behavior of our method (in terms of change in optimality gap with each iteration) is similar to that of Algorithm~\ref{algo:GFW}. We also show that the theoretical upper bound on the number of iterations to converge to an $\epsilon$-optimal solution is quite conservative, and in practice, the algorithms generate the desired solution much quicker.

\end{itemize}

\subsection{Brief Review of First- and Second-Order Methods for~\eqref{eqn:SC-gendef}}\label{sec:Intro-Review}
It is easy to see that interior-point methods~\cite{nesterov2003introductory, nemirovski2008interior} can be used to solve composite problems of type~\eqref{eqn:SC-gendef}. In fact, a standard technique while using IPMs for such problems first eliminates the nonsmooth term $h(x)$ and additionally uses (self-concordant) barrier functions for inequality constraints. The method then solves a sequence of problems to generate an approximate solution to~\eqref{eqn:SC-gendef}. 
Such methods often result in the loss of underlying structures (for example, sparsity) of the input problem.
To overcome this issue, proximal gradient-based methods~\cite{tran2015composite, dinh2013proximal, tran2014inexact} directly solve the nonsmooth problem~\eqref{eqn:SC-gendef} and provide local convergence guarantees ($\|x^{t+1}-x^t\|_{x^t}\leq \epsilon$, where $\|u\|_{x^t} = \sqrt{u^T\nabla^2f(x^t)u}$) for~\eqref{eqn:SC-gendef}. \citet{liu2021newton} use projected Newton method to solve $\min_{x\in\mathcal{C}} f(x)$, where $f(\cdot)$ is self-concordant, such that the projected Newton direction is computed in each iteration of the method using an adaptive Frank-Wolfe method to solve the subproblem. They show that the outer loop (projected Newton) and the inner loop (Frank-Wolfe) have the complexity of $\mathcal{O}(\log (1/\epsilon))$ and $\mathcal{O}(1/\epsilon^\nu)$ respectively, where $\nu = 1+\frac{\log(1-2\beta)}{\log(\sigma)}$, for some constants $\sigma, \beta > 0$, where $(\beta,\sigma)$ satisfy the conditions outlined in~\cite[Theorem~4.2]{liu2021newton}.

For large-scale applications of~\eqref{eqn:SC-gendef}, Frank-Wolfe and other first-order methods are often preferred since, at each iteration either an exact or approximate solution is generated to the linear minimization problem~\eqref{prob:LinearMinimizationProblem}
which can lead to low-computational complexity per iteration. However, the analysis of such methods usually depends on the assumption that $f$ has a Lipschitz continuous gradient~\cite{FW}, which a self-concordant function does not satisfy.  There are algorithms (for example \cite{dvurechensky2020self}) that provide new policies to compute step length at each iteration of Frank-Wolfe and provide convergence guarantees for the modified algorithm. However, these guarantees depend on the Lipschitz constant of the objective on a sublevel set, which might not be easy to compute and in some cases may not even be bounded. 

\subsection{Notations and Outline}
\paragraph{Notations.}
We use small alphabets (for instance, $x$) to denote a vector variables, and big alphabets (for example, $X$) to denote matrix variables. We use superscripts $(x^{(t)})$ to denote values of parameters and variables at $t$-th iteration of the algorithm.
The term $\left\langle A, B \right\rangle = \textup{Tr}\left(A^TB\right)$ denotes the matrix inner product.
The notations $\nabla g(\cdot)$ and $\nabla^2g(\cdot)$ are used to denote the gradient and the Hessian respectively of a twice differentiable function $g$. The linear map $\mathcal{B}(\cdot):\mathbb{S}^n\rightarrow \mathbb{R}^d$ maps a symmetric $n\times n$ matrix to a $d$-dimensional space and $\mathcal{B}^*(\cdot)$ is its adjoint.
The notation $\mathcal{O}$ has the usual complexity interpretation.

\paragraph{Outline.}
In Section~\ref{sec:Prelim}, we review the generalized Frank-Wolfe algorithm and key properties of a 2-self-concordant, $\theta$-logarithmically-homogeneous function. We also define the terminology used in the rest of the paper.
In Section~\ref{sec:AGFW}, we propose an approximate generalized Frank-Wolfe algorithm (Algorithm~\ref{algo:AGFW}) that generates an $\epsilon$-optimal solution to~\eqref{eqn:SC-gendef}, and we provide convergence analysis of the algorithm.
In Section~\ref{sec:AddMultiErr}, we show how an oracle that generates an approximate solution with relative error can be used with our proposed method (Algorithm~\ref{algo:AGFW}).
In Section~\ref{sec:ProbLMO}, we propose a method (Algorithm~\ref{algo:AGFW}) that, with high probability, generates an $\epsilon$-optimal solution to~\eqref{eqn:SC-gendef} when the oracle that solves the linear subproblem has a nonzero probability of failure.
In Section~\ref{sec:SC-Barrier}, we apply our method specifically to~\eqref{prob:SC-SumLogDef} and provide the convergence analysis of the method. We also show that our method can be combined with memory-efficient techniques from literature so that it can implemented in $\mathcal{O}(n+d)$ memory.
Finally, in Section~\ref{sec:NE}, we provide computational results of applying Algorithm~\ref{algo:AGFW} to random instances of~\eqref{prob:SC-SumLogDef}.

\section{Preliminaries}\label{sec:Prelim}
In this section, we review key properties of self-concordant, $\theta$-logarithmically-homogeneous barrier functions as well as other quantities that are useful in our analysis. For a detailed review of self-concordant functions, refer to~\citet{nesterov1994interior}.
\begin{definition}\cite{nesterov1994interior}\label{def:SCFunction}
Let $\mathcal{K}$ be a regular cone, and
let $f$ be a thrice-continuously differentiable, strictly convex function defined on $\textup{int}(\mathcal{K})$. Define 
the third-order directional derivative $\nabla^3 f(w)[u_1,u_2,u_3] = \frac{\partial^3}{\partial t_1 \partial t_2 \partial t_3} f(w+t_1u_1 + t_2u_2 + t_3u_3)|_{t_1=t_2=t_3 = 0}$. Then $f(w)$ is a $M$-self-concordant, $\theta$-logarithmically-homogeneous barrier on  $\textup{int}(\mathcal{K})$ if
\begin{enumerate}
\item $f(w_i)\rightarrow \infty$ along every sequence $\{w_i\} \in \textup{int}(\mathcal{K})$ that converges to a boundary point of $\mathcal{K}$,
\item $\left|\nabla^3 f(w)[u,u,u]\right| \leq M\left( u^T\nabla^2 f(w)u\right)^{3/2}$ for all $w\in \textup{int}(\mathcal{K})$ and $u\in \mathbb{R}^d$, and
\item $f(cw) = f(w) - \theta \log(c)$ for all $w\in \textup{int}(\mathcal{K})$ and $c > 0$.
\end{enumerate}
\end{definition}

\paragraph{Properties of a $2$-self concordant, $\theta$-logarithmically homogeneous barrier function $f$.} 
Let $\mathcal{K}$ be a proper cone and let $f(\cdot)$ be a $2$-self-concordant, $\theta$-logarithmically-homogeneous barrier function on $\textup{int}(\mathcal{K})$.
For any $u\in \textup{int}(\mathcal{K})$, let $\mathcal{H}(u)$ denote the Hessian  of $f(\cdot)$ at $u$. Define the (semi)norm $\|w\|_u = \sqrt{\langle w,\mathcal{H}(u)w\rangle}$ for any $w\in \mathbb{R}^d$. Also, define $\mathcal{D}(u,1) = \{w\in \mathcal{K}:\|w-u\|_u < 1\}$ which is often referred to as the unit Dikin ball. Define a univariate function $\omega$ and its Fenchel conjugate as follows:
\begin{align}
\omega(a) &= \begin{cases}-a - \log(1-a) & \textup{if $a<1$}\\\infty & \textup{otherwise}.\end{cases}\\
\omega^{\star}(a) &= \begin{cases} a - \log(1+a) & \textup{if $a>-1$}\\ \infty & \textup{otherwise.}\end{cases}
\end{align}

\begin{prop}[Proposition~2.3.4, Corollary~2.3.1,2.3.3~\cite{nesterov1994interior}]\label{prop:SCLHProperties}
Let $\mathcal{K}$ be a proper cone and let $f(\cdot)$ be a $2$-self-concordant, $\theta$-logarithmically-homogeneous barrier function on $\textup{int}(\mathcal{K})$. If $u \in \textup{int}(\mathcal{K})$ then 
\begin{align}
\label{eqn:SCSOCCond} f(w) - f(u) -\langle\nabla f(u), w-u\rangle &\leq \omega(\|w-u\|_u)\quad \forall w\in \mathcal{D}(u,1)\\
\label{eqn:SCProp1}|\langle\nabla f(u), w\rangle|&\leq \sqrt{\theta}\|w\|_u, \quad \quad\;\,\forall w\in \mathbb{R}^d\\
\label{eqn:SCProp2}\|w\|_u &\leq -\langle \nabla f(u),w\rangle\quad \forall w\in \mathcal{K}\\
\label{eqn:SCProp3}\langle \nabla f(u),w\rangle &= -\langle \mathcal{H}(u)u,w\rangle \quad \forall w\in \mathbb{R}^d\\
\label{eqn:SCProp4}\langle \nabla f(u),u\rangle &= -\theta\quad \textup{and}\\
\label{eqn:SCProp5}\theta &\geq 1.
\end{align}
\end{prop}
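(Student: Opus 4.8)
The statement collects the classical facts about $2$-self-concordant, $\theta$-logarithmically-homogeneous barriers (and indeed is attributed to~\cite{nesterov1994interior}), so my plan is to recover them directly from Definition~\ref{def:SCFunction}, organized in dependency order, using only elementary one-dimensional self-concordance estimates. First I would extract the homogeneity identities. Differentiating $f(cw) = f(w) - \theta\log c$ with respect to the scalar $c$ and setting $c = 1$ gives $\langle \nabla f(w), w\rangle = -\theta$, which is~\eqref{eqn:SCProp4}. Differentiating the same relation with respect to the argument gives $c\,\nabla f(cu) = \nabla f(u)$, i.e. $\nabla f$ is homogeneous of degree $-1$; differentiating \emph{that} in $c$ at $c = 1$ yields $\nabla f(u) + \mathcal{H}(u)u = 0$, which is~\eqref{eqn:SCProp3}, and one more differentiation in the argument shows $\nabla^2 f$ is homogeneous of degree $-2$, a fact I will reuse below. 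For~\eqref{eqn:SCProp1}, since $\mathcal{K}$ is proper the Hessian $\mathcal{H}(u)$ is positive definite, so the dual local norm of a covector $s$ is $\sqrt{\langle s, \mathcal{H}(u)^{-1}s\rangle}$; using~\eqref{eqn:SCProp3} to write $\mathcal{H}(u)^{-1}\nabla f(u) = -u$ and then~\eqref{eqn:SCProp4} gives $\langle \nabla f(u), \mathcal{H}(u)^{-1}\nabla f(u)\rangle = -\langle \nabla f(u), u\rangle = \theta$, and the Cauchy--Schwarz inequality in the $\mathcal{H}(u)$-inner product then gives $|\langle \nabla f(u), w\rangle| \leq \sqrt{\theta}\,\|w\|_u$.

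The substantive step is~\eqref{eqn:SCProp2}. I would argue first for $w \in \textup{int}(\mathcal{K})$: the ray $u + tw$ stays in $\textup{int}(\mathcal{K})$ for all $t \geq 0$, so $\phi(t) := f(u + tw)$ is convex and finite on $[0,\infty)$. Logarithmic homogeneity rewrites $\phi(t) = f(w + u/t) - \theta\log t$, so $\phi(t) \to -\infty$; and degree $-2$ homogeneity of the Hessian gives $\phi''(t) = t^{-2}\langle w, \nabla^2 f(w + u/t)w\rangle = O(1/t^2)$, so $\phi''$ is integrable on $[0,\infty)$, $\phi'$ has a limit at infinity, and matching $\phi(t) \sim -\theta\log t$ against $\phi(t) = \phi(0) + \phi'(\infty)\,t + o(t)$ forces $\phi'(\infty) = 0$. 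Hence $\langle -\nabla f(u), w\rangle = -\phi'(0) = \int_0^\infty \phi''(t)\,dt$. The one-dimensional self-concordance bound $|\phi'''| \leq 2(\phi'')^{3/2}$ gives $\bigl|\tfrac{d}{dt}\phi''(t)^{-1/2}\bigr| \leq 1$, hence $\phi''(t)^{-1/2} \leq \phi''(0)^{-1/2} + t$ and so $\phi''(t) \geq \bigl(\phi''(0)^{-1/2} + t\bigr)^{-2}$; integrating over $[0,\infty)$ yields $\int_0^\infty \phi''(t)\,dt \geq \sqrt{\phi''(0)} = \|w\|_u$, which is~\eqref{eqn:SCProp2} for interior $w$. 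Since $\mathcal{K} = \overline{\textup{int}(\mathcal{K})}$ and both sides are continuous in $w$, this extends to all $w \in \mathcal{K}$.

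The remaining two items follow quickly. For~\eqref{eqn:SCProp5}, combining~\eqref{eqn:SCProp3} and~\eqref{eqn:SCProp4} gives $\|u\|_u^2 = \langle u, \mathcal{H}(u)u\rangle = -\langle \nabla f(u), u\rangle = \theta$, and applying~\eqref{eqn:SCProp2} with $w = u \in \mathcal{K}$ gives $\sqrt{\theta} = \|u\|_u \leq \langle -\nabla f(u), u\rangle = \theta$, hence $\theta \geq 1$ (using $\theta > 0$, which holds for a proper cone). For~\eqref{eqn:SCSOCCond}, the inscribed Dikin ellipsoid property ensures the segment $u + t(w - u)$, $t \in [0,1]$, lies in $\textup{int}(\mathcal{K})$ whenever $w \in \mathcal{D}(u,1)$; restricting $f$ to this segment and writing $r = \|w - u\|_u < 1$, the same differential inequality run in the opposite direction gives $\psi''(t) \leq r^2/(1 - rt)^2$, and integrating twice from $0$ to $1$ produces exactly $\psi(1) - \psi(0) - \psi'(0) \leq -r - \log(1 - r) = \omega(r)$, which is~\eqref{eqn:SCSOCCond}.

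The part I expect to require the most care is~\eqref{eqn:SCProp2}: everything else is bookkeeping with the homogeneity identities and textbook self-concordance estimates, but in~\eqref{eqn:SCProp2} one has to genuinely exploit logarithmic homogeneity to pin down the asymptotics $\phi'(\infty) = 0$ and to justify that $\phi''$ is integrable, and then convert the resulting identity $\langle -\nabla f(u), w\rangle = \int_0^\infty \phi''$ into the lower bound $\|w\|_u$ via the self-concordant differential inequality. A secondary point to handle cleanly is the domain/limit bookkeeping (the interior-ray argument, and the passage to boundary $w$ by density), which I would state but not belabor.
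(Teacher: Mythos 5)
Your derivation is correct, but note that the paper does not prove this proposition at all: it is stated as a collection of known facts and attributed directly to Nesterov and Nemirovskii (Proposition~2.3.4 and Corollaries~2.3.1, 2.3.3 of~\cite{nesterov1994interior}), with no argument supplied. What you have written is essentially a self-contained reconstruction of the textbook proofs, organized in the right dependency order: the Euler-type identities \eqref{eqn:SCProp3} and \eqref{eqn:SCProp4} from differentiating the homogeneity relation, \eqref{eqn:SCProp1} via Cauchy--Schwarz in the $\mathcal{H}(u)$-inner product once $\|u\|_u^2=\theta$ is known, the recession-direction argument with the one-dimensional self-concordance inequality $\bigl|\tfrac{d}{dt}\phi''(t)^{-1/2}\bigr|\leq 1$ for \eqref{eqn:SCProp2}, and the double integration of $\psi''(t)\leq r^2/(1-rt)^2$ for \eqref{eqn:SCSOCCond}. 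All of these steps check out; the only places requiring the care you already flag are the asymptotics $\phi'(\infty)=0$ (correctly forced by convexity plus the $-\theta\log t$ growth) and the density/continuity passage from $\textup{int}(\mathcal{K})$ to $\mathcal{K}$ in \eqref{eqn:SCProp2}. Since the paper treats these as black-box facts, your write-up adds a proof where the paper deliberately has none; for the purposes of this paper a citation suffices, but your argument would be a valid appendix-level substitute.
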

These properties will be used later in the proofs.

\subsection{Terminology}\label{sec:terminology}
We now define a few quantities that are used in the rest of the paper. 
\begin{itemize}
\item Frank-Wolfe duality gap, $G_t(x^t)$~\cite{FW}: For any feasible point $x^t$ to~\eqref{eqn:SC-gendef}, let $h^{\star}$ be an optimal solution to~\eqref{prob:LinearMinimizationProblem}. Then, 
\begin{equation}\label{eqn:DefGk}
G_t(x^t) = \langle \nabla f(\mathcal{B}(x^t)),\mathcal{B}(x^t- h^{\star})\rangle + g(x^t) - g(h^{\star}) = F^{\textup{lin}}_{x^t}(x^t) - F^{\textup{lin}}_{x^t}(h^\star).
\end{equation}
\item Approximate Frank-Wolfe duality gap, $G_t^a(x^t,h^t)$: For any two feasible points $x^t$, $h^t$ to~\eqref{eqn:SC-gendef}, we have
\begin{equation}\label{eqn:DefGkApprox}
G_t^a(x^t,h^t) = \langle \nabla f(\mathcal{B}(x^t)),\mathcal{B}(x^t- h^{t})\rangle + g(x^t) - g(h^{t})= F^{\textup{lin}}_{x^t}(x^t) - F^{\textup{lin}}_{x^t}(h^t).
\end{equation}
\textbf{Note:} The duality gap, $G_t(x^t)$, is a function of the current iterate $x^t$, whereas the approximate duality gap, $G_t^a(x^t,h^t)$, is a function of the current iterate $x^t$, and any feasible point $h^t$ to~\eqref{eqn:SC-gendef}. However, for the ease of notation, we denote $G_t(x^t)$ by $G_t$, and $G_t^a(x^t,h^t)$ by $G_t^a$ in the rest of the paper when $x^t$ and $h^t$ are clear from the context.
\item Optimality gap $\Delta_t$: If $x^{\star} = \argmin_{x\in \mathbb{R}^n}F(x)$, i.e., $x^{\star}$ is an optimal solution to~\eqref{eqn:SC-gendef}, and $x^t$ is a feasible point to~\eqref{eqn:SC-gendef} then
\begin{equation}\label{eqn:RelationdeltaGk}
\Delta_t(x^t) = F(x^t) - F(x^{\star}).
\end{equation}
When $x^t$ is clear from the context we denote $\Delta_t(x^t)$ by $\Delta_t$.
\item Maximum variation of $g(\cdot)$ over its domain, $R_g$: We define as $R_g$ as
\begin{equation}\label{eqn:DefRh}
R_g = \max_{x_1,x_2\in \textup{dom}(g)} |g(x_1)-g(x_2)|.
\end{equation}
\item Distance, $D_t$: For any two feasible points $x^t$ and $h^t$ to~\eqref{eqn:SC-gendef}, we define 
\begin{equation}\label{eqn:DefDt}
D_t(x^t,h^t) = \|\mathcal{B}(h^t-x^t)\|_{\mathcal{B}(x^t)},
\end{equation}
where $\|\cdot\|_{\mathcal{B}(x^t)}$ is the (semi)norm defined by the Hessian of $f(\cdot)$ at $\mathcal{B}(x^t)$. When $x^t$ and $h^t$ are clear from the context, we denote $D_t(x^t,h^t)$ by $D_t$.
\end{itemize}
The following standard result shows that the optimality gap is bounded by the duality gap.
\begin{fact}
\label{fct:gap-bnd}
For any feasible point $x^t$ to~\eqref{eqn:SC-gendef}, we have $\Delta_t \leq G_t$.
\end{fact}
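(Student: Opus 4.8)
The plan is the standard Frank--Wolfe argument, instantiated for the composite objective $F = f\circ\mathcal{B} + g$: bound $F(x^\star)$ from below by the value of the linearized objective at $x^\star$ (using convexity of $f$), and then bound that in turn by the optimal value of the linearized subproblem (using optimality of $h^\star$).

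Concretely, first I would write $g_t \coloneqq \mathcal{B}^*(\nabla f(\mathcal{B}(x^t)))$ for the gradient of the smooth part pulled back through $\mathcal{B}$, so that $F^{\textup{lin}}_{x^t}(h) = \langle g_t, h\rangle + g(h)$. Convexity of $f$ (the gradient inequality) gives $f(\mathcal{B}(x^\star)) \ge f(\mathcal{B}(x^t)) + \langle g_t, x^\star - x^t\rangle$, hence adding $g(x^\star)$ to both sides, $F(x^\star) \ge f(\mathcal{B}(x^t)) - \langle g_t, x^t\rangle + F^{\textup{lin}}_{x^t}(x^\star)$. Since $x^\star$ is feasible for~\eqref{prob:LinearMinimizationProblem} and $h^\star$ is a minimizer of $F^{\textup{lin}}_{x^t}$, we have $F^{\textup{lin}}_{x^t}(x^\star) \ge F^{\textup{lin}}_{x^t}(h^\star)$, so $F(x^\star) \ge f(\mathcal{B}(x^t)) - \langle g_t, x^t\rangle + F^{\textup{lin}}_{x^t}(h^\star)$.

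Next I would record the companion identity $F(x^t) = f(\mathcal{B}(x^t)) + g(x^t) = f(\mathcal{B}(x^t)) - \langle g_t, x^t\rangle + F^{\textup{lin}}_{x^t}(x^t)$, which just adds and subtracts $\langle g_t, x^t\rangle$. Subtracting the previous display from this one, the terms $f(\mathcal{B}(x^t)) - \langle g_t, x^t\rangle$ cancel and we are left with $\Delta_t = F(x^t) - F(x^\star) \le F^{\textup{lin}}_{x^t}(x^t) - F^{\textup{lin}}_{x^t}(h^\star) = G_t$, using the definition~\eqref{eqn:DefGk} of the Frank--Wolfe gap. This completes the argument.

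I do not expect any genuine obstacle here; this is a routine convexity-plus-optimality computation and the only points requiring a word of care are that $x^\star$ is feasible for the linearized subproblem (which holds because $\textup{dom}(g)$ is the effective feasible set for both~\eqref{eqn:SC-gendef} and~\eqref{prob:LinearMinimizationProblem}), and that $\nabla f(\mathcal{B}(x^t))$ is well-defined, i.e.\ $\mathcal{B}(x^t) \in \textup{int}(\mathcal{K})$, which is part of what ``feasible point'' means for~\eqref{eqn:SC-gendef}.
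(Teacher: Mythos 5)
Your proposal is correct and follows essentially the same route as the paper's proof: the gradient inequality for the convex function $f$ applied at $w = x^\star$, followed by the optimality of $h^\star$ for~\eqref{prob:LinearMinimizationProblem} to replace $x^\star$ by $h^\star$ in the linearized bound. The only difference is cosmetic bookkeeping via the adjoint $\mathcal{B}^*$ and an add-and-subtract of $\langle g_t, x^t\rangle$, which the paper handles by rearranging directly.
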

\begin{proof}
Since $f$ is convex, for any $x^t$ and $w$ that are feasible for~\eqref{eqn:SC-gendef}, 
\[ 0 \leq f(\mathcal{B}(w)) - \left[f(\mathcal{B}(x^t)) - \langle \nabla f(\mathcal{B}(x^t)), \mathcal{B}(w-x^t)\rangle\right].\]
Setting $w$ to be $x^\star$, an optimal point for~\eqref{eqn:SC-gendef}, and rearranging gives
\begin{align*}
    \Delta_t = f(\mathcal{B}(x^t)) - f(\mathcal{B}(x^\star)) + g(x^t) - g(x^\star) & \leq \langle \nabla f(\mathcal{B}(x^t)),\mathcal{B}(x^t-x^\star)\rangle + g(x^t) - g(x^\star).
    \end{align*}
Finally, if $h^\star$ is optimal for~\eqref{prob:LinearMinimizationProblem} 
then 
\begin{equation*}
    \langle \nabla f(\mathcal{B}(x^t)),\mathcal{B}(x^t-x^\star)\rangle + g(x^t) - g(x^\star)
     \leq \langle \nabla f(\mathcal{B}(x^t)),\mathcal{B}(x^t-h^\star)\rangle + g(x^t) - g(h^\star) = G_t,
\end{equation*}
completing the argument.
\end{proof}

\section{Approximate Generalized Frank-Wolfe algorithm}\label{sec:AGFW}
Consider the composite optimization problem~\eqref{eqn:SC-gendef},
where $f(\cdot)$ is a $\theta$-logarithmically-homogeneous, $M$-self-concordant barrier function on $\textup{int}(\mathcal{K})$, $\mathcal{K}$ is a proper cone, and $g(\cdot)$ is a proper, closed, convex and possibly nonsmooth function with $\textup{dom}(g)$ a closed, compact, nonempty set.
\citet{zhao2022analysis} propose a Generalized Frank-Wolfe algorithm (Algorithm~\ref{algo:GFW}) that requires an exact solution to the subproblem~\eqref{prob:LinearMinimizationProblem} at each iteration. In their algorithm, the knowledge of the exact minimizer is necessary to compute the update step, $h^{\star}$, as well as the step length (since the step length depends on the duality gap $G_t(x^t)$). Furthermore, the analysis of the algorithm also depends on the exact minimizer of the subproblem~\eqref{prob:LinearMinimizationProblem}. 
We, however, propose an `approximate' generalized Frank-Wolfe algorithm.
The term approximate in the name of our algorithm comes from the fact that we do not require the linear minimization subproblem~\eqref{prob:LinearMinimizationProblem} to be solved exactly at each iteration. The framework of our method is given in Algorithm~\ref{algo:AGFW}. We provide a detailed explanation of Steps 5 and 6 (setting the value of $\delta_t$ and \texttt{ILMO} respectively) below.

\begin{algorithm}[tbh]
\caption{Approximate Generalized Frank-Wolfe}
\label{algo:AGFW}
\textbf{Input}: Problem~\eqref{eqn:SC-gendef}, where $f(\cdot)$ is a 2-self-concordant, $\theta$-logarithmically-homogeneous barrier function, $\epsilon\leq \theta+R_g$: suboptimality parameter, scheduled strategy~\ref{Strat1} or adaptive strategy~\ref{Strat2} for setting the value of $\delta_t$\\
\textbf{Output}: $\widehat{x}_{\epsilon}$, such that $\widehat{x}_{\epsilon}$ an $\epsilon$-optimal solution to~\eqref{eqn:SC-gendef}
\begin{algorithmic}[1] 
\STATE Initialize $x^0 \in \mathbb{R}^n$.
\STATE Set $t\leftarrow 0$.
\WHILE{true}
\STATE Compute $\mathcal{B}^{*}(\nabla f(\mathcal{B}(x^t)))$.
\STATE Set $\delta_t \geq 0$ according to the input strategy.
\STATE $(h^t,G_t^a) = \texttt{ILMO} (\mathcal{B}^{*}(\nabla f(\mathcal{B}(x^t))),\delta_t, x^t)$.
\IF {$G_t^a \leq \epsilon$ and $\delta_t \leq 3\epsilon/2$} \STATE \textbf{return $(x_t,\delta_t)$}.\ENDIF
\STATE Compute $D_t = \|\mathcal{B}(h^t-x^t)\|_{\mathcal{B}(x^t)}$.
\STATE Set $\gamma_t = \min \left\{ \frac{G_t^a}{D_t(D_t+G_t^a)},1\right\}$.
\STATE Update $x^{t+1} = (1-\gamma_t)x^t + \gamma_t h^t$.
\STATE $t = t+1$.
\ENDWHILE
\end{algorithmic}
\end{algorithm}

Note that any $M$-self concordant function can be converted to a $2$-self concordant function $f(u)$ by scaling the objective. We define this property more formally in the following proposition.

\begin{prop}\label{prop:GenSC-2SC}
Let $\overline{f}(u)$ be an $M$-self concordant, $\overline{\theta}$-logarithmically homogeneous function. Define a function $f(u)$ and parameter $\theta$ such that
\begin{enumerate}
\item $f(u) = \frac{M^2}{4}\overline{f}(u)$,
\item $\theta= \frac{M^2}{4}\overline{\theta}$.
\end{enumerate}
Then $f(u)$ is a $2$-self concordant, $\theta$-logarithmically homogeneous function. 
\end{prop}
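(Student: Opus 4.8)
The plan is to verify directly from Definition~\ref{def:SCFunction} that the rescaled function $f(u) = \tfrac{M^2}{4}\overline{f}(u)$ satisfies the three defining properties with self-concordance parameter $2$ and logarithmic-homogeneity parameter $\theta = \tfrac{M^2}{4}\overline{\theta}$. Since multiplication by the positive constant $\tfrac{M^2}{4}$ preserves thrice-continuous differentiability and strict convexity, and does not affect the domain $\textup{int}(\mathcal{K})$, the only work is checking the three numbered conditions.

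First I would check the barrier property (condition~1): along any sequence $\{w_i\}\subset\textup{int}(\mathcal{K})$ converging to $\partial\mathcal{K}$ we have $\overline{f}(w_i)\to\infty$, hence $f(w_i) = \tfrac{M^2}{4}\overline{f}(w_i)\to\infty$ as well. Next, for the self-concordance inequality (condition~2), note that $\nabla^2 f(w) = \tfrac{M^2}{4}\nabla^2\overline{f}(w)$ and $\nabla^3 f(w) = \tfrac{M^2}{4}\nabla^3\overline{f}(w)$, so for any $u$,
\begin{align*}
\left|\nabla^3 f(w)[u,u,u]\right| &= \tfrac{M^2}{4}\left|\nabla^3\overline{f}(w)[u,u,u]\right| \leq \tfrac{M^2}{4}\cdot M\left(u^T\nabla^2\overline{f}(w)u\right)^{3/2}\\
&= \tfrac{M^2}{4}\cdot M\left(\tfrac{4}{M^2}\right)^{3/2}\left(u^T\nabla^2 f(w)u\right)^{3/2} = 2\left(u^T\nabla^2 f(w)u\right)^{3/2},
\end{align*}
which is exactly the $2$-self-concordance bound. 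Finally, for logarithmic homogeneity (condition~3), $f(cw) = \tfrac{M^2}{4}\overline{f}(cw) = \tfrac{M^2}{4}\left(\overline{f}(w) - \overline{\theta}\log c\right) = f(w) - \theta\log c$ with $\theta = \tfrac{M^2}{4}\overline{\theta}$.

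I do not anticipate a genuine obstacle here; the statement is essentially a bookkeeping exercise, and the only point requiring the slightest care is tracking the power $3/2$ correctly when substituting $\nabla^2\overline{f} = \tfrac{4}{M^2}\nabla^2 f$ into the self-concordance inequality — the factors combine as $\tfrac{M^2}{4}\cdot M\cdot\tfrac{8}{M^3} = 2$, giving the claimed constant. One might additionally remark that this rescaling is the standard normalization used to pass from a general $M$-self-concordant barrier to the $M=2$ case assumed throughout the paper, which is why Algorithm~\ref{algo:AGFW} is stated for $2$-self-concordant functions without loss of generality.
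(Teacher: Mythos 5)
Your verification is correct: the arithmetic $\tfrac{M^2}{4}\cdot M\cdot\left(\tfrac{4}{M^2}\right)^{3/2}=2$ checks out, and the barrier and homogeneity conditions are indeed trivially preserved under positive scaling. The paper states this proposition without proof, treating it as the standard normalization fact; your direct check of the three conditions of Definition~\ref{def:SCFunction} is exactly the routine argument the authors are implicitly relying on, so there is nothing to add.
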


Proposition~\ref{prop:GenSC-2SC} tells us that, by rescaling, we can always reduce to the regime of functions that are 2-self-concordant (often called standard self-concordant). However, if we are minimizing such a function, rescaling affects the optimal value, and hence affects approximation error guarantees.
Thus, even though the input to Algorithm~\ref{algo:AGFW}  is specifically a problem with 2-self-concordant barrier function in the objective, by replacing $\epsilon$ by $\frac{M^2}{4}\epsilon$ in Step 7 of the algorithm, we can generate an $\epsilon$-optimal solution to~\eqref{eqn:SC-gendef}, when $f(\cdot)$ is an $M$-self-concordant, $\theta$-logarithmically-homogeneous barrier function.

\paragraph{Choosing $\delta_t$.}
We propose two different strategies to choose the value of $\delta_t$ in Step 5 of the algorithm.
\begin{enumerate}[leftmargin=0.9cm, label=\textbf{S.\arabic*},ref=S.\arabic*]
\item \label{Strat1} (Scheduled strategy) $(\delta_t)_{t\geq 0}$ is any positive sequence converging to zero. It follows that, given any $\epsilon>0$, there exists some positive integer $K$ such that $\delta_t\leq \epsilon/2$ for all $t \geq K$.
\item \label{Strat2} (Adaptive strategy) $\delta_t \leq \epsilon/2+\min_{\tau<t}G_{\tau}^a\quad \forall t$.
\end{enumerate}

\begin{remark}\label{remark:Strategy2}
The main idea behind adaptive strategy~\ref{Strat2} is to allow the target accuracy of \texttt{ILMO} to 
adapt to the progress of the algorithm (in terms of the smallest approximate duality gap we have encountered so far). This allows the \texttt{ILMO} to solve to low accuracy until significant progress has been made in terms of approximate duality gap. Since we are working with the approximate duality gap, and not the actual duality gap, it is possible that $G_t^a = 0$ before the algorithm has reached the desired optimality gap. We use the additive quantity $\epsilon/2$ to ensure that we can choose $\delta_t > 0$ even when $G_{\tau}^a = 0$ for some $\tau<t$.
\end{remark}

\paragraph{\texttt{ILMO} in Algorithm~\ref{algo:AGFW}.}
The main difference between generalized Frank-Wolfe~\cite{zhao2022analysis} and Algorithm~\ref{algo:AGFW}, is the definition of the oracle. The generalized Frank-Wolfe algorithm (Algorithm~\ref{algo:GFW}) requires \texttt{LMO} to generate an optimal solution to the subproblem~\eqref{prob:LinearMinimizationProblem} in order to generate update direction and step length, as well as to prove its convergence guarantees. Whereas in Algorithm~\ref{algo:AGFW}, \texttt{ILMO} is an oracle whose output only needs to satisfy $G_t^a\geq 0$ and at least one the two conditions~\ref{Cond1} (large gap) and~\ref{Cond2} ($\delta_t$-suboptimality) (see Definition~\ref{def:ILMO}).

When condition~\ref{Cond2} ($\delta_t$-suboptimality) is satisfied by the output of \texttt{ILMO}, we have the following relationship between the duality gap $G_t$ and the approximate duality gap $G_t^a$.

\begin{lemma}[Relationship between $G_t$ and $G_t^a$]\label{lemma:relationGtGta}
Let $G_t$, $G_t^a$ be as defined in Section~\ref{sec:terminology}. If condition~\ref{Cond2} ($\delta_t$-suboptimality) is satisfied, then 
\begin{equation}
G_t - \delta_t \leq G_t^a \leq G_t.
\end{equation}
\end{lemma}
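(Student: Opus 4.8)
The plan is to unwind the definitions of $G_t$, $G_t^a$ and of condition~\ref{Cond2}, and observe that all three are affine expressions in the same data, so the inequalities reduce to comparing $F^{\textup{lin}}_{x^t}(h^t)$ with $F^{\textup{lin}}_{x^t}(h^{\star})$. Concretely, recall from~\eqref{eqn:DefGk} and~\eqref{eqn:DefGkApprox} that $G_t = F^{\textup{lin}}_{x^t}(x^t) - F^{\textup{lin}}_{x^t}(h^{\star})$ and $G_t^a = F^{\textup{lin}}_{x^t}(x^t) - F^{\textup{lin}}_{x^t}(h^t)$, so that
\begin{equation*}
G_t - G_t^a = F^{\textup{lin}}_{x^t}(h^t) - F^{\textup{lin}}_{x^t}(h^{\star}).
\end{equation*}
The right-hand side is nonnegative because $h^{\star}$ is by definition an optimal (hence minimizing) point of~\eqref{prob:LinearMinimizationProblem}, which immediately gives the upper bound $G_t^a \leq G_t$; this step uses nothing about the oracle.

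For the lower bound, I would invoke condition~\ref{Cond2} directly: it states $F^{\textup{lin}}_{x^t}(h^t) \leq F^{\textup{lin}}_{x^t}(h^{\star}) + \delta_t$, i.e. $F^{\textup{lin}}_{x^t}(h^t) - F^{\textup{lin}}_{x^t}(h^{\star}) \leq \delta_t$. Substituting into the displayed identity yields $G_t - G_t^a \leq \delta_t$, that is, $G_t - \delta_t \leq G_t^a$. Combining the two bounds gives $G_t - \delta_t \leq G_t^a \leq G_t$, as claimed.

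There is essentially no obstacle here — the lemma is a bookkeeping consequence of the definitions, and the only thing to be careful about is making sure the sign conventions in~\eqref{eqn:DefGk} and~\eqref{eqn:DefGkApprox} line up (both are ``value at $x^t$ minus value at the candidate point''), and that condition~\ref{Cond2} is used only in the direction that produces the lower bound while the upper bound needs only optimality of $h^{\star}$. One could present the whole argument in three or four lines of display math; the single identity $G_t - G_t^a = F^{\textup{lin}}_{x^t}(h^t) - F^{\textup{lin}}_{x^t}(h^{\star})$ carries all the content.
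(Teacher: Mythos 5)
Your proof is correct and follows essentially the same route as the paper's: both reduce the claim to the identity $G_t - G_t^a = F^{\textup{lin}}_{x^t}(h^t) - F^{\textup{lin}}_{x^t}(h^{\star})$, using optimality of $h^{\star}$ for the upper bound and condition~\ref{Cond2} for the lower bound. If anything, your write-up is slightly cleaner, since the paper's displayed chain treats the $\delta_t$-suboptimality bound as an equality where an inequality is meant.
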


\begin{proof}
If condition~\ref{Cond2} ($\delta_t$-suboptimality) holds, then Algorithm~\ref{algo:AGFW} generates $h^t$ such that
\begin{equation}\label{eqn:approxLMO}
F^{\textup{lin}}_{x^t}(h^{\star})\leq F^{\textup{lin}}_{x^t}(h^t)= F^{\textup{lin}}_{x^t}(h^{\star}) + \delta_t.
\end{equation}
From the definition of $G_t^a$ (see~\eqref{eqn:DefGkApprox}), we have 
\begin{equation*}
\begin{split}
G_t^a &= F^{\textup{lin}}_{x^t}(x^t) - F^{\textup{lin}}_{x^t}(h^t)\\
&= F^{\textup{lin}}_{x^t}(x^t) - F^{\textup{lin}}_{x^t}(h^{\star}) - \delta_t\\
&= G_t -\delta_t,
\end{split}
\end{equation*}
where the second equality follows from the second inequality in~\eqref{eqn:approxLMO}. We also have
\begin{equation*}
G_t^a \leq F^{\textup{lin}}_{x^t}(x^t) - F^{\textup{lin}}_{x^t}(h^{\star}) = G_t,
\end{equation*}
where the inequality follows from the first inequality in~\eqref{eqn:approxLMO}. Combining the two results completes the proof.
\end{proof}

\paragraph{Computing $\gamma_t$.}
We now explain how the step-size, $\gamma_t$, is selected in Algorithm~\ref{algo:AGFW}. Substituting $x^t$ and $x^{t+1}$ (as given by Algorithm~\ref{algo:AGFW}) in~\eqref{eqn:SCSOCCond}, we have
\begin{equation}\label{eqn:ProofGammat1}
f(\mathcal{B}(x^t + \gamma_t(h^t - x^t))) \leq f(\mathcal{B}(x^t))+ \gamma_t\langle \nabla f(\mathcal{B}(x^t)), \mathcal{B}(h^t-x^t)\rangle + \omega(\gamma_tD_t).
\end{equation}
Furthermore, since $g(\cdot)$ is convex,
\begin{equation}\label{eqn:ProofGammat2}
g(x^t + \gamma_t(h^t-x^t))  = g((1-\gamma_t)x^t + \gamma_th^t) \leq (1-\gamma^t)g(x^t) + \gamma^tg(h^t) = g(x^t) - \gamma_t(g(x^t)-g(h^t)).
\end{equation}
Now, adding~\eqref{eqn:ProofGammat1} and~\eqref{eqn:ProofGammat2}, we have
\begin{equation}\label{eqn:itrIneq}
\begin{split}
F(x^t+\gamma_t(h^t-x^t)) &= f(\mathcal{B}(x^t + \gamma_t(h^t - x^t))) + g(x^t + \gamma_t(h^t-x^t))\\
&\leq F(x^t) -\gamma_t G_t^a +\omega(\gamma_tD_t),
\end{split}
\end{equation}
where the last inequality follows from~\eqref{eqn:DefGkApprox}.
We can now compute an optimal step length $\gamma_t$ at every iteration $t$ by optimizing the right hand side of~\eqref{eqn:itrIneq} for $\gamma_t$ to get
\begin{equation}\label{eqn:DefStepLengthAGFW}
\gamma_t = \min \left\{ \frac{G_t^a}{D_t(D_t+G_t^a)},1\right\}.
\end{equation}
\citet{zhao2022analysis} use a similar technique to compute $\gamma_t$ based on $G_t$. The gap $G_t$ requires the knowledge of an optimal solution to the subproblem~\eqref{prob:LinearMinimizationProblem}, whereas~\eqref{eqn:DefStepLengthAGFW} uses an approximate solution to~\eqref{prob:LinearMinimizationProblem}.

\paragraph{Behavior of $\Delta_t$.} In the following proposition, we give lower bounds on the amount by which $\Delta_t$ decreases at each iteration of Algorithm~\ref{algo:AGFW}. Note that this result is independent of the sequence $(\delta_t)_{t\geq 0}$.

\begin{prop}\label{prop:DeltaConv}
Let~\eqref{eqn:SC-gendef} (with $f(\mathcal{B}(x))$ a 2-self-concordant and $\theta$-logarithmically-homogeneous barrier function) be the input to Algorithm~\ref{algo:AGFW}. Let the quantities $R_g$ and $G_t^a$ be as defined in~\eqref{eqn:DefRh} and~\eqref{eqn:DefGkApprox} respectively. 
For any $t\geq 0$ in Algorithm~\ref{algo:AGFW},
\begin{enumerate}
\item if $G_t^a > \theta +R_g$, we have
\begin{equation}\label{eqn:CaseIKeyEqn}
\Delta_t - \Delta_{t+1} \geq \frac{1}{10.6},
\end{equation}
    \item if $G_t^a\leq \theta +R_g$, we have
\begin{equation}\label{eqn:CaseIIKeyeqn}
\Delta_t - \Delta_{t+1} \geq \frac{(G_t^{a})^2}{12(\theta + R_g)^2}.
\end{equation}
\end{enumerate}
Consequently, $(\Delta_t)_{t\geq 0}$ is a monotonically non-increasing sequence.
\end{prop}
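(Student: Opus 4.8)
The plan is to start from the key one-step inequality~\eqref{eqn:itrIneq}, namely $F(x^{t+1}) \leq F(x^t) - \gamma_t G_t^a + \omega(\gamma_t D_t)$, which after subtracting $F(x^\star)$ reads $\Delta_{t+1} \leq \Delta_t - \gamma_t G_t^a + \omega(\gamma_t D_t)$, so that $\Delta_t - \Delta_{t+1} \geq \gamma_t G_t^a - \omega(\gamma_t D_t)$. Everything then reduces to lower-bounding the right-hand side using the value $\gamma_t = \min\{G_t^a/(D_t(D_t+G_t^a)),1\}$ from~\eqref{eqn:DefStepLengthAGFW}. I would split into the two cases according to whether the minimum defining $\gamma_t$ is attained at the interior value or at $1$, and within each case control $D_t$ in terms of $G_t^a$ and $\theta + R_g$.

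For the case $\gamma_t = G_t^a/(D_t(D_t+G_t^a)) < 1$, substituting this value makes $\gamma_t D_t = G_t^a/(D_t + G_t^a) < 1$, so $\omega(\gamma_t D_t)$ is finite; plugging into $\gamma_t G_t^a - \omega(\gamma_t D_t)$ and simplifying should give exactly $\omega^\star(G_t^a/D_t)$ (this is the standard Frank--Wolfe-with-self-concordance identity, and is where the conjugate pair $\omega,\omega^\star$ earns its keep). Then I need a lower bound on $D_t$ in terms of $G_t^a$: here I would use the self-concordance properties from Proposition~\ref{prop:SCLHProperties}. Specifically, $G_t^a = \langle \nabla f(\mathcal{B}(x^t)), \mathcal{B}(x^t - h^t)\rangle + g(x^t) - g(h^t)$; the $g$-part is bounded by $R_g$, and using~\eqref{eqn:SCProp4} ($\langle \nabla f(u), u\rangle = -\theta$) together with~\eqref{eqn:SCProp1} ($|\langle \nabla f(u), w\rangle| \leq \sqrt{\theta}\|w\|_u$) applied to $w = \mathcal{B}(h^t)$, I get $G_t^a \leq \theta + \sqrt{\theta}\, \|\mathcal{B}(h^t)\|_{\mathcal{B}(x^t)} + R_g$; and since $\mathcal{B}(h^t)$ lies in $\mathcal{K}$, property~\eqref{eqn:SCProp2} combined with the triangle inequality relates $\|\mathcal{B}(h^t)\|_{\mathcal{B}(x^t)}$ to $D_t$. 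This yields something like $G_t^a \leq \theta + R_g + \sqrt{\theta}(D_t + \sqrt\theta)$ — wait, one needs to be careful — more cleanly, bounding gives $D_t \geq (G_t^a - \theta - R_g)/\sqrt{\theta}$ when the right side is positive, and more usefully for the second case a bound like $D_t \leq$ (something linear in $G_t^a$ and $\theta+R_g$). Using $\theta \geq 1$ from~\eqref{eqn:SCProp5} and monotonicity of $\omega^\star$, the interior case with $G_t^a > \theta + R_g$ gives the universal constant $1/10.6$ after plugging in the worst case, and when $G_t^a \leq \theta + R_g$ the bound $\omega^\star(G_t^a/D_t) \geq (G_t^a)^2/(12(\theta+R_g)^2)$ follows from the quadratic lower bound $\omega^\star(a) \geq a^2/(2(1+a/3))$ or similar and the upper bound on $D_t$.

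For the boundary case $\gamma_t = 1$ (which forces $G_t^a \geq D_t(D_t + G_t^a)$, i.e. $D_t$ small), I substitute $\gamma_t = 1$ to get $\Delta_t - \Delta_{t+1} \geq G_t^a - \omega(D_t)$, and use $D_t^2 \leq D_t(D_t+G_t^a) \leq G_t^a$ hence $D_t \leq \sqrt{G_t^a}$ (and $D_t < 1$), so $\omega(D_t) \leq \omega(\sqrt{G_t^a})$; a short estimate of $G_t^a - \omega(\sqrt{G_t^a})$ then gives a bound of the required form in each of the two regimes. Finally, combining all sub-cases and noting that the lower bounds are always nonnegative yields that $(\Delta_t)_{t \geq 0}$ is monotonically non-increasing. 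I expect the main obstacle to be the bookkeeping in case~2: getting the right \emph{upper} bound on $D_t$ in terms of $\theta + R_g$ and $G_t^a$ (rather than just the lower bound) with clean enough constants that the final coefficient comes out as $1/(12(\theta+R_g)^2)$, since this requires carefully combining~\eqref{eqn:SCProp1}, \eqref{eqn:SCProp2}, \eqref{eqn:SCProp4}, the definition of $R_g$, and the crude numerical inequalities for $\omega^\star$ — and similarly pinning down the explicit constant $10.6$ in case~1.
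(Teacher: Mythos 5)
Your overall architecture matches the paper's: the one-step descent inequality~\eqref{eqn:itrIneq}, the identity $\gamma_t G_t^a-\omega(\gamma_t D_t)=\omega^{\star}(G_t^a/D_t)$ for the interior step size, the crucial upper bound $D_t\leq G_t^a+\theta+R_g$ (this is exactly Lemma~\ref{lemma:RelationDtGt} in the appendix, proved by the combination of the properties in Proposition~\ref{prop:SCLHProperties} with the definition of $R_g$ that you anticipate), and the numerical bounds on $\omega^{\star}$ from Zhao--Freund. However, your handling of the boundary case $\gamma_t=1$ has a genuine gap: from $D_t\leq\sqrt{G_t^a}$ you propose $\omega(D_t)\leq\omega(\sqrt{G_t^a})$, but $\omega(a)=+\infty$ for $a\geq 1$, and in this subcase $G_t^a$ is only bounded by $\theta+R_g\geq 1$, so your bound is vacuous precisely in the regime $G_t^a\in[1,\theta+R_g]$ where you need it. The paper instead keeps $D_t$ inside the logarithm: $\gamma_t=1$ forces $G_t^a\geq D_t^2/(1-D_t)$ (in particular $D_t<1$), and then $\omega(D_t)=-D_t-\log(1-D_t)\leq D_t^2/\bigl(2(1-D_t)\bigr)\leq G_t^a/2$, whence $\Delta_t-\Delta_{t+1}\geq G_t^a/2$, which suffices in both regimes.

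Two smaller points. First, your route to a lower bound on $D_t$ (splitting off $\langle\nabla f(\mathcal{B}(x^t)),\mathcal{B}(x^t)\rangle=-\theta$ and using the triangle inequality on $\|\mathcal{B}(h^t)\|_{\mathcal{B}(x^t)}$) only yields roughly $D_t\geq(G_t^a-2\theta-R_g)/\sqrt{\theta}$, which does not establish $D_t\geq 1$ when $G_t^a>\theta+R_g$; the paper applies~\eqref{eqn:SCProp1} directly to $w=\mathcal{B}(h^t-x^t)$ to get $D_t\geq(G_t^a-R_g)/\sqrt{\theta}>\sqrt{\theta}\geq 1$, which is what rules out $\gamma_t=1$ in case 1. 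In your architecture this is not strictly required, since you treat $\gamma_t=1$ separately---but that separate treatment is the broken step, so one of the two must be repaired. Second, the specific inequality $\omega^{\star}(a)\geq a^2/(2(1+a/3))$ is false (e.g.\ at $a=1/2$ the left side is about $0.094$ and the right side about $0.107$); the correct tool, which your ``or similar'' covers, is $\omega^{\star}(s)\geq s^2/3$ on $[0,1/2]$.
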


\begin{proof}
The proof of this result is similar to the proof given in~\cite{zhao2022analysis}. The difference arises due to the slightly different value of the step length $\gamma_t$ that we have used. We give the proof here for completeness. 

\paragraph{Proving~\eqref{eqn:CaseIKeyEqn}.}
Rewriting~\eqref{eqn:DefGkApprox}, we have
\begin{equation}
  -G_t^a = F^{\textup{lin}}_{x^t}(h^t) - F^{\textup{lin}}_{x^t}(x^t) \leq 0,
\end{equation}
since $G_t^a \geq 0$ for all $t$. Thus,
\begin{equation}\label{eqn:GenFWResultPf1}
 \begin{split}
G_t^a &= |\langle \nabla f(\mathcal{B}(x^t)), \mathcal{B}(h^t-x^t)\rangle+ g(h^t) - g(x^t)|\\
&\leq |\langle \nabla f(\mathcal{B}(x^t)), \mathcal{B}(h^t-x^t)\rangle|+|g(h^t) - g(x^t)|.
\end{split}
\end{equation}
Now, using the definition of $D_t$ and~\eqref{eqn:SCProp1}, we can write
\begin{equation}\label{eqn:GenFWResultPf2}
  D_t  = \|\mathcal{B}(h^t-x^t)\|_{\mathcal{B}(x^t)} \geq |\langle \nabla f(\mathcal{B}(x^t)), \mathcal{B}(h^t-x^t)\rangle|/\sqrt{\theta}.
\end{equation}
Combining~\eqref{eqn:GenFWResultPf1} and~\eqref{eqn:GenFWResultPf2}, we bound $D_t$ as
\begin{equation}
  D_t \geq \frac{G_t^a - |g(h^t) - g(x^t)|}{\sqrt{\theta}} \underset{(i)}{\geq} \frac{G_t^a - R_g}{\sqrt{\theta}} \underset{(ii)}{>} \sqrt{\theta} \underset{(iii)}{\geq} 1,
\end{equation}
where (i) uses the definition of $R_g$~\eqref{eqn:DefRh}, (ii) uses the fact that $G_t^a > \theta + R_g$ and (iii) uses~\eqref{eqn:SCProp5}.
Now, since $D_t \geq 1$, it follows that $\gamma_t < 1$, and so, substituting $\gamma_t = \frac{G_t^a}{D_t(G_t^a+D_t)}$ in~\eqref{eqn:itrIneq} and rearranging gives
\begin{equation}\label{eqn:ConstEProof1}
  F(x^{t+1}) \leq F(x^t) - \omega^{\star}\left(\frac{G_t^a}{D_t}\right).
\end{equation}
The relationship between the optimality gap at consecutive iterations is given as
\begin{equation}\label{eqn:ConstEProof2}
 \begin{split}
\Delta_t - \Delta_{t+1} &= F(x^t) - F(x^{t+1})\\ &\geq \omega^{\star}\left(\frac{G_t^a}{D_t}\right) \underset{(i)}{\geq} \omega^{\star}\left(\frac{G_t^a}{G_t^a+\theta+R_g}\right)\geq 0,
\end{split}
\end{equation}
where (i) follows from~\eqref{eqn:RelationDkGk} and the monotonicity of $\omega^{\star}(\cdot)$.
Since $G_t^a > \theta + R_g$, we have $\frac{G_t^a}{G_t^a + \theta + R_g} > \frac{1}{2}$. Now,~\cite[Proposition~2.1]{zhao2022analysis} states that $\omega^{\star}(s) \geq s/5.3$ for all $s\geq 1/2$, and so,
\begin{equation}\label{eqn:ConstEProof3}
 \Delta_t - \Delta_{t+1} \geq \frac{G_t^a}{5.3(G_t^a +\theta+R_g)}> \frac{G_t^a}{5.3\times 2G_t^a} = \frac{1}{10.6},
\end{equation}
where the second inequality follows since $G_t^a> \theta+R_g$, thus proving~\eqref{eqn:CaseIKeyEqn}.

\paragraph{Proving~\eqref{eqn:CaseIIKeyeqn}.}
First, consider the case where $\gamma_t = 1$, which indicates that $D_t(D_t+G_t^a) \leq G_t^a$, which can be rearranged as
\begin{equation}\label{eqn:C2Proof1}
  G_t^a \geq \frac{D_t^2}{1-D_t}.
\end{equation}
From~\eqref{eqn:itrIneq}, we see that
\begin{equation}\label{eqn:C2Proof2}
 \begin{split}
\Delta_t - \Delta_{t+1} &= F(x^t) - F(x^{t+1})\\ &\geq G_t^a - \omega(D_t)\\ &= G_t^a + D_t + \log (1-D_t).
\end{split}
\end{equation}
Combining~\eqref{eqn:C2Proof1},~\eqref{eqn:C2Proof2}, and the fact that $\log(1+s) \geq s - \frac{s^2}{2(1-|s|)}\quad \forall s\in (-1,1)$~\cite[Proposition~2.2]{zhao2022analysis}, we have
\begin{equation}
 \Delta_t - \Delta_{t+1} \geq G_t^a - \frac{D_t^2}{2(1-D_t)}\geq \frac{G_t^a}{2}.
\end{equation}
Furthermore,
\begin{equation}
\Delta_t - \Delta_{t+1} \geq \frac{G_t^a}{2}\underset{(i)}{\geq} \frac{(G_t^{a})^2}{2(\theta+R_g)}\underset{(ii)}{\geq} \frac{(G_t^{a})^2}{2(\theta+R_g)^2}\geq \frac{(G_t^{a})^2}{12(\theta+R_g)^2},
\end{equation}
where (i) uses the fact that $G_t^a \leq \theta + R_g$, and (ii) uses the inequality $\theta + R_g\geq 1$. This completes the proof of~\eqref{eqn:CaseIIKeyeqn} for the case $\gamma_t = 1$.
Now, if $\gamma_t < 1$, then $\gamma_t = \frac{G_t^a}{D_t(G_t^a+D_t)}$, and from~\eqref{eqn:ConstEProof1}, we have
\begin{equation}
\begin{split}
 \Delta_t - \Delta_{t+1} &= F(x^t) - F(x^{t+1})\\ &\geq \omega^{\star}\left(\frac{G_t^a}{D_t}\right)\\
&\underset{(i)}{\geq} \omega^{\star}\left(\frac{G_t^a}{G_t^a + \theta +R_g}\right)\\
&\underset{(ii)}{\geq} \frac{(G_t^{a})^2}{3(G_t^a+\theta + R_g)^2}\\
&\underset{(iii)}{\geq} \frac{(G_t^{a})^2}{12(\theta+R_g)^2},
\end{split}
\end{equation}
where (i) uses~\eqref{eqn:RelationDkGk} and the monotonicity of $\omega^{\star}(\cdot)$, (ii) uses the fact that $\frac{G_t^a}{G_t^a + \theta+R_g}\leq \frac{1}{2}$ and $\omega^{\star}(s) \geq s^2/3 \quad\forall s\in [0,1/2]$~\cite[Proposition~2.1]{zhao2022analysis}, and (iii) uses the fact that $G_t^a \leq \theta+R_g$. This completes the proof of~\eqref{eqn:CaseIIKeyeqn}.

Finally, from~\eqref{eqn:CaseIKeyEqn} and~\eqref{eqn:CaseIIKeyeqn}, we see that $(\Delta_t)_{t\geq 0}$ is a monotonically non-increasing sequence.
\end{proof}

\begin{prop}\label{prop:OutputAGFW}
If $0<\epsilon\leq \theta+R_g$ and Algorithm~\ref{algo:AGFW} stops (i.e., it reaches Step 8) at iteration $t$, we have $\Delta_t\leq G_t^a+\delta_t \leq 5\epsilon/2$.
\end{prop}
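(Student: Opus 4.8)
The plan is to combine three ingredients already established: the stopping rule in Step~7, the dichotomy in the definition of \texttt{ILMO}, and the chain of inequalities $\Delta_t \le G_t \le G_t^a + \delta_t$.

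First I would unpack the stopping condition. Since the algorithm reaches Step~8 at iteration $t$, the \textbf{if} in Step~7 was triggered, so $G_t^a \le \epsilon$ and $\delta_t \le 3\epsilon/2$. The assumption $\epsilon \le \theta + R_g$ then gives $G_t^a \le \epsilon \le \theta + R_g$, so the large-gap condition~\ref{Cond1} (which requires $G_t^a > \theta + R_g$) cannot hold. By Definition~\ref{def:ILMO}, the output $(h^t, G_t^a)$ must satisfy at least one of~\ref{Cond1},~\ref{Cond2}; since~\ref{Cond1} fails, the $\delta_t$-suboptimality condition~\ref{Cond2} holds at iteration $t$.

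Next I would invoke Lemma~\ref{lemma:relationGtGta}: because~\ref{Cond2} holds, $G_t - \delta_t \le G_t^a \le G_t$, and in particular $G_t \le G_t^a + \delta_t$. Combining with Fact~\ref{fct:gap-bnd}, which gives $\Delta_t \le G_t$, yields $\Delta_t \le G_t \le G_t^a + \delta_t$, i.e.\ the first claimed inequality. Finally, substituting the bounds from the stopping rule, $G_t^a + \delta_t \le \epsilon + 3\epsilon/2 = 5\epsilon/2$, which gives the second claimed inequality and completes the argument.

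There is no real obstacle here; the only point requiring a moment's care is the logical step in the second paragraph — that $\epsilon \le \theta + R_g$ is exactly what is needed to rule out~\ref{Cond1} and thereby force~\ref{Cond2}, which is in turn what makes Lemma~\ref{lemma:relationGtGta} applicable. (This also explains the hypothesis $\epsilon \le \theta + R_g$ in the statement and in the input to Algorithm~\ref{algo:AGFW}.) Everything else is a direct concatenation of Fact~\ref{fct:gap-bnd}, Lemma~\ref{lemma:relationGtGta}, and the numerical thresholds in Step~7.
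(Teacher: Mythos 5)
Your proposal is correct and follows the same route as the paper's proof: use the stopping criterion to get $G_t^a\leq\epsilon\leq\theta+R_g$ and $\delta_t\leq 3\epsilon/2$, then chain Fact~\ref{fct:gap-bnd} with Lemma~\ref{lemma:relationGtGta} to obtain $\Delta_t\leq G_t\leq G_t^a+\delta_t\leq 5\epsilon/2$. You make explicit a step the paper leaves implicit — that $\epsilon\leq\theta+R_g$ rules out condition~\ref{Cond1} and hence forces condition~\ref{Cond2}, which is what justifies invoking Lemma~\ref{lemma:relationGtGta} — but this is the same argument, just spelled out more carefully.
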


\begin{proof}
If Algorithm~\ref{algo:AGFW} stops, we have that $0\leq G_t^a\leq  \epsilon \leq \theta+R_g$ and $\delta_t\leq 3\epsilon/2$. From Lemma~\ref{lemma:relationGtGta} and~\eqref{eqn:RelationdeltaGk}, we see that $\Delta_t \leq G_t \leq G_t^a +\delta_t \leq 5\epsilon/2$.
\end{proof}

Note that, as stated in Remark~\ref{remark:Strategy2}, we can replace $\epsilon/2$ by $\epsilon/\kappa$ in the definition of adaptive strategy~\ref{Strat2}. In this regime, Algorithm~\ref{algo:AGFW} stops when $\delta_t\leq (1+1/\kappa)\epsilon$, Thus, we have that, if Algorithm~\ref{algo:AGFW} stops at iteration $t$, then $\Delta_t \leq (2+1/\kappa)\epsilon$ and $X^t$ is an $(2+1/\kappa)\epsilon$-optimal solution to the input problem. For simplicity, we assume $\kappa = 2$ in the rest of the paper.

\subsection{Convergence Analysis of Algorithm~\ref{algo:AGFW}}\label{sec:Analysis-AGFW}

Before providing the convergence analysis of Algorithm~\ref{algo:AGFW}, we introduce a few key concepts and relationships that are used in the analysis. 

\subsubsection{Partitioning the iterates of Algorithm~\ref{algo:AGFW}}
Our analysis depends on decomposing the sequence of iterates generated by Algorithm~\ref{algo:AGFW} into three subsequences depending on the value of $G_t^a$. 
This partitioning depends on a parameter $C\geq 0$, which controls which iterates fall into which subsequence, as well as the logarithmic homogeneity parameter $\theta$ and the maximum variation parameter $R_g$. Note that the definition makes sense for any pair of sequences $(x^t)_{t\geq 0}$ and $(h^t)_{t\geq 0}$ of points feasible for~\eqref{eqn:SC-gendef}, irrespective of how they are generated. 
\begin{definition}
\label{list:SeqIterates}
    Let $(x^t)_{t\geq 0}$ and $(h^t)_{t\geq 0}$ be any sequence of feasible points for~\eqref{eqn:SC-gendef} such that $G_t^a\geq 0$. Given $C\geq 0$, define three subsequences of $\mathbb{N}$ as follows:
    \begin{itemize}\label{list:SeqIterates}
    \item The \emph{$r$-subsequence} $r_1<r_2<\cdots $ consists of those $t\in \mathbb{N}$ for which $G_t^a > \theta+R_g$.
    \item The \emph{$s$-subsequence} $s_1<s_2<\cdots $ consists of those $t\in \mathbb{N}$ for which $\theta+R_g \geq G_t^a$ and $G_t^a\geq \Delta_t/(2C+1)$.
    \item The \emph{$q$-subsequence} $q_1<q_2<\cdots $ consists of those $t\in \mathbb{N}$ for which $\theta+R_g \geq G_t^a$ and $G_t^a < \Delta_t/(2C+1)$.
\end{itemize}
\end{definition}
\begin{remark}
The partitioning of the iterates as in Definition~\ref{list:SeqIterates} refines the partitioning used in~\cite{zhao2022analysis} which considers the situation where the linearized subproblems are solved exactly at each iteration. In that situation, $G_t^a = G_t \geq \Delta_t$ for all $t$ and the partitioning in Definition~\ref{list:SeqIterates} (with $C=0$) only yields the $r$- and $s$-subsequences. One key idea in our analysis is to distinguish the iterates where $G_t^a$ behaves like $G_t$ by providing an upper bound on $\Delta_t$ (the $s$-subsequence), and the iterates where $G_t^a$ does not give such an upper bound on the optimality gap $\Delta_t$ (the $q$-subsequence). For the $s$-subsequence, our analysis broadly follows the approach in~\cite{zhao2022analysis}, showing progress in both the optimality gap and the approximate duality gap. For the $q$-subsequence, we can still ensure progress in terms of the approximate duality gap. If the algorithm is run for sufficiently many iterations, either the $s$-subsequence, or the $q$-subsequence, will be long enough to ensure the stopping criterion is reached. 
\end{remark}

Given a positive integer $N$, define
\begin{align}\label{eqn:DefNrNsNt}
    N_r & = \max\{ j\;:\; r_j \leq N\}\\
    N_s & = \max\{ j\;:\; s_j \leq N\}\\
    N_q & = \max\{ j\;:\; q_j \leq N\}
\end{align}
Note that $N_r+N_s+N_q=N$, since every iterate falls into exactly one of these three sequences.

When the iterates $(x^t)_{t\geq 0}$ and $(h^t)_{t\geq 0}$ are generated by Algorithm~\ref{algo:AGFW}, 
then the optimality gap and approximate duality gap for  different subsequences satisfy certain inequalities. These allow us to analyse the progress of the algorithm  in different ways along the different subsequences. The following result summarizes the key inequalities for each subsequence that will be used in our later analysis.
\begin{lemma}
\label{lem:sub-ineq}
Let $(x^t)_{t\geq 0}$ and $(h^t)_{t\geq 0}$ be sequences of feasible points for~\eqref{eqn:SC-gendef} generated by Algorithm~\ref{algo:AGFW}. Then, for all $i\geq 1$, 
\begin{align}
    \textup{$r$-subsequence:}\quad\Delta_{r_{i}} - \Delta_{r_{i+1}} &\geq \frac{1}{10.6}\\
    \textup{$s$-subsequence:}\quad\Delta_{s_{i}} - \Delta_{s_{i+1}} &\geq \frac{(G_{s_i}^a)^2}{12(\theta+R_g)^2}
    \\
    \textup{$q$-subsequence:}\quad\quad\quad\;\;\quad G_{q_i}^a & \leq \frac{1}{2C}\delta_{q_i}\label{eq:q-ineq}
\end{align}
\end{lemma}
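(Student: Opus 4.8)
The plan is to establish each of the three inequalities by matching the defining condition of the relevant subsequence with the corresponding case of Proposition~\ref{prop:DeltaConv} or the oracle definition. The first two inequalities are essentially immediate restatements; the third requires a short argument combining the $q$-subsequence condition with the large-gap/$\delta_t$-suboptimality dichotomy of \texttt{ILMO}.

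\medskip

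\noindent\textbf{$r$-subsequence.} For any index $t = r_i$ we have $G_t^a > \theta + R_g$ by definition. Since the iterates are generated by Algorithm~\ref{algo:AGFW}, Proposition~\ref{prop:DeltaConv} (case~1) applies and gives $\Delta_{r_i} - \Delta_{r_i + 1} \geq \tfrac{1}{10.6}$. Because $(\Delta_t)_{t\geq 0}$ is monotonically non-increasing (also by Proposition~\ref{prop:DeltaConv}) and $r_{i+1} \geq r_i + 1$, we get $\Delta_{r_i} - \Delta_{r_{i+1}} \geq \Delta_{r_i} - \Delta_{r_i+1} \geq \tfrac{1}{10.6}$.

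\medskip

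\noindent\textbf{$s$-subsequence.} For $t = s_i$ we have $G_t^a \leq \theta + R_g$, so case~2 of Proposition~\ref{prop:DeltaConv} gives $\Delta_{s_i} - \Delta_{s_i+1} \geq \tfrac{(G_{s_i}^a)^2}{12(\theta+R_g)^2}$; monotonicity of $(\Delta_t)$ and $s_{i+1} \geq s_i + 1$ then upgrade this to $\Delta_{s_i} - \Delta_{s_{i+1}} \geq \tfrac{(G_{s_i}^a)^2}{12(\theta+R_g)^2}$, as claimed. (Note the $s$-subsequence condition $G_{s_i}^a \geq \Delta_{s_i}/(2C+1)$ is not needed here; it is recorded for use elsewhere.)

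\medskip

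\noindent\textbf{$q$-subsequence.} This is the one step requiring a genuine (if short) argument, and I expect it to be the main obstacle — one must rule out the large-gap branch of \texttt{ILMO}. Fix $t = q_i$. By definition of the $q$-subsequence, $G_t^a < \Delta_t/(2C+1) \leq \Delta_0/(2C+1)$; more importantly, since $\Delta_t \leq G_t \leq \theta + R_g$ for all feasible iterates (this bound $\Delta_t \le \theta + R_g$, or at least $G_t^a \le \theta+R_g$ on this subsequence, must be invoked), the condition $G_t^a < \Delta_t/(2C+1)$ together with $\Delta_t \le \theta+R_g$ shows $G_t^a$ is strictly smaller than $\theta+R_g$ when $C>0$, so the large-gap condition~\ref{Cond1} of \texttt{ILMO} fails. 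Hence the output of \texttt{ILMO} at iteration $t$ must satisfy condition~\ref{Cond2} ($\delta_t$-suboptimality), and therefore Lemma~\ref{lemma:relationGtGta} gives $G_t \leq G_t^a + \delta_t$. Combining with $G_t \geq \Delta_t > (2C+1)G_t^a \geq (2C)G_t^a + G_t^a$, i.e. $G_t - G_t^a \ge 2C\, G_t^a$, we obtain $2C\, G_t^a \le G_t - G_t^a \le \delta_t$, which rearranges to $G_{q_i}^a \leq \tfrac{1}{2C}\delta_{q_i}$. (When $C = 0$ the inequality~\eqref{eq:q-ineq} is vacuous, being $G_{q_i}^a \le \infty$, so that case needs no argument; the substantive content is for $C>0$.) The delicate point to get exactly right is which chain of inequalities among $G_t^a < \Delta_t/(2C+1)$, $\Delta_t \le G_t$, $G_t \le G_t^a + \delta_t$ is used and in what order, so that the factor $2C$ (rather than $2C+1$ or $C$) emerges cleanly.
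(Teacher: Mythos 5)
Your proof is correct and follows essentially the same route as the paper: the $r$- and $s$-subsequence bounds are read off from Proposition~\ref{prop:DeltaConv} together with monotonicity of $(\Delta_t)$, and the $q$-subsequence bound combines Fact~\ref{fct:gap-bnd}, Lemma~\ref{lemma:relationGtGta}, and the defining inequality $G_{q_i}^a < \Delta_{q_i}/(2C+1)$ to get $2C\,G_{q_i}^a \leq \delta_{q_i}$. One small slip: your parenthetical claim that $G_t \leq \theta + R_g$ ``for all feasible iterates'' is false in general (the duality gap is not uniformly bounded); no such detour is needed, since $G_{q_i}^a \leq \theta + R_g$ is already part of the definition of the $q$-subsequence, which is exactly what the paper invokes to conclude that condition~\ref{Cond2} must hold there.
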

\begin{proof}
First, recall from Proposition~\ref{prop:DeltaConv} that the sequence $(x^t)$ generated by Algorithm~\ref{algo:AGFW} satisfies $\Delta_{t+1}\leq \Delta_t$ for all $t\geq 0$. Since $r_{i+1} \geq r_i + 1$ we have that 
\[ \Delta_{r_i} - \Delta_{r_{i+1}} \geq \Delta_{r_i} - \Delta_{r_{i}+1} \geq \frac{1}{10.6},\]
where the last inequality follows from Proposition~\ref{prop:DeltaConv}. Similarly, since $s_{i+1} \geq s_i + 1$ we have that 
\[ \Delta_{s_i} - \Delta_{s_{i+1}} \geq \Delta_{s_i} - \Delta_{s_{i}+1} \geq \frac{(G_{s_i}^a)^2}{12(\theta+R_g)^2},\]
where the last inequality follows from Proposition~\ref{prop:DeltaConv}. 

Since $\theta+R_g \geq G_{q_i}^a$, it follows from the definition of \texttt{ILMO} that condition~\ref{Cond2} ($\delta_t$-suboptimality) holds, and so  Fact~\ref{fct:gap-bnd} and Lemma~\ref{lemma:relationGtGta} tell us that $\Delta_{q_i} \leq G_{q_i}\leq G_{q_i}^a + \delta_{q_i}$. Since $\Delta_{q_i}/(2C+1) \geq G_{q_i}^a \geq 0$ (by the definition of the $q$-subsequence),  we have that 
\[ G_{q_i}^a \leq \frac{\Delta_{q_i}}{2C+1} \leq \frac{G_{q_i}}{2C+1} \leq \frac{G_{q_i}^a + \delta_{q_i}}{2C+1}.\]
Rearranging gives the inequality $G_{q_i}^a \leq \delta_{q_i}/(2C)$. 
\end{proof}

\subsubsection{Bounding the length of the $r$-subsequence}

We now provide an upper bound on the length of the $r$-subsequence in terms of the initial optimality gap $\Delta_0$.

\begin{lemma}[Bound on $N_r$]\label{lemma:boundNr}
Let $(x^t)_{t\geq 0}$ and $(h^t)_{t\geq 0}$
be sequences of feasible points for~\eqref{eqn:SC-gendef} generated by Algorithm~\ref{algo:AGFW}. For any positive integer $N$, let $N_r$ be defined as in~\eqref{eqn:DefNrNsNt}. Then
$N_r \leq \lfloor 10.6 \Delta_0\rfloor$.
\end{lemma}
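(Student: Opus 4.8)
The plan is to exploit the telescoping structure of the lower bound on the decrease of $\Delta_t$ along the $r$-subsequence. By Lemma~\ref{lem:sub-ineq}, each consecutive pair of $r$-indices satisfies $\Delta_{r_i} - \Delta_{r_{i+1}} \geq \frac{1}{10.6}$. First I would sum this inequality over $i = 1, \dots, N_r - 1$ to get a telescoping sum, yielding $\Delta_{r_1} - \Delta_{r_{N_r}} \geq \frac{N_r - 1}{10.6}$.

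Next I would use the fact that $(\Delta_t)_{t\geq 0}$ is monotonically non-increasing (Proposition~\ref{prop:DeltaConv}) together with nonnegativity of the optimality gap: $\Delta_{r_1} \leq \Delta_0$ since $r_1 \geq 0$, and $\Delta_{r_{N_r}} \geq 0$. Substituting these bounds gives $\Delta_0 \geq \frac{N_r - 1}{10.6}$, i.e. $N_r \leq 10.6\,\Delta_0 + 1$. To sharpen this to $N_r \leq \lfloor 10.6\,\Delta_0\rfloor$, I would observe that the decrease bound of $\frac{1}{10.6}$ actually applies to \emph{all} $N_r$ indices, not just the $N_r - 1$ gaps between them: one can include the step from iteration $r_{N_r}$ to $r_{N_r}+1$, which again gives $\Delta_{r_{N_r}} - \Delta_{r_{N_r}+1} \geq \frac{1}{10.6}$ by Proposition~\ref{prop:DeltaConv}, and $\Delta_{r_{N_r}+1}\geq 0$. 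Chaining all $N_r$ drops from $\Delta_{r_1} \leq \Delta_0$ down through $\Delta_{r_{N_r}+1}\geq 0$ gives $\Delta_0 \geq \frac{N_r}{10.6}$, hence $N_r \leq 10.6\,\Delta_0$, and since $N_r$ is an integer, $N_r \leq \lfloor 10.6\,\Delta_0\rfloor$.

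I do not expect any real obstacle here; the argument is a routine telescoping-plus-monotonicity estimate. The only point requiring a little care is getting the floor function and the exact constant, which hinges on counting $N_r$ decrements rather than $N_r - 1$ — this is why one appeals to Proposition~\ref{prop:DeltaConv} for the extra step beyond $r_{N_r}$ (or, alternatively, notes that Proposition~\ref{prop:DeltaConv}'s bound $\Delta_{r_i} - \Delta_{r_i + 1} \geq \frac{1}{10.6}$ holds at each $r$-index directly and sums these disjoint single-step decreases). Care is also needed that the indices $r_i+1$ are distinct and lie between consecutive $r$-subsequence elements, which follows since $r_{i+1} \geq r_i + 1$.
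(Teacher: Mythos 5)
Your argument is correct and matches the paper's proof essentially step for step: both telescope the per-step decrease $\Delta_{r_i}-\Delta_{r_{i+1}}\geq 1/10.6$ from Lemma~\ref{lem:sub-ineq} across the $r$-subsequence, use $\Delta_{r_1}\leq \Delta_0$, and then append the extra decrement $\Delta_{r_{N_r}}-\Delta_{r_{N_r}+1}\geq 1/10.6$ from Proposition~\ref{prop:DeltaConv} together with nonnegativity and integrality of $N_r$ to obtain the floor. No differences worth noting.
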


\begin{proof}
From Lemma~\ref{lem:sub-ineq} we have that 
\[ \Delta_{r_{i+1}} \leq \Delta_{r_i} - \frac{1}{10.6}\quad\textup{for all $r\geq 1$}.\]
Since $r_1 \geq 0$ we have that $\Delta_{r_1}\leq \Delta_0$.  
Substituting $i=N_r-1$
and recursively applying the bound gives
\[\Delta_{r_{N_r}} \leq \Delta_{r_1} - \frac{N_r-1}{10.6} \leq \Delta_0 - \frac{N_r-1}{10.6}.\]
We also have that $ 0\leq \Delta_{r_{N_r}+1}\leq \Delta_{r_{N_r}} - \frac{1}{10.6}$, from Proposition~\ref{prop:DeltaConv}. Combining these gives
$0 \leq \Delta_0 - \frac{N_r}{10.6}$.  
It follows that $N_r \leq \lfloor 10.6 \Delta_0\rfloor$, since $N_r$ is an integer.
\end{proof}

\subsubsection{Controlling the optimality gap and approximate duality gap along the $s$-subsequence}

In this section, we will show that in any 
sufficiently large interval of the $s$-subsequence, there must be an iterate at which the approximate duality gap is small. To do this, 
we use the following result, a slight modification of~\cite[Proposition~2.4]{zhao2022analysis}.  While~\cite[Proposition~2.4]{zhao2022analysis} bounds the minimum over the first $j+1$ elements in the sequence $(g_j)$, i.e., $\min\{g_0,\dotsc,g_j\}$, Proposition~\ref{prop:ProofGkConv} provides a bound on the minimum over any subsequence of consecutive elements $g_l,\dotsc,g_j$ with $j>l\geq 0$.
\begin{prop}\label{prop:ProofGkConv}
Let $\beta>0$. Consider two nonnegative sequences $(d_j)_{j\geq 0}$ and $(g_j)_{j\geq 0}$ that satisfy
\begin{itemize}
\item $d_{j+1}\leq d_j - g_j^2/\beta$ for all $j\geq 0$ and
\item $g_j \geq d_j$ for all $j\geq 0$.
\end{itemize}
If $j\geq 1$, then $d_j < \beta/j$ and 
\begin{equation}
\min \{g_l,\dotsc, g_j\} < \begin{cases}
    \frac{2\beta}{j} & \textup{if $\lfloor \frac{j+1}{2}\rfloor>l\geq 0$}\\
    \frac{\beta}{\sqrt{(l+1)(j-l)}} & \textup{if $\lfloor \frac{j+1}{2}\rfloor\leq l < j$}
\end{cases}
\end{equation}
If, in addition, $d_0 \leq \beta$, then $d_j\leq \beta/(j+1)$ and 
\begin{equation}
\min \{g_l,\dotsc, g_j\} \leq \begin{cases}
    \frac{2\beta}{j+1} & \textup{if $\lfloor \frac{j}{2}+1\rfloor>l\geq 0$}\\
    \frac{\beta}{\sqrt{(l+2)(j-l)}} & \textup{if $\lfloor \frac{j}{2}+1\rfloor\leq l < j$}\end{cases}
    \end{equation}
\end{prop}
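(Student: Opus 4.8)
The plan is to prove this by relating the two sequences $(d_j)$ and $(g_j)$ through a telescoping argument, exactly in the spirit of \cite[Proposition~2.4]{zhao2022analysis}, but being careful to track a \emph{shifted} window $g_l,\dotsc,g_j$ rather than the full prefix. First I would establish the bound $d_j < \beta/j$ (and $d_j \le \beta/(j+1)$ under the extra hypothesis $d_0 \le \beta$). This is the standard recursion estimate: from $d_{j+1} \le d_j - g_j^2/\beta \le d_j - d_j^2/\beta$ (using $g_j \ge d_j$) one shows, dividing by $d_jd_{j+1}$ and using monotonicity $d_{j+1}\le d_j$, that $1/d_{j+1} \ge 1/d_j + 1/\beta$, hence $1/d_j \ge 1/d_0 + j/\beta > j/\beta$, giving $d_j < \beta/j$; the case $d_0\le\beta$ adds one more unit to get $1/d_j \ge (j+1)/\beta$.

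The heart of the argument is the bound on $\min\{g_l,\dotsc,g_j\}$. Here I would suppose for contradiction that $g_i \ge m$ for all $i$ with $l \le i \le j$, where $m$ is the claimed bound, and telescope the first inequality over that window: $d_{j+1} \le d_l - \frac{1}{\beta}\sum_{i=l}^{j} g_i^2 \le d_l - \frac{(j-l+1)m^2}{\beta}$. Since $d_{j+1}\ge 0$, this yields $m^2 \le \frac{\beta d_l}{j-l+1}$, i.e. $m \le \sqrt{\frac{\beta d_l}{j-l+1}}$. Now I would substitute the appropriate upper bound on $d_l$. In the regime $l$ small (so that $l$ is below the midpoint of the window), $d_l$ can only be bounded crudely by $d_0$ or, better, one simply uses $d_l \le d_j \cdot(\text{something})$ — actually the cleaner route is: for the ``small $l$'' case use $d_l \le \beta/l$ is too weak at $l=0$, so instead bound $d_l$ by the total: combine with $d_{l} \le d_0$ and note that when $l < \lfloor (j+1)/2\rfloor$ we have $j - l + 1 > j/2$, so $m < \sqrt{2\beta d_0 / j}$; but since we also have the unconditional $d_j < \beta/j$ route, the tighter bookkeeping is to note $\min\{g_l,\dots,g_j\} \le \min\{g_0,\dots,g_j\}$ when $l=0$ and interpolate. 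For the ``large $l$'' case ($l \ge \lfloor(j+1)/2\rfloor$), I plug in $d_l < \beta/l$, well-defined since $l\ge 1$, giving $m < \sqrt{\frac{\beta}{j-l+1}\cdot\frac{\beta}{l}} \le \frac{\beta}{\sqrt{l(j-l+1)}}$, and then I would check that $l(j-l+1) \ge (l+1)(j-l)$ — equivalently $lj - l^2 + l \ge lj - l^2 + j - l$, i.e. $2l \ge j$, which holds precisely because $l \ge \lfloor(j+1)/2\rfloor$. That index manipulation, and getting the floor thresholds and the off-by-one shifts ($l+1$ versus $l+2$, $j$ versus $j+1$) exactly right in both the unconditional and the $d_0\le\beta$ cases, is the main obstacle; everything else is routine.

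Concretely, for the ``small $l$'' case in the unconditional statement I would argue: when $0 \le l < \lfloor\frac{j+1}{2}\rfloor$ we have $j - l \ge \lceil\frac{j}{2}\rceil \ge \frac{j}{2}$, hence $j-l+1 > \frac{j}{2}$, and using $d_l \le d_0$ together with a separate application of the recursion from index $0$ to $l$ (or simply the already-proved $d_j<\beta/j$ applied cleverly) — actually the simplest self-contained path is to note $d_l \le 2\beta/j$ is false in general, so I instead bound $\sum_{i=l}^j g_i^2 \ge \sum_{i=\lfloor(j+1)/2\rfloor}^{j} g_i^2$ is not quite what I want either. The robust approach: since $d_{j+1}\ge 0$, $\sum_{i=0}^{j} g_i^2 \le \beta d_0$ is only available with the extra hypothesis; without it, telescope from $l$ and use $d_l \le \beta/l$ when $l\ge 1$ (giving the $\sqrt{\cdot}$ bound which then must be shown $\le 2\beta/j$ when $l<\lfloor(j+1)/2\rfloor$ — check $\sqrt{\frac{\beta}{j-l}\cdot\frac{\beta}{l}}$: maximized over $l$ near $j/2$, but we are in the small-$l$ range, so it is increasing in $l$ up to the midpoint, bounded by its value there $\approx \frac{2\beta}{j}$), and handle $l=0$ by telescoping from $0$ with $d_0$ replaced via $d_j < \beta/j \Rightarrow$ no, at $l=0$ use $d_{j+1}\le d_0 - \frac{(j+1)m^2}{\beta}$ and $d_{j+1} \ge 0$ gives $m \le \sqrt{\beta d_0/(j+1)}$ only with the hypothesis; otherwise one uses the second inequality $g_0\ge d_0$ differently. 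I would consolidate these cases carefully in the writeup, mirroring the structure of \cite[Proposition~2.4]{zhao2022analysis} and treating $l=0$ as the base case where the original proposition applies verbatim, then the shifted cases via the telescoping-from-$l$ estimate $m \le \sqrt{\beta d_l/(j-l+1)}$ combined with $d_l < \beta/l$. The $d_0 \le \beta$ refinement then follows identically, replacing $d_l < \beta/l$ by $d_l \le \beta/(l+1)$ throughout.
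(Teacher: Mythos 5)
Your first part ($d_j < \beta/j$, and $d_j \le \beta/(j+1)$ when $d_0\le\beta$) matches the paper's argument, and your treatment of the large-$l$ regime is sound: telescoping over $[l,j]$ gives $m \le \sqrt{\beta d_l/(j-l+1)}$, and with $d_l < \beta/l$ and the identity $l(j-l+1)-(l+1)(j-l) = 2l-j \ge 0$ you recover (in fact slightly improve) the stated bound. The genuine gap is in the small-$l$ regime $1 \le l < \lfloor\frac{j+1}{2}\rfloor$. Your consolidated plan there --- telescope from $l$ and substitute $d_l < \beta/l$ --- yields $m < \beta/\sqrt{l(j-l+1)}$, and this does \emph{not} imply the claimed $2\beta/j$: at $l=1$ it is about $\beta/\sqrt{j}$, which exceeds $2\beta/j$ for all $j\ge 5$. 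The monotonicity claim you use to dismiss the problem (``increasing in $l$ up to the midpoint, bounded by its value there'') is reversed: $l(j-l+1)$ is increasing in $l$ up to the midpoint, so $\beta/\sqrt{l(j-l+1)}$ is \emph{decreasing}, and it is maximized at the left end of the small-$l$ range, where it is far too large. Treating only $l=0$ by citing the original Zhao--Freund proposition does not cover $1\le l<\lfloor\frac{j+1}{2}\rfloor$.

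The missing idea --- which you brushed against and discarded --- is to throw away the left portion of the window before telescoping: for any $i$ with $l+1\le i\le j$ one has $\min\{g_l,\dotsc,g_j\}\le\min\{g_i,\dotsc,g_j\}$, and bounding the minimum by the average over $[i,j]$ plus telescoping gives $\min\{g_l^2,\dotsc,g_j^2\}\le \frac{\beta}{j-i+1}\,d_i < \frac{\beta}{j-i+1}\cdot\frac{\beta}{i}$, where now $d_i<\beta/i$ is always usable because $i\ge l+1\ge 1$. Optimizing over $i\in\{l+1,\dotsc,j\}$ (i.e., maximizing $i(j-i+1)$) selects $i^*=\lfloor\frac{j+1}{2}\rfloor$ when $i^*\ge l+1$, which yields the $2\beta/j$ bound uniformly over all $0\le l<\lfloor\frac{j+1}{2}\rfloor$, and selects $i=l+1$ otherwise, yielding $\beta/\sqrt{(l+1)(j-l)}$. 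This is exactly the paper's proof; with that one modification your argument goes through, including the $d_0\le\beta$ refinement via $d_i\le\beta/(i+1)$. (You should also dispose of the degenerate case $d_j=0$ separately before dividing the recurrence by $d_jd_{j-1}$.)
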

\begin{proof}
First, suppose that $d_j = 0$ for some $j$. In this case $g_k=0$ for all $k\geq j$ and $d_k = 0$ for all $k\geq j$, so the bounds hold trivially. As such, assume that $j>l\geq 0$ is chosen so that $d_{j}>0$, and hence $d_k>0$ for $0\leq k\leq j$. 

The first recurrence implies that  $d_{j}\leq d_{j-1}$ for all $j\geq 1$.  
Dividing both sides of the first recurrence by the positive quantity $d_{j}d_{j-1}$ gives 
    \[ d_{j}^{-1} \geq d_{j-1}^{-1} + \beta^{-1}\frac{g_{j-1}}{d_{j-1}}\frac{g_{j-1}}{d_{j}} \geq d_{j-1}^{-1} + \beta^{-1} \frac{g_{j-1}}{d_{j}} \geq d_{j-1}^{-1} + \beta^{-1}\]
    where the inequalities follow from the assumption that $g_{j-1} \geq d_{j-1}$ and the fact that $d_{j}\leq d_{j-1}$ for all $j\geq 0$.

It follows that $d_{j}^{-1} \geq d_l^{-1} + (j-l)\beta^{-1}$ and so $d_j\leq\frac{\beta}{j-l+\beta/d_l}$. Since this inequality is independent of $l$, we can minimize over $l$ to obtain the best bound. By setting $l=0$ and using $d_0> 0$, we obtain $d_j < \beta/j$ for all $j\geq 1$. Furthermore, if $d_0\leq \beta$, we have that $d_j \leq \beta/(j+\beta/d_0) \leq \beta/(j+1)$.

Now we establish the bound on $\min\{g_l,\ldots,g_j\}$. Let $1\leq l+1\leq i\leq j$.
Since the minimum is bounded above by the average, we have that 
\[ \min\{g_l^2,\ldots,g_j^2\} \leq \min\{g_{i}^2,\dots,g_j^2\} \leq  \frac{1}{j-i+1}\sum_{k=i}^{j}g_k^2 \leq \frac{\beta}{j-i+1}\sum_{k=i}^{j}(d_{k}-d_{k+1})= \frac{\beta}{j-i+1}(d_{i} - d_{j+1}).\]
Using the bound $d_{i} < \beta/i$ (valid since $i\geq 1$), we have that 
\begin{equation}
\label{eq:gb}\min\{g_l^2,\ldots,g_j^2\} < \frac{\beta}{j-i+1}\cdot\frac{\beta}{i}.\end{equation}
We are free to choose $l+1\leq i \leq j$ to minimize the bound on the right hand side of~\eqref{eq:gb}. Let $i^* = \lfloor \frac{j+1}{2}\rfloor$. A minimum occurs at $i^*$ if $i^*\geq l+1$, otherwise it occurs at $i=l+1$. In the first case, we obtain
\[ \min\{g_l^2,\ldots,g_j^2\} \leq \frac{4\beta^2}{j^2}.\]
In the second case we obtain
\[ \min\{g_l^2,\ldots,g_j^2\} \leq \frac{\beta^2}{(l+1)(j-l)}.\]
Taking the square root of both sides gives the desired bound.

If, in addition $d_0\leq \beta$, we can use the bound $d_i \leq \beta/(i+1)$ which gives
\begin{equation}
\label{eq:gb2}\min\{g_l^2,\ldots,g_j^2\} \leq \frac{\beta}{j-i+1}\cdot \frac{\beta}{i+1}.
\end{equation}
Again we want to minimize the right hand size of~\eqref{eq:gb2} over $l+1\leq i\leq j$. Let $i^* = \lfloor j/2+1\rfloor$. A minimum occurs at $i=i^*$ if $i^*\geq l+1$, otherwise a minimum occurs at $i=l+1$. Substituting these into~\eqref{eq:gb} and taking the square roots of both sides completes the argument.
\end{proof}

The following lemma shows how the optimality gap and the approximate duality gap reduce for iterates in the $s$-subsequence. The implication of this result is that if the $s$-subsequence is long enough, both of these quantities will eventually become small.

\begin{lemma}
\label{lemma:boundNs}
Let $(x^t)_{t\geq 0}$ and $(h^t)_{t\geq 0}$
be sequences of feasible points for~\eqref{eqn:SC-gendef} generated by Algorithm~\ref{algo:AGFW}. 
\begin{itemize}
    \item If $j \geq 1$ then $\Delta_{s_{j+1}} \leq \frac{12(2C+1)^2(\theta+R_g)^2}{j+1}$.
    \item If $0\leq l < \lfloor j/2+1\rfloor$ then $\displaystyle{\min_{k\in \{l+1,\dotsc, j+1\}}}G_{s_k}^a \leq \frac{24(2C+1)(\theta+R_g)^2}{j+1}$.
    \item If $l\geq  \lfloor j/2+1\rfloor$ then $\displaystyle{\min_{k\in \{l+1,\dotsc, j+1\}}}G_{s_k}^a \leq \frac{12(2C+1)(\theta+R_g)^2}{\sqrt{(l+2)(j-l)}}$.
    \end{itemize}

 \end{lemma}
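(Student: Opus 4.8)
The plan is to apply Proposition~\ref{prop:ProofGkConv} to suitably rescaled versions of the sequences $(\Delta_{s_i})_{i\geq 1}$ and $(G_{s_i}^a)_{i\geq 1}$ indexed along the $s$-subsequence. Concretely, for $i\geq 0$ I would set $d_i := \Delta_{s_{i+1}}$ and $g_i := (2C+1)\,G_{s_{i+1}}^a$, and take $\beta := 12(2C+1)^2(\theta+R_g)^2$. Both sequences are nonnegative, and I claim they satisfy the two hypotheses of Proposition~\ref{prop:ProofGkConv}. The descent hypothesis $d_{i+1}\leq d_i - g_i^2/\beta$ is exactly the $s$-subsequence inequality $\Delta_{s_{i+1}}-\Delta_{s_{i+2}}\geq (G_{s_{i+1}}^a)^2/(12(\theta+R_g)^2)$ of Lemma~\ref{lem:sub-ineq}, since $g_i^2/\beta=(2C+1)^2(G_{s_{i+1}}^a)^2/\beta=(G_{s_{i+1}}^a)^2/(12(\theta+R_g)^2)$. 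The lower-bound hypothesis $g_i\geq d_i$ is exactly the defining property of the $s$-subsequence, $G_{s_{i+1}}^a\geq \Delta_{s_{i+1}}/(2C+1)$.

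Next I would verify the extra hypothesis $d_0\leq\beta$, which is what upgrades the conclusions of Proposition~\ref{prop:ProofGkConv} to the sharper form with $j+1$ (rather than $j$) in the denominators. Since $s_1$ lies in the $s$-subsequence, $\Delta_{s_1}\leq (2C+1)G_{s_1}^a\leq (2C+1)(\theta+R_g)$, combining $G_{s_1}^a\geq \Delta_{s_1}/(2C+1)$ with $G_{s_1}^a\leq\theta+R_g$. As $\theta\geq 1$ by~\eqref{eqn:SCProp5}, $R_g\geq 0$, and $C\geq 0$, we have $12(2C+1)(\theta+R_g)\geq 1$, hence $d_0=\Delta_{s_1}\leq 12(2C+1)^2(\theta+R_g)^2=\beta$.

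With the hypotheses in place, I would invoke the ``in addition'' part of Proposition~\ref{prop:ProofGkConv}. Its bound $d_j\leq\beta/(j+1)$ reads $\Delta_{s_{j+1}}\leq 12(2C+1)^2(\theta+R_g)^2/(j+1)$, giving the first claim. For the other two, observe that $\min\{g_l,\dots,g_j\}=(2C+1)\min_{k\in\{l+1,\dots,j+1\}}G_{s_k}^a$, so dividing the two branches of the proposition's minimum bound by $(2C+1)$ gives $\min_k G_{s_k}^a\leq 2\beta/((2C+1)(j+1))=24(2C+1)(\theta+R_g)^2/(j+1)$ when $\lfloor j/2+1\rfloor>l\geq 0$, and $\min_k G_{s_k}^a\leq \beta/((2C+1)\sqrt{(l+2)(j-l)})=12(2C+1)(\theta+R_g)^2/\sqrt{(l+2)(j-l)}$ when $\lfloor j/2+1\rfloor\leq l<j$. (The third case tacitly assumes $l<j$, which is needed for the right-hand side to be well defined.)

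This lemma is essentially a translation of Proposition~\ref{prop:ProofGkConv}, so I do not expect a genuine obstacle; the only points requiring care are choosing the rescaling factor $(2C+1)$ — it is forced, being exactly what converts the $s$-subsequence inequality $G_t^a\geq\Delta_t/(2C+1)$ into the hypothesis $g_i\geq d_i$ — verifying $d_0\leq\beta$ so that the stated constants come out exactly, and keeping the index shift between the $1$-indexed $s$-subsequence and the $0$-indexed sequences of the proposition consistent throughout.
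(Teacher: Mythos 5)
Your proof is correct and follows essentially the same route as the paper's: both reduce the lemma to Proposition~\ref{prop:ProofGkConv} via the $s$-subsequence inequalities of Lemma~\ref{lem:sub-ineq}, with the only cosmetic difference being that you absorb the factor $(2C+1)$ into $g_i$ (taking $\beta = 12(2C+1)^2(\theta+R_g)^2$) whereas the paper divides it out of $d_i$ (taking $\beta = 12(2C+1)(\theta+R_g)^2$); the resulting constants agree. Your verification of $d_0\leq\beta$ and the index bookkeeping are also sound.
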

 \begin{proof}
Consider the sequences $d_{j-1} = \Delta_{s_{j}}/(2C+1)$ and $g_{j-1} = G_{s_{j}}^a$ for $j=1,2,\ldots$. 
Note that $d_j \geq 0$ and $g_j \geq 0$ for all $j$. 
From Lemma~\ref{lem:sub-ineq}
we have that
 \begin{equation}
     d_j 
     \leq d_{j-1} - \frac{g_{j-1}^2}{12(2C+1)(\theta+R_g)^2}\quad\textup{for all $j\geq 1$}.
     \end{equation}
Combining this with the inequality $g_{j-1} = G_{s_j}^a \geq \Delta_{s_j}/(2C+1) = d_{j-1}$ for all $j=1,2,\ldots$ (from the definition of the $s$-subsequence), 
 we have that 
 \begin{equation*}
     d_{j+1} \leq d_j - \frac{g_j^2}{\beta}\quad\textup{and}\quad     g_j  \geq d_j\quad\textup{for all $j\geq 0$},
 \end{equation*}
 where $\beta = 12(2C+1)(\theta+R_g)^2$.
 Furthermore, $d_0 \leq g_0 = G_{s_1}^a \leq \theta + R_g \leq \beta$ (since $12(2C+1)(\theta+R_g)^2 \geq 1$). 
 The result then follows from Proposition~\ref{prop:ProofGkConv}.
%
\end{proof}

\subsection{Analysis of Algorithm~\ref{algo:AGFW}}\label{sec:ConvAGFW}
In this section, we provide a bound on the number of iterations of Algorithm~\ref{algo:AGFW} required to achieve the stopping criterion and return an $\epsilon$-optimal solution (i) when $\delta_t$ is chosen according to scheduled strategy~\ref{Strat1} (see Lemma~\ref{lemma:ConvDet-OL}), and (ii) when $\delta_t$ is chosen according to adaptive strategy~\ref{Strat2} (see Lemma~\ref{lemma:ConvDet-CL}). 

\begin{lemma}\label{lemma:ConvDet-OL}
    Let $0<\epsilon\leq\theta+R_g$ and let $\delta_t$ be a non-negative sequence that converges to $0$. Let $K_q$ be the smallest positive integer such that $\delta_t \leq \epsilon/2$ for all $t\geq K_q$. Let 
    \[ K_r = \lfloor 10.6\Delta_0\rfloor\quad\textup{and}\quad K_s = \left\lceil \frac{48(\theta+R_g)^2}{\epsilon}\right\rceil.\]
    Then, for Algorithm~\ref{algo:AGFW} (with~\eqref{eqn:SC-gendef} as input), there exists some $K \leq K_r+2K_s+2K_q$ such that $\Delta_K \leq \epsilon$,
    $G_K^a\leq \epsilon$ and $\delta_K \leq \epsilon$. 
\end{lemma}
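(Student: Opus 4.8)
The plan is to partition the iterates using Definition~\ref{list:SeqIterates} with a suitable choice of the parameter $C$, and argue that if the algorithm runs for more than $K_r + 2K_s + 2K_q$ iterations without stopping, we reach a contradiction. Since $\delta_t \to 0$ and scheduled strategy~\ref{Strat1} is used with the additive slack $\epsilon/2$, a natural choice is $C$ large enough that the $q$-subsequence inequality~\eqref{eq:q-ineq}, namely $G_{q_i}^a \leq \delta_{q_i}/(2C)$, becomes useful once $t \geq K_q$; in fact taking $C=1$ (or any fixed constant) and combining with $\delta_{q_i} \leq \epsilon/2$ once $q_i \geq K_q$ already gives $G_{q_i}^a \leq \epsilon/4 \leq \epsilon$ there, and since for such iterates $\Delta_{q_i} \leq G_{q_i}^a + \delta_{q_i} \leq 3\epsilon/4 \le \epsilon$ we also control the optimality gap. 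So the heart of the matter is: in any window of iterates, the $r$-subsequence contributes at most $K_r$ indices (Lemma~\ref{lemma:boundNr}), and the $s$-subsequence can contain at most about $K_s$ indices before $G^a$ and $\Delta$ both drop below $\epsilon$ (Lemma~\ref{lemma:boundNs}).

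The main steps, in order, are as follows. First, fix $C$ (a constant, say $C=1$) and set $N = K_r + 2K_s + 2K_q$; suppose for contradiction that Algorithm~\ref{algo:AGFW} does not stop at any iteration $t \le N$. Second, decompose $\{0,1,\dots,N-1\}$ (or $\{K_q, \dots, N\}$) into the $r$-, $s$-, and $q$-subsequences and use $N_r + N_s + N_q = N$. Third, invoke Lemma~\ref{lemma:boundNr} to get $N_r \le K_r$. Fourth — the crux — restrict attention to $s$-indices at least $K_q$; write this sub-subsequence as $s_{l+1} < \dots < s_{j+1}$ where $l \approx K_q$ and $j+1 = N_s$, and apply Lemma~\ref{lemma:boundNs} (using the first or third bullet as appropriate for the relative size of $l$ versus $j$) to conclude that if $N_s$ exceeds roughly $2K_q + K_s$ then some $s$-iterate $s_k \ge K_q$ has $G_{s_k}^a \le \epsilon$; combined with $\Delta_{s_k} \le G_{s_k} \le G_{s_k}^a + \delta_{s_k} \le \epsilon + \epsilon/2$ (need to track constants so this lands at $\le \epsilon$, perhaps by choosing the slack in $K_s$ to absorb the $\delta$ term) and $\delta_{s_k} \le \epsilon/2 \le \epsilon$, this $s_k$ satisfies the stopping criterion, contradiction. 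Fifth, similarly argue $N_q \le K_q + (\text{a few})$: once $q$-indices exceed $K_q$, every such iterate already satisfies $G^a \le \epsilon/4$, $\delta \le \epsilon/2$, hence triggers the stopping rule, so there can be at most $K_q$ of them (plus perhaps one) before the algorithm halts. Sixth, add up: $N = N_r + N_s + N_q \le K_r + (2K_q + K_s) + K_q$, contradicting $N = K_r + 2K_s + 2K_q$ once we check $K_s \ge $ the slack we needed and $2K_q + K_s \le 2K_s + K_q$, i.e. $K_q \le K_s$ — this may require a mild assumption or a re-examination of the constants, which I'd handle by being generous in defining $K_s$.

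The main obstacle I anticipate is bookkeeping the constants and the floor/ceiling and off-by-one effects so that the stopping criterion ``$G_t^a \le \epsilon$ and $\delta_t \le 3\epsilon/2$'' is genuinely met — in particular, along the $s$-subsequence Lemma~\ref{lemma:boundNs} gives a bound on $\min_k G_{s_k}^a$ over a window, and I must choose the window length (hence $K_s$) so that this minimum is $\le \epsilon$ rather than, say, $2\epsilon$; the factor $48 = 4 \cdot 12 \cdot (2C+1)$ with $C=1$ in the definition of $K_s$ suggests exactly this calibration, so the choice $C=1$ is likely the intended one. A secondary subtlety is that Lemma~\ref{lemma:boundNs}'s three cases depend on whether the left endpoint $l$ of the window is below or above $\lfloor j/2 + 1\rfloor$; I would split into these cases and check that in each, a window of length $\Theta(K_s)$ starting after index $K_q$ forces $\min G_{s_k}^a \le \epsilon$. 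Finally I must confirm that the iterate $K$ witnessing all three inequalities can be taken to be this same $s_k$ (or $q_k$), and that $K \le N = K_r + 2K_s + 2K_q$, which follows since $s_k \le N$ and $q_k \le N$ by construction.
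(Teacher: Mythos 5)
Your plan is essentially the paper's own proof: partition the first $N=K_r+2K_s+2K_q$ iterates into the $r$-, $s$-, $q$-subsequences of Definition~\ref{list:SeqIterates}, bound $N_r$ by Lemma~\ref{lemma:boundNr}, and then split into the case where the $s$-subsequence is long (apply Lemma~\ref{lemma:boundNs} to a window of length $K_s$ whose indices all exceed $K_q$, so that $\delta\leq\epsilon/2$ there) and the case where the $q$-subsequence is long (where~\eqref{eq:q-ineq} plus $\delta_{q_i}\leq\epsilon/2$ forces $G^a_{q_i}$ small). The structure, the role of each lemma, and the final assembly via Proposition~\ref{prop:OutputAGFW}-style bounds $\Delta\leq G^a+\delta$ all match.

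The one concrete problem is your choice $C=1$. The constant $48$ in $K_s$ is calibrated to $C=1/2$: Lemma~\ref{lemma:boundNs} gives $\min G^a_{s_k}\leq 12(2C+1)(\theta+R_g)^2/\sqrt{(l+2)(j-l)}\leq 24(\theta+R_g)^2/K_s\leq\epsilon/2$ only when $2C+1=2$, and then $\Delta\leq G^a+\delta\leq\epsilon/2+\epsilon/2=\epsilon$. With $C=1$ and the \emph{fixed} $K_s=\lceil 48(\theta+R_g)^2/\epsilon\rceil$ of the statement, you only get $36(\theta+R_g)^2/K_s\leq 3\epsilon/4$, hence $\Delta\leq 5\epsilon/4$, which misses the target; you cannot "be generous in defining $K_s$" because $K_s$ is prescribed. (Your factorization "$48=4\cdot 12\cdot(2C+1)$ with $C=1$" is also arithmetically off.) Relatedly, your step-six accounting needs $K_q\leq K_s$, which is not guaranteed; the paper sidesteps this by casing directly on whether $N_s\geq 2K_s+K_q+1$ and taking the $s$-window to be $\{K_s+K_q+1,\dots,2K_s+K_q+1\}$ (so its left endpoint automatically exceeds $K_q$ and its length is $K_s$), rather than a window starting right after $K_q$. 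With $C=1/2$ and that window choice, everything you outlined closes.
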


\begin{proof}
Let $N = K_r+2K_s+2K_q$ and let the first $N$ iterations of Algorithm~\ref{algo:AGFW} be partitioned into the $r$-, $s$-, and $q$-subsequences according to Definition~\ref{list:SeqIterates} with $C = 1/2$. Then $N = N_r+N_s+N_q$.

From Lemma~\ref{lemma:boundNr}, we know that $N_r \leq K_r$, and so it follows that $N_s +N_q \geq (2K_s+K_q)+K_q$. We now consider two cases: (i) $N_s \geq  2K_s+K_q+1$, and (ii) $N_s \leq 2K_s+K_q$. In the first case, the $s$-subseqence will be long enough to establish convergence. In the second case, the $q$-subsequence will be long enough to ensure convergence. 

\textbf{Case 1: $N_s \geq   2K_s+K_q+1$.} 
In this case we will show that it suffices to choose $K = s_{k^*}$ where 
\[ k^* = \argmin_{i\in \{K_s+K_q+1,\ldots,2K_s+K_q+1\}} G_{s_i}^a.\] Since $N_s\geq 2K_s+K_q+1$ it follows that $K= s_{k^*} \leq s_{2K_s+K_q+1} \leq s_{N_s} \leq N$, as required. We now establish bounds on $\delta_K, G_K^a$, and $\Delta_K$. 
\begin{itemize}
\item Since $k^*\geq K_q$, it follows that $s_{k^*} \geq s_{K_q} \geq K_q$, and so that $\delta_{s_k^*}\leq \epsilon/2$ (by the definition of $K_q$). 
\item  By the definition of $k^*$, from Lemma~\ref{lemma:boundNs} (with $l=K_s+K_q$ and $j=2K_s+K_q$) we have that $l\geq \lfloor j/2+1\rfloor$ and so 
\[ G_{s_{k^*}}^a \leq \frac{24(\theta+R_g)^2}{\sqrt{(K_s+K_q+2)K_s}}\leq \epsilon/2.\]
\item From Proposition~\ref{prop:OutputAGFW} we have that $\Delta_{s_{k^*}} \leq G_{s_{k^*}}^a + \delta_{s_{k^*}}\leq \epsilon$.
\end{itemize}




\textbf{Case 2: $N_s \leq  2K_s+K_q$.} In this case we will show that it suffices to  choose $K = q_{N_q}\leq N$. 

Since $N_s \leq  2K_s+K_q$, we have that $N_q \geq 2K_s+2K_q - N_s\geq K_q$.
Consider the sequence $q_k$ for $k=1,2,\ldots,N_q$. Since $C = 1/2$, from Lemma~\ref{lem:sub-ineq} we have that 
    $G_{q_k}^a \leq \delta_{q_k}$ for each $k$.
    
    Since $K = q_{N_q} \geq N_q \geq K_q$, we have $\delta_{K} = \delta_{q_{N_q}}\leq \epsilon/2$ (from the definition of $K_q$).
    We also have $G_{K}^a = G_{q_{N_q}}^a \leq \delta_{q_{N_q}} \leq \epsilon/2$. 
    Finally, from Proposition~\ref{prop:OutputAGFW} we have that $\Delta_{K} \leq G_{K}^a +\delta_{K} \leq \epsilon$, completing the argument. 
\end{proof}

The following result (Lemma~\ref{lemma:ConvDet-CL}) provides an upper bound on the number of iterations of Algorithm~\ref{algo:AGFW} until it stops when the value of $\delta_t$ is set according to adaptive strategy~\ref{Strat2}. The main difference between the proofs of Lemma~\ref{lemma:ConvDet-OL} and Lemma~\ref{lemma:ConvDet-CL} is in the analysis of the $q$-subsequence. In Lemma~\ref{lemma:ConvDet-CL}, since $\delta_t$ is chosen in terms of the smallest value of the approximate duality gap we have encountered so far, we can ensure that the approximate duality gap reduces geometrically along the $q$-subsequence. 

\begin{lemma}\label{lemma:ConvDet-CL}
    Let $0<\epsilon\leq \theta+R_g$,  and let  $\delta_t$ be a non-negative sequence such that $\delta_t \leq \epsilon/2 + \min_{\tau<t}G_\tau^a$ for all $t$. Let 
    \[ K_r = \left\lfloor 10.6 \Delta_0\right\rfloor\quad\textup{and}\quad
K_s = \left\lceil \frac{72(\theta+R_g)^2}{\epsilon}\right\rceil\quad\textup{and}\quad K_q = 2+\left\lceil\log_2\left(\frac{\theta+R_g}{\epsilon}\right)\right\rceil.\]
Then, for Algorithm~\ref{algo:AGFW} (with~\eqref{eqn:SC-gendef} as input), there exists some $K \leq K_r+2K_s+K_q$ such that $\Delta_K \leq 5\epsilon/2$,
    $G_K^a\leq \epsilon$ and $\delta_K \leq 3\epsilon/2$. 
\end{lemma}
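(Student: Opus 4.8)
The plan is to mirror the structure of the proof of Lemma~\ref{lemma:ConvDet-OL}, partitioning the first $N = K_r + 2K_s + K_q$ iterations into the $r$-, $s$-, and $q$-subsequences of Definition~\ref{list:SeqIterates} with $C = 1/2$, so that $N = N_r + N_s + N_q$. By Lemma~\ref{lemma:boundNr} we have $N_r \leq K_r$, hence $N_s + N_q \geq 2K_s + K_q$. As in Lemma~\ref{lemma:ConvDet-OL}, split into two cases according to whether $N_s \geq 2K_s + 1$ or $N_s \leq 2K_s$; in the first case the $s$-subsequence carries the argument, and in the second case $N_q \geq K_q$ and the $q$-subsequence carries it. The desired conclusion in each case is an index $K$ with $G_K^a \leq \epsilon$ (which, together with $\delta_K \leq 3\epsilon/2$, triggers the stopping test in Step~7) and $\Delta_K \leq 5\epsilon/2$; the bound $\Delta_K \leq G_K^a + \delta_K \leq \epsilon + 3\epsilon/2$ comes directly from Proposition~\ref{prop:OutputAGFW} once $G_K^a \leq \epsilon \leq \theta + R_g$ is established.

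For Case~1 ($N_s \geq 2K_s + 1$), I would take $K = s_{k^*}$ with $k^* = \argmin_{i \in \{K_s+1,\dots,2K_s+1\}} G_{s_i}^a$ and apply the third bullet of Lemma~\ref{lemma:boundNs} with $l = K_s$ and $j = 2K_s$ (so $l \geq \lfloor j/2 + 1\rfloor$), obtaining
\[
G_{s_{k^*}}^a \leq \frac{12(2C+1)(\theta+R_g)^2}{\sqrt{(K_s+2)K_s}} \leq \frac{24(\theta+R_g)^2}{K_s} \leq \epsilon,
\]
using $2C+1 = 2$ and $K_s \geq 72(\theta+R_g)^2/\epsilon \geq 24(\theta+R_g)^2/\epsilon$. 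Then $\delta_K \leq \epsilon/2 + \min_{\tau < K} G_\tau^a \leq \epsilon/2 + G_{s_{k^*}}^a \leq 3\epsilon/2$ (note $s_{k^*} - 1 < s_{k^*} = K$, and since $k^* \geq K_s + 1 \geq 2$ the min over $\tau < K$ includes an earlier $s$-iterate, or one can simply bound $\min_{\tau<K} G_\tau^a$ by the value at $s_{k^*-1}$, but cleanest is to observe $\min_{\tau<K}G_\tau^a \le G_{s_{k^*}}^a$ does \emph{not} hold directly — instead bound $\delta_K \le \epsilon/2 + \min_{\tau<K}G_\tau^a$ and note the $q$-subsequence emptiness or the definition forces $\min_{\tau<K}G_\tau^a \le \epsilon$; more carefully, since $k^*$ minimizes $G_{s_i}^a$ over a window whose left endpoint $K_s+1 \ge 2$, we have $\min_{\tau < s_{k^*}} G_\tau^a \le G_{s_{k^*-1}}^a$, and by the same Lemma~\ref{lemma:boundNs} bullet applied with $j = 2K_s$, $l = K_s$ this window-minimum is also $\le \epsilon$). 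I will state this step carefully so the window of indices over which the minimum is taken is shifted to guarantee $\delta_K \le 3\epsilon/2$. Finally $\Delta_K \leq 5\epsilon/2$ from Proposition~\ref{prop:OutputAGFW}.

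For Case~2 ($N_s \leq 2K_s$, hence $N_q \geq K_q$) I would argue that the approximate duality gap decays geometrically along the $q$-subsequence. By Lemma~\ref{lem:sub-ineq} with $C = 1/2$, $G_{q_i}^a \leq \delta_{q_i}$ for every $i$; and by adaptive strategy~\ref{Strat2}, $\delta_{q_i} \leq \epsilon/2 + \min_{\tau < q_i} G_\tau^a \leq \epsilon/2 + G_{q_{i-1}}^a$ for $i \geq 2$. Chaining these gives $G_{q_i}^a \leq \epsilon/2 + G_{q_{i-1}}^a$ — but this is additive, not contracting, so instead I would use that $q_{i-1} < q_i$ forces the min to be attained possibly much earlier; the correct geometric estimate is $G_{q_{i}}^a \le \delta_{q_i} \le \epsilon/2 + \min_{\tau<q_i}G_\tau^a$, and separately, since each $q_i$-iterate satisfies condition~\ref{Cond2} and $G_{q_i}^a < \Delta_{q_i}/2$, we get from $\Delta_{q_i} \le G_{q_i}^a + \delta_{q_i}$ that... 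In fact the clean route (as the lemma's preamble hints) is: along consecutive $q$-iterates, $\min_{\tau \le q_i} G_\tau^a$ at least halves every step once it drops below $\epsilon$, giving after $K_q = 2 + \lceil \log_2((\theta+R_g)/\epsilon)\rceil$ steps that $\min_{k \le N_q} G_{q_k}^a \leq \epsilon$; taking $K$ to be the minimizing $q_k$ then yields $G_K^a \leq \epsilon$, $\delta_K \leq \epsilon/2 + G_K^a \le 3\epsilon/2$ (using that $K$ achieves the running minimum), and $\Delta_K \leq 5\epsilon/2$ by Proposition~\ref{prop:OutputAGFW}.

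The main obstacle, and the part requiring the most care, is the geometric-decay argument along the $q$-subsequence in Case~2: one must correctly combine $G_{q_i}^a \leq \delta_{q_i}$, the adaptive rule $\delta_{q_i} \leq \epsilon/2 + \min_{\tau<q_i}G_\tau^a$, and the monotonicity of the running minimum of $G_\tau^a$ to show the running minimum contracts by a factor of roughly $1/2$ each $q$-step while it exceeds $\epsilon$, starting from $G_{q_1}^a \leq \theta + R_g$; this is where the $\log_2((\theta+R_g)/\epsilon)$ term and the additive constant $2$ in $K_q$ come from, and it is exactly the place where the proof departs from that of Lemma~\ref{lemma:ConvDet-OL}. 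A secondary point of care is ensuring in both cases that the index $K$ selected is the one realizing a running minimum of $G_\tau^a$, so that the adaptive bound $\delta_K \leq \epsilon/2 + \min_{\tau<K}G_\tau^a$ combined with $G_K^a$ being (essentially) that minimum yields $\delta_K \leq 3\epsilon/2$ rather than something larger.
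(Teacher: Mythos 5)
There is a genuine gap in your Case~2, and it traces back to your choice of the partition parameter $C$. You take $C=1/2$, so Lemma~\ref{lem:sub-ineq} gives only $G_{q_i}^a \leq \frac{1}{2C}\delta_{q_i} = \delta_{q_i}$ along the $q$-subsequence. Combined with the adaptive rule $\delta_{q_i}\leq \epsilon/2+\min_{\tau<q_i}G_\tau^a \leq \epsilon/2 + G_{q_{i-1}}^a$, this yields $G_{q_i}^a\leq \epsilon/2+G_{q_{i-1}}^a$, which, as you yourself observe, is additive and does not contract: the running minimum of $G_\tau^a$ need not decrease at all under this recursion. You then assert that "the running minimum at least halves every step," but no halving follows from the inequalities you have available with $C=1/2$; this is precisely the step your plan leaves unproved, and it cannot be repaired without changing $C$.

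The fix is to run the entire partition with $C=1$. Then the $q$-subsequence inequality becomes $G_{q_k}^a\leq \delta_{q_k}/2$, so that $G_{q_k}^a\leq \epsilon/4+\tfrac12 G_{q_{k-1}}^a$, and unrolling from $G_{q_1}^a\leq\theta+R_g$ gives the genuine geometric decay $G_{q_k}^a\leq \epsilon/2+2^{-k+1}(\theta+R_g)$; after $K_q=2+\lceil\log_2((\theta+R_g)/\epsilon)\rceil$ $q$-steps one gets $G^a\leq 3\epsilon/4$ and $\delta\leq 3\epsilon/2$ at the final $q$-iterate. The price of $C=1$ is the factor $2C+1=3$ in the $s$-subsequence bounds of Lemma~\ref{lemma:boundNs}, which is exactly why $K_s$ is $\lceil 72(\theta+R_g)^2/\epsilon\rceil$ here rather than the $48$ of Lemma~\ref{lemma:ConvDet-OL}; your Case~1 arithmetic uses $2C+1=2$ and so would also need to be redone (the paper picks the window $\{K_s+1,\ldots,2K_s\}$, bounds $G_{s_{k^*}}^a\leq 36(\theta+R_g)^2/K_s\leq\epsilon/2$, and bounds $\delta_{s_{k^*}}$ via the first-bullet of Lemma~\ref{lemma:boundNs} applied to the window $\{1,\ldots,K_s\}$, avoiding the circularity you wrestle with). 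Your overall architecture — partition, bound $N_r$, two cases with the $s$- and $q$-subsequences respectively carrying the argument, and Proposition~\ref{prop:OutputAGFW} closing each case — is the right one; only the choice of $C$ and the resulting quantitative bookkeeping need to change.
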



\begin{proof}
Let $N = K_r+2K_s+2K_q$ and let the first $N$ iterations of Algorithm~\ref{algo:AGFW} be partitioned into the $r$-, $s$-, and $q$-subsequences according to Definition~\ref{list:SeqIterates} with $C = 1$. Then $N = N_r+N_s+N_q$.
We know that $N_r \leq K_r$ (from Lemma~\ref{lemma:boundNr}). It follows that $N_s +N_q \geq 2K_s+2K_q$.

We now consider two cases: (i) $N_q \leq K_q$, (ii) $N_q \geq K_q+1$.

\textbf{Case 1: $N_q \leq K_q$.} 
In this case we will show that it suffices to choose $K = s_{k^*}$ where 
\[ k^* = \argmin_{i\in \{K_s+1,\ldots,2K_s\}} G_{s_i}^a.\]
Since $N_q \leq K_q$, we have $N_s \geq 2K_s+K_q-N_q \geq 2K_s \geq k^*$. It follows that $K = s_{k^*} \leq s_{N_s} \leq N$, as required. We now establish bounds on $\delta_K, G_K^a$ and $\Delta_K$.
\begin{itemize}
\item Since $k^*\geq K_s+1$ we have that
\[\delta_{s_{k^*}} \leq \frac{\epsilon}{2}+\min_{\tau<s_{k^*}}G_{\tau}^a \leq \frac{\epsilon}{2}+\min_{1\leq j\leq k^*-1}G_{s_j}^a\leq  \frac{\epsilon}{2}+\min_{1\leq j\leq K_s}G_{s_j}^a \underset{(i)}{\leq} \frac{\epsilon}{2}+\frac{72(\theta+R_g)^2}{K_s}\underset{(ii)}{\leq} \frac{3\epsilon}{2},\]
where (i) follows from Lemma~\ref{lemma:boundNs} with $l=0$ and $j=K_s-1$ and (ii) follows from the definition of $K_s$. 
\item From Lemma~\ref{lemma:boundNs} with $l=K_s$ and $j = 2K_s-1$, noting that $l=\lfloor j/2+1\rfloor$, we have that 
\[ G_{s_{k^*}}^a \leq \frac{36(\theta+R_g)^2}{\sqrt{(K_s+2)(K_s-1)}}\leq \frac{36(\theta+R_g)^2}{K_s} \leq \frac{\epsilon}{2}.\]
\item 
From Proposition~\ref{prop:OutputAGFW} we have that $\Delta_{s_{k^*}} \leq G_{s_{k^*}}^a + \delta_{s_{k^*}}\leq 5\epsilon/2$.
\end{itemize} 

\textbf{Case 2: $N_q \geq K_q+1$.} In this case, we will show that it suffices to choose $K=q_{N_q}\leq N$.

Consider the sequence $q_k$ for $k=1,2,\ldots,N_q$. Since $C=1$, from Lemma~\ref{lem:sub-ineq} we have that $G_{q_k}^a \leq \delta_{q_k}/2$
    for $k=1,2,\ldots,N_q$. From our choice of $\delta_t$, we know that for $k\geq 2$, 
    \[ \delta_{q_k} \leq \frac{\epsilon}{2}+\min_{\tau<q_k}G_\tau^a\leq \frac{\epsilon}{2}+\min_{\ell<k}G_{q_\ell}^a \leq 
    \frac{\epsilon}{2}+G_{q_{k-1}}^a.\]
    Since $G_{q_1}^a\leq \theta + R_g$ and, for each $k\geq 2$ we have
    $G_{q_k}^a \leq \epsilon/4+(1/2)G_{q_{k-1}}^a$, it follows that \[ G^a_{q_k}\leq \frac{\epsilon}{4}\left(\sum_{i=0}^{k-2}\frac{1}{2^i}\right)+ 2^{-k+1}(\theta+R_g) \leq \frac{\epsilon}{2}+2^{-k+1}(\theta+R_g)\quad\textup{for $k\geq 2$},\]
    where the second inequality holds because $\sum_{i=0}^{\ell}2^{-i} \leq 2$ for all $\ell \geq 0$.    
Recall that $K=q_{N_q}\leq N$ and that $K_q+1\leq N_q$ by assumption. Then 
\[ G_{K}^a = G_{q_{N_q}}^a \leq \frac{\epsilon}{2} + 2^{-N_q+1}(\theta+R_g)\leq \frac{\epsilon}{2}+ 2^{-K_q}(\theta+R_g)\leq \frac{\epsilon}{2}+\frac{\epsilon}{4}= \frac{3\epsilon}{4}\]
by the definition of $K_q$. 
Moreover 
\[ \delta_{K} = \delta_{q_{N_q}} \leq \frac{\epsilon}{2}+G_{q_{(N_q-1)}}^a \leq \frac{\epsilon}{2}+ \frac{\epsilon}{2}+  2^{-N_q+2}(\theta+R_g) \leq \epsilon+2^{-K_q+1}(\theta+R_g)\leq 3\epsilon/2.\] 
Finally, from Proposition~\ref{prop:OutputAGFW} we have that 
$\Delta_{K} \leq G_K^a + \delta_K \leq 
9\epsilon/4\leq 5\epsilon/2$, as required. 
\end{proof}

\section{Implementing \texttt{ILMO}}
\label{sec:AddMultiErr}
 In this section we discuss how to implement the oracle \texttt{ILMO} given that we have access to a method to approximately solve~\eqref{prob:LinearMinimizationProblem} with either prescribed additive error or (under additional assumptions) prescribed relative error. 
 
 If we have a method to solve~\eqref{prob:LinearMinimizationProblem} to a prescribed additive error, then we can use it to simply guarantee that condition~\ref{Cond2} ($\delta_t$-suboptimality) is satisfied at each iteration. However, this does not automatically guarantee that the associated near-minimizer for~\eqref{prob:LinearMinimizationProblem} satisfies the additional condition $G_t^a \geq 0$. In Section~\ref{sec:Gta0}, we briefly discuss how to ensure that this condition is also satisfied.

The oracle \texttt{ILMO}, with its two possible exit conditions (~\ref{Cond1} (large gap) and~\ref{Cond2} ($\delta_t$-suboptimality)), allows more implementation 
flexibility than just solving~\eqref{prob:LinearMinimizationProblem} to a prescribed additive error. In Section~\ref{sec:MultiErrorOracle}, we restrict to problems for which $g$ is the indicator function of a compact convex set. In this case, we will show that if we only have a method to solve~\eqref{prob:LinearMinimizationProblem} to prescribed \emph{relative error}, it can still be used to implement \texttt{ILMO}. This is less straightforward since, in general, the optimal value of~\eqref{prob:LinearMinimizationProblem} is not uniformly bounded. Moreover, this is exactly the scenario of interest in the motivating example discussed in Section~\ref{sec:Intro-Issues}. 

\subsection{Ensuring the output of \texttt{ILMO} satisfies $G_t^a\geq0$}
\label{sec:Gta0}
The definition of \texttt{ILMO} requires that the point $h^t$ produced by \texttt{ILMO} is such that  $G_t^a(x^t,h^t)$ is nonnegative. 
This is clearly satisfied whenever \texttt{IMLO} returns a point $h^t$ such that $G_t^a \geq \theta+R_g$. However, 
when the prescribed additive error $\delta_t$ for~\eqref{prob:LinearMinimizationProblem} is small, it could be the case that condition~\ref{Cond2} ($\delta_t$-suboptimality) is satisfied but $G_t^a<0$. In this case, it turns out that we can 
just return $h^t = x^t$ as a valid output of \texttt{ILMO}. 
\begin{lemma}
\label{lem:Gta0}
Suppose that $x^t$ and $h^t$ are feasible for~\eqref{eqn:SC-gendef} and satisfy $F^{\textup{lin}}_{x^t}(h^t) - F^{\textup{lin}}_{x^t}(h^\star)\leq \delta_t$, (i.e., condition~\ref{Cond2} ($\delta_t$-suboptimality)). If $G_t^a(x^t,h^t)<0$ then $F^{\textup{lin}}_{x^t}(x^t) - F^{\textup{lin}}_{x^t}(h^\star) \leq \delta_t$ and $G_t^a(x^t,x^t) = 0$. 
\end{lemma}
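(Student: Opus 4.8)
The plan is to argue directly from the definition of the approximate duality gap, \eqref{eqn:DefGkApprox}, namely $G_t^a(x^t,h^t) = F^{\textup{lin}}_{x^t}(x^t) - F^{\textup{lin}}_{x^t}(h^t)$. The hypothesis $G_t^a(x^t,h^t) < 0$ is thus equivalent to the strict inequality $F^{\textup{lin}}_{x^t}(x^t) < F^{\textup{lin}}_{x^t}(h^t)$. Chaining this with the assumed $\delta_t$-suboptimality bound $F^{\textup{lin}}_{x^t}(h^t) - F^{\textup{lin}}_{x^t}(h^\star) \leq \delta_t$ immediately yields $F^{\textup{lin}}_{x^t}(x^t) - F^{\textup{lin}}_{x^t}(h^\star) < \delta_t$, which is the first claim (in fact with strict inequality).

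For the second claim, I would simply substitute $h^t = x^t$ into the definition \eqref{eqn:DefGkApprox}, obtaining $G_t^a(x^t,x^t) = F^{\textup{lin}}_{x^t}(x^t) - F^{\textup{lin}}_{x^t}(x^t) = 0$. Together these two facts show that the point $x^t$ is itself a valid output of \texttt{ILMO}: it satisfies condition~\ref{Cond2} ($\delta_t$-suboptimality) and has $G_t^a(x^t,x^t) = 0 \geq 0$, which is the practical conclusion we want (so that whenever the near-minimizer returned by the additive-error solver has negative approximate gap, we may safely overwrite it with $x^t$).

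There is essentially no technical obstacle here: the statement is a direct unwinding of the definition of $G_t^a$ together with one transitivity step. The only thing worth being careful about is bookkeeping with the inequalities — keeping track of which quantities are $F^{\textup{lin}}_{x^t}$ evaluated at $x^t$, $h^t$, and $h^\star$ — but no property of self-concordance or logarithmic homogeneity is needed, and the argument is a few lines long.
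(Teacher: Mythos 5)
Your proposal is correct and follows exactly the paper's own argument: unwind the definition of $G_t^a$ to see that $G_t^a(x^t,h^t)<0$ means $x^t$ has strictly smaller linearized cost than $h^t$, chain with the $\delta_t$-suboptimality of $h^t$, and note $G_t^a(x^t,x^t)=0$ by definition. Nothing further is needed.
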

\begin{proof}
    Recall from the definition of $F^{\textup{lin}}_{x^t}$ in~\eqref{prob:LinearMinimizationProblem} and the definition of $G_t^a(x^t,h^t)$ in~\eqref{eqn:DefGkApprox} that 
    \[ G_t^a(x^t,h^t) = F^{\textup{lin}}_{x^t}(x^t) - F^{\textup{lin}}_{x^t}(h^t).\]
    If $G_t^a<0$ then $F^{\textup{lin}}_{x^t}(x^t) < F^{\textup{lin}}_{x^t}(h^t)$
    and so $x^t$ is a feasible point for~\eqref{prob:LinearMinimizationProblem} with smaller cost than $h^t$. Since $h^t$ satisfies condition~\ref{Cond2} ($\delta_t$-suboptimality), the same holds for $x^t$, since
    \[ F^{\textup{lin}}_{x^t}(x^t) - F^{\textup{lin}}_{x^t}(h^\star) \leq F^{\textup{lin}}_{x^t}(h^t) - F^{\textup{lin}}_{x^t}(h^\star) \leq \delta_t.\]
    In addition $G_t^a(x^t,x^t)=0$ (from the definition of $G_t^a$), completing the argument.
\end{proof}
We note that if \texttt{ILMO} returns $(h^t,G_t^a) = (x^t,0)$ then either Algorithm~\ref{algo:AGFW} terminates at line 7 or we will have $D_t = 0$ and $\gamma_t = 0$, so that $x^{t+1} = x^t$. 



\subsection{\texttt{ILMO} using an approximate minimizer of $F^{\textup{lin}}_{x^t}$ with prescribed relative error}
\label{sec:MultiErrorOracle}
In this section, we assume that we have access to a method that can compute an approximate minimizer of $F^{\textup{lin}}_{x^t}$ with prescribed relative error. 
We further assume that $g(\cdot)$ is an indicator function, so that $R_g = 0$. This assumption ensures that the optimal value of~\eqref{prob:LinearMinimizationProblem} is related to $\theta$, the parameter of logarithmic homogeneity of $f$.  
\begin{lemma}
\label{lem:ind-opt-bnd}
    Consider an instance of~\eqref{eqn:SC-gendef} with $R_g = 0$. Let $x^t$ and $h^t$ be feasible for~\eqref{eqn:SC-gendef}. 
    Then 
    \begin{align*}
   G_t^a(x^t,h^t) & = -\theta - F^{\textup{lin}}_{x^t}(h^t)\quad\textup{and}\\
   G_t(x^t) & = -\theta - F^{\textup{lin}}_{x^t}(h^\star).
   \end{align*}
   It follows that $F^{\textup{lin}}_{x^t}(h^\star) \leq -\theta$. 
\end{lemma}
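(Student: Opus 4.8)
The plan is to unwind the definition of $F^{\textup{lin}}_{x^t}$ and exploit the logarithmic homogeneity of $f$ through property~\eqref{eqn:SCProp4}, namely $\langle \nabla f(u), u\rangle = -\theta$. Since $R_g = 0$, the function $g$ is the indicator of $\textup{dom}(g)$, so $g$ vanishes at every feasible point; in particular $g(x^t) = g(h^t) = g(h^\star) = 0$. Recalling from~\eqref{eqn:DefGkApprox} that $G_t^a(x^t,h^t) = F^{\textup{lin}}_{x^t}(x^t) - F^{\textup{lin}}_{x^t}(h^t)$, the first step is to evaluate $F^{\textup{lin}}_{x^t}(x^t) = \langle \mathcal{B}^*(\nabla f(\mathcal{B}(x^t))), x^t\rangle + g(x^t) = \langle \nabla f(\mathcal{B}(x^t)), \mathcal{B}(x^t)\rangle$, and then apply~\eqref{eqn:SCProp4} with $u = \mathcal{B}(x^t) \in \textup{int}(\mathcal{K})$ to conclude $F^{\textup{lin}}_{x^t}(x^t) = -\theta$.

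With that identity in hand, the first claimed equation follows immediately: $G_t^a(x^t,h^t) = -\theta - F^{\textup{lin}}_{x^t}(h^t)$. The second equation is the same computation with $h^t$ replaced by the exact minimizer $h^\star$, combined with the observation from~\eqref{eqn:DefGk} that $G_t(x^t) = F^{\textup{lin}}_{x^t}(x^t) - F^{\textup{lin}}_{x^t}(h^\star)$; again substituting $F^{\textup{lin}}_{x^t}(x^t) = -\theta$ gives $G_t(x^t) = -\theta - F^{\textup{lin}}_{x^t}(h^\star)$. Finally, the bound $F^{\textup{lin}}_{x^t}(h^\star) \leq -\theta$ follows because $h^\star$ minimizes $F^{\textup{lin}}_{x^t}$, so $F^{\textup{lin}}_{x^t}(h^\star) \leq F^{\textup{lin}}_{x^t}(x^t) = -\theta$ (using that $x^t$ is feasible for~\eqref{prob:LinearMinimizationProblem}); equivalently, $G_t(x^t) = -\theta - F^{\textup{lin}}_{x^t}(h^\star) \geq 0$ since the Frank-Wolfe duality gap is nonnegative.

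There is essentially no obstacle here — the proof is a two-line manipulation once one notices that the $x^t$-evaluation of the linearized objective is exactly $\langle \nabla f(\mathcal{B}(x^t)), \mathcal{B}(x^t)\rangle = -\theta$ by logarithmic homogeneity. The only thing to be careful about is bookkeeping with the adjoint $\mathcal{B}^*$ and the fact that $g \equiv 0$ on the feasible set (which is where $R_g = 0$ enters), and confirming that $\mathcal{B}(x^t)$ genuinely lies in $\textup{int}(\mathcal{K})$ so that~\eqref{eqn:SCProp4} applies — but this is guaranteed by feasibility of $x^t$ for~\eqref{eqn:SC-gendef} together with the standing assumption $\textup{int}(\mathcal{K}) \cap \mathcal{B}(\textup{dom}(g)) \neq \emptyset$ and the structure of the problem.
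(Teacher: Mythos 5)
Your proof is correct and follows essentially the same route as the paper: evaluate $F^{\textup{lin}}_{x^t}(x^t) = \langle \nabla f(\mathcal{B}(x^t)),\mathcal{B}(x^t)\rangle = -\theta$ via logarithmic homogeneity (using $g(x^t)=0$ since $R_g=0$), subtract to get both identities, and deduce the final bound from nonnegativity of the duality gap. No gaps.
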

\begin{proof}
Since $x^t$ is feasible for~\eqref{eqn:SC-gendef} and $R_g=0$, we have that $g(x^t)=0$. We then have $F^{\textup{lin}}_{x^t}(x^t) = \langle \nabla f(\mathcal{B}(x^t)),\mathcal{B}(x^t)\rangle = -\theta$ since $f$ is $\theta$-logarithmically-homogeneous (see Proposition~\ref{prop:SCLHProperties}). Using the fact that $G_t^a(x^t,h^t) = F^{\textup{lin}}_{x^t}(x^t) - F^{\textup{lin}}_{x^t}(h^t)$, we obtain the stated expression for $G_t^a(x^t,h^t)$. The expression for $G_t(x^t)$ follows by substituting $h^\star$ for $h^t$. The observation that $F^{\textup{lin}}_{x^t}(h^\star) \leq -\theta$ follows since $G_t \geq 0$ (Fact~\ref{fct:gap-bnd}). 
\end{proof}


The following result shows that a method to solve~\eqref{prob:LinearMinimizationProblem} with a multiplicative error guarantee is enough to implement \texttt{ILMO} in Algorithm~\ref{algo:AGFW}. Crucially, the prescribed multiplicative error required is uniformly bounded, independent of the current iterate $x^t$.

\begin{lemma}\label{lemma:LMODef-RelErr}
Consider an instance of~\eqref{eqn:SC-gendef} with $R_g = 0$. Let $\delta_t\geq 0$, let $c>2$ and let $\tau_t := \frac{\textup{min}\{\delta_t,(c-2)\theta\}}{c\theta}<1$. If $h^t$ is feasible for~\eqref{prob:LinearMinimizationProblem} and
\begin{equation}\label{eqn:approxFWLMO-RelErr}
F^{\textup{lin}}_{x^t}(h^\star) \leq F^{\textup{lin}}_{x^t}(h^t) \leq (1-\tau_t)F^{\textup{lin}}_{x^t}(h^\star),
\end{equation}
then at least one of the two conditions \ref{Cond1} (large gap) and~\ref{Cond2} ($\delta_t$-suboptimality) is satisfied.
\end{lemma}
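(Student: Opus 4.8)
The plan is to use Lemma~\ref{lem:ind-opt-bnd} to rephrase everything in terms of $F^{\textup{lin}}_{x^t}(h^t)$ and $F^{\textup{lin}}_{x^t}(h^\star)$, and then to split into two cases according to whether the large-gap condition~\ref{Cond1} already holds. First note that since $c>2$ and $\theta>0$ we have $\tau_t \le \tfrac{(c-2)\theta}{c\theta} < 1$, so $1-\tau_t>0$ and~\eqref{eqn:approxFWLMO-RelErr} is meaningful. Recall from Lemma~\ref{lem:ind-opt-bnd} (applicable since $R_g=0$) that $G_t^a(x^t,h^t) = -\theta - F^{\textup{lin}}_{x^t}(h^t)$, that $G_t = -\theta - F^{\textup{lin}}_{x^t}(h^\star)$, and that $F^{\textup{lin}}_{x^t}(h^\star) \le -\theta < 0$.

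If $G_t^a(x^t,h^t) > \theta = \theta + R_g$, then condition~\ref{Cond1} holds and we are done, so assume $G_t^a(x^t,h^t) \le \theta$. The key step is to turn this bound into a \emph{uniform} bound on $-F^{\textup{lin}}_{x^t}(h^\star)$ (which is not uniformly bounded a priori, since the gradient of $f$ blows up near the boundary). From $G_t^a(x^t,h^t) \le \theta$ and the expression $G_t^a(x^t,h^t)=-\theta-F^{\textup{lin}}_{x^t}(h^t)$ we get $F^{\textup{lin}}_{x^t}(h^t) \ge -2\theta$; combining with $F^{\textup{lin}}_{x^t}(h^t) \le (1-\tau_t)F^{\textup{lin}}_{x^t}(h^\star)$ and dividing by $1-\tau_t>0$ yields $-F^{\textup{lin}}_{x^t}(h^\star) \le \tfrac{2\theta}{1-\tau_t}$.

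Next I would bound the additive error. Subtracting $F^{\textup{lin}}_{x^t}(h^\star)$ in~\eqref{eqn:approxFWLMO-RelErr} gives $F^{\textup{lin}}_{x^t}(h^t) - F^{\textup{lin}}_{x^t}(h^\star) \le -\tau_t F^{\textup{lin}}_{x^t}(h^\star) = \tau_t\bigl(-F^{\textup{lin}}_{x^t}(h^\star)\bigr) \le \tfrac{2\theta\tau_t}{1-\tau_t}$, so it suffices to check $\tfrac{2\theta\tau_t}{1-\tau_t} \le \delta_t$, which then gives condition~\ref{Cond2}. Here the specific choice of $\tau_t$ enters, and the argument splits on which term attains the minimum in $\tau_t$. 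If $\delta_t \le (c-2)\theta$ then $\tau_t = \delta_t/(c\theta)$, so $\tfrac{2\theta\tau_t}{1-\tau_t} = \tfrac{2\delta_t}{c\theta - \delta_t}$, and for $\delta_t>0$ this is $\le \delta_t$ iff $c\theta - \delta_t \ge 2$; since $\delta_t \le (c-2)\theta$ we have $c\theta - \delta_t \ge 2\theta \ge 2$, using $\theta\ge 1$ (Proposition~\ref{prop:SCLHProperties}, \eqref{eqn:SCProp5}). (When $\delta_t=0$, then $\tau_t=0$ forces $F^{\textup{lin}}_{x^t}(h^t) = F^{\textup{lin}}_{x^t}(h^\star)$, so~\ref{Cond2} holds trivially.) If instead $\delta_t > (c-2)\theta$ then $\tau_t = (c-2)/c$ and $1-\tau_t = 2/c$, whence $\tfrac{2\theta\tau_t}{1-\tau_t} = (c-2)\theta < \delta_t$. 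In either case condition~\ref{Cond2} holds, completing the argument.

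I expect the main obstacle to be the second paragraph: realizing that one must first exploit the \emph{failure} of the large-gap condition to obtain a uniform bound on $-F^{\textup{lin}}_{x^t}(h^\star) = G_t + \theta$, because a fixed relative error $\tau_t$ translates into an additive error proportional to $-F^{\textup{lin}}_{x^t}(h^\star)$, which is otherwise unbounded. Once the bound $-F^{\textup{lin}}_{x^t}(h^\star) \le 2\theta/(1-\tau_t)$ is secured, the remaining case analysis on the minimum in the definition of $\tau_t$ is routine arithmetic using only $c>2$ and $\theta\ge 1$.
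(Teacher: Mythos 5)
Your proof is correct and is essentially the paper's argument run in contrapositive form: the paper cases on whether $-F^{\textup{lin}}_{x^t}(h^\star) \le c\theta$ (small implies~\ref{Cond2}, large implies~\ref{Cond1}), while you assume~\ref{Cond1} fails, deduce from that the uniform bound on $-F^{\textup{lin}}_{x^t}(h^\star)$, and then verify~\ref{Cond2} --- the same interplay between the large-gap condition and the unbounded optimal value. One harmless algebra slip: with $\tau_t=\delta_t/(c\theta)$ one has $\frac{2\theta\tau_t}{1-\tau_t}=\frac{2\theta\delta_t}{c\theta-\delta_t}$ (you dropped a factor of $\theta$), so the inequality you need is $c\theta-\delta_t\ge 2\theta$ rather than $\ge 2$; your chain $c\theta-\delta_t\ge 2\theta\ge 2$ already establishes the correct, stronger condition, so the conclusion stands.
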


\begin{proof}
We prove the result by considering two cases for the value of $OPT_t:= F^{\textup{lin}}_{x^t}(h^\star)$.
From Lemma~\ref{lem:ind-opt-bnd} we know that $-OPT_t\geq \theta$. The argument is based on two cases, depending on whether $-OPT_t$ is large or small. 
\paragraph{Case I: $-OPT_t \leq c\theta$.}
In this case, $\tau_t \leq \frac{\textup{min}\{\delta_t,(c-2)\theta\}}{c\theta} \leq \frac{\delta_t}{(-OPT_t)}$. Thus, $\tau_t(-OPT_t)\leq \delta_t$. Substituting this inequality into~\eqref{eqn:approxFWLMO-RelErr}, we see that
\begin{equation}
F^{\textup{lin}}_{x^t}(h^t) \leq
OPT_t + \delta_t,
\end{equation}
thus, satisfying the condition~\ref{Cond2} ($\delta_t$-suboptimality).

\paragraph{Case II: $-OPT_t > c\theta.$}
From Lemma~\ref{lem:ind-opt-bnd} and~\eqref{eqn:approxFWLMO-RelErr}, we have
\begin{equation}
\begin{split}
G_t^a &= -F^{\textup{lin}}_{x^t}(h^t) - \theta\\
&\geq (1-\tau_t)(-OPT_t) - \theta \\
&\underset{(i)}{>} (1-\tau_t)c\theta-\theta\\
&\underset{(ii)}{\geq} \left(1-\frac{\textup{min}\{\delta_t,(c-2)\theta\}}{c\theta}\right)c\theta - \theta\\
&= (c-1)\theta -\textup{min}\{\delta_t,(c-2)\theta\}\\
&\geq (c-1)\theta - (c-2)\theta\\
&\geq \theta,
\end{split}
\end{equation}
where (i) follows since $-OPT_t>c\theta$ and (ii) follows from the definition of $\tau_t$. Since $R_g = 0$, we have that $G_t^a \geq \theta + R_g$, showing that condition~\ref{Cond1} ($\delta_t$-suboptimality) is satisfied.
\end{proof}


\section{Probabilistic \texttt{LMO}}\label{sec:ProbLMO}
Algorithm~\ref{algo:AGFW} uses an oracle \texttt{ILMO} that generates a pair $(h^t,G^t_a)$ such that the output satisfies at least one of the two conditions~\ref{Cond1} (large gap) and~\ref{Cond2} ($\delta_t$-suboptimality). In this section, we consider a setting where we only have access to a probabilistic oracle \texttt{PLMO}, that fails with probability at most $p$ to generate the output as required by the algorithm (see Definition~\ref{def:PLMO}). 
We first provide an algorithm that uses \texttt{PLMO} to generate an $\epsilon$-optimal solution to~\eqref{eqn:SC-gendef} with any desired probability of success.
We then analyze the convergence of the algorithm for both strategies~\ref{Strat1} (scheduled) and~\ref{Strat2} (adaptive) of choosing the sequence $\delta_t$.

\paragraph{Framework of the algorithm.} Algorithm~\ref{algo:AGFW-P} provides the framework of the algorithm that uses \texttt{PLMO}. There are two main differences between Algorithm~\ref{algo:AGFW} and Algorithm~\ref{algo:AGFW-P}. The first is that we use the probablistic oracle \texttt{PLMO} (see Definition~\ref{def:PLMO}) in Step 6 instead of \texttt{ILMO}, i.e., with probability at most $p$, the pair $(h^t,G_t^a)$ generated in the Step 6  of Algorithm~\ref{algo:AGFW-P} satisfies neither condition~\ref{Cond1} (large gap) nor condition~\ref{Cond2} ($\delta_t$-suboptimality), but does satisfy $G_t^a\geq 0$. The second difference is in the stopping criteria of the algorithm. While Algorithm~\ref{algo:AGFW} stops when $G_t^a \leq \epsilon$ and $\delta_t\leq 3\epsilon/2$, Algorithm~\ref{algo:AGFW-P} stops when the bounds $G_t^a \leq \epsilon$ and $\delta_t\leq 3\epsilon/2$ are satisfied $l$ times. We will see (in Proposition~\ref{prop:OutputAGFW-P}) that the parameter $l$ controls the probability that the optimality gap is small when the algorithm stops.

\begin{algorithm}[tbh]
\caption{Approximate Generalized Frank-Wolfe with \texttt{PLMO}}
\label{algo:AGFW-P}
\textbf{Input}: Problem~\eqref{eqn:SC-gendef}, where $f(\cdot)$ is a 2-self-concordant, $\theta$-logarithmically-homogeneous barrier function, $\epsilon \leq \theta+R_g$: suboptimality parameter, scheduled strategy~\ref{Strat1} or adaptive strategy~\ref{Strat2} for setting the value of $\delta_t$\\
\textbf{Output}: $\widehat{x}_{\epsilon}$: $\widehat{x}_{\epsilon}$ an $\epsilon$-optimal solution to~\eqref{eqn:SC-gendef} with probability at least $1-p^l$
\begin{algorithmic}[1] 
\STATE Initialize $x^0 \in \mathbb{R}^n$.
\STATE Set $t\leftarrow 0$, $\textup{count} = 0$.
\WHILE{$\textup{count} < l$}
\STATE Compute $\mathcal{B}^{\star}(\nabla f(\mathcal{B}(x^t)))$.
\STATE Set $\delta_t \geq 0$ according to the input strategy.
\STATE $(h^t,G_t^a) = \texttt{PLMO} (\mathcal{B}^{\star}(\nabla f(\mathcal{B}(x^t))),\delta_t, x^t)$.
\IF {$G_t^a \leq \epsilon$ and $\delta_t \leq 3\epsilon/2$} \STATE $\textup{count} = \textup{count} + 1$.\ENDIF
\STATE Compute $D_t = \|\mathcal{B}(h^t-x^t)\|_{\mathcal{B}(x^t)}$.
\STATE Set $\gamma_t = \min \left\{ \frac{G_t^a}{D_t(D_t+G_t^a)},1\right\}$.
\STATE Update $x^{t+1} = (1-\gamma_t)x^t + \gamma_t h^t$.
\STATE $t = t+1$.
\ENDWHILE
\STATE \textbf{return $(x_t,\delta_t)$}
\end{algorithmic}
\end{algorithm}

It is sometimes fruitful to think of the iterates of Algorithm~\ref{algo:AGFW-P} as being iterates of Algorithm~\ref{algo:AGFW} with respect to an alternative (random) sequence $(\hat{\delta}_t)_{t\geq 0}$. To see how this is the case, let  $(\delta_t)_{t\geq 0}$ be a non-negative sequence and let $(b_t)_{t\geq 0}$ be a sequence of independent random variables where $b_t =1$ with probability $1-p_t$ and $b_t = \infty$ with probability $p_t$ and $p_t\leq p$ for all $t$. Then we can think of the iterates of Algorithm~\ref{algo:AGFW-P} 
with respect to $(\delta_t)_{t\geq 0}$ as the iterates of 
Algorithm~\ref{algo:AGFW} with respect to the random sequence $(\hat{\delta}_t)_{t\geq 0}$ where $\hat{\delta}_t = b_t\delta_t$ for all $t$. The multiplier $b_t$ allows \texttt{ILMO} to model the behaviour of \texttt{PLMO}. Indeed when the multiplier $b_t = \infty$, condition~\ref{Cond2} ($\delta_t$-suboptimality) holds trivially with respect to $\hat{\delta}_t = b_t\delta_t = \infty$, effectively enforcing no additional constraint on the output of the oracle, apart from $G_t^a\geq 0$. 

This viewpoint allows us to leverage existing results for the iterates of Algorithm~\ref{algo:AGFW} in the context of Algorithm~\ref{algo:AGFW-P}. In particular, any results that are independent of the sequence $(\delta_t)_{t \geq 0}$, such as Lemmas~\ref{lemma:boundNr} and~\ref{lemma:boundNs}, continue to hold for Algorithm~\ref{algo:AGFW-P}.
Other results that depend on $\delta_t$ (such as the bound~\eqref{eq:q-ineq} in Lemma~\ref{lem:sub-ineq}) remain valid as long as we replace $\delta_t$ with the random variable $\hat{\delta}_t = b_t\delta_t$. In light of this observation, the following result summarizes the key inequalities that hold for the iterates of Algorithm~\ref{algo:AGFW-P}.
\begin{lemma}
    \label{lem:key-ineq-p}
    Let $(x^t)_{t\geq 0}$ and $(h^t)_{t\geq 0}$ be sequences of feasible points for~\eqref{eqn:SC-gendef} generated by Algorithm~\ref{algo:AGFW-P}. Partition the iterates into the $q$-, $r$-, and $s$- subsequences with respect to some $C>0$ as in Definition~\ref{list:SeqIterates}. Then
    \begin{itemize}
        \item $N_r \leq \lfloor 10.6 \Delta_0\rfloor$
        \item If $0\leq l < \lfloor j/2+1\rfloor$ then $\displaystyle{\min_{k\in \{l+1,\ldots,j+1\}} G_{s_k}^a \leq \frac{24(2C+1)(\theta+R_g)^2}{j+1}}$.
        \item If $l \geq \lfloor j/2+1\rfloor$ then $\displaystyle{\min_{k\in \{l+1,\ldots,j+1\}} G_{s_k}^a \leq \frac{12(2C+1)(\theta+R_g)^2}{\sqrt{(l+2)(j-l)}}}$.
        \item The random variables $\frac{1}{2C}\delta_{q_k} - G_{q_k}^a$ (indexed by $k\geq 1$) are independent, and are each non-negative with probability at least $1-p$.
    \end{itemize}
\end{lemma}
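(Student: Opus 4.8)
The first three bullets are just Lemma~\ref{lemma:boundNr} together with (two of the three parts of) Lemma~\ref{lemma:boundNs}, so the plan is to argue that those results carry over verbatim to Algorithm~\ref{algo:AGFW-P}. I would invoke the reduction described above: the iterates of Algorithm~\ref{algo:AGFW-P} run with sequence $(\delta_t)$ are precisely the iterates of Algorithm~\ref{algo:AGFW} run with the random sequence $\hat\delta_t = b_t\delta_t$. The proofs of Lemmas~\ref{lemma:boundNr} and~\ref{lemma:boundNs} use only Proposition~\ref{prop:DeltaConv} (monotonicity and the per-step decrease of $\Delta_t$) and the defining inequalities of the $r$- and $s$-subsequences, none of which mentions the target-accuracy sequence at all; hence they hold for Algorithm~\ref{algo:AGFW} for \emph{every} choice of $\delta$-sequence, in particular along every realization of $(\hat\delta_t)$, and therefore pathwise for Algorithm~\ref{algo:AGFW-P}. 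Since $\Delta_t$ and $G_t^a$ are well-defined random variables along the run and the subsequences are defined through them, this gives the first three bullets immediately.

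For the last bullet I would first isolate its deterministic content. Work on the event $\{b_{q_k}=1\}$ (which lives inside $\{N_q\ge k\}$). There the call to \texttt{PLMO} at iteration $q_k$ behaves like \texttt{ILMO} with parameter $\delta_{q_k}$, so its output satisfies condition~\ref{Cond1} (large gap) or condition~\ref{Cond2} ($\delta_{q_k}$-suboptimality); but $q_k$ lies in the $q$-subsequence, so $G_{q_k}^a\le\theta+R_g$, which rules out~\ref{Cond1}, hence~\ref{Cond2} holds. Then I would repeat the computation from the proof of the $q$-subsequence bound in Lemma~\ref{lem:sub-ineq}: Fact~\ref{fct:gap-bnd} and Lemma~\ref{lemma:relationGtGta} give $\Delta_{q_k}\le G_{q_k}\le G_{q_k}^a+\delta_{q_k}$, and combining this with $G_{q_k}^a<\Delta_{q_k}/(2C+1)$ (the defining inequality of the $q$-subsequence) and rearranging yields $G_{q_k}^a\le\delta_{q_k}/(2C)$, i.e. $\tfrac{1}{2C}\delta_{q_k}-G_{q_k}^a\ge 0$. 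Thus the ``bad'' event $\{\tfrac{1}{2C}\delta_{q_k}-G_{q_k}^a<0\}$ is contained in $\{b_{q_k}=\infty\}$.

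It remains to supply the probabilistic structure, and this is the step I expect to require the most care. Let $(\mathcal{F}_t)_{t\ge 0}$ be the filtration generated by all randomness used in the first $t$ iterations, so $\mathcal{F}_{t-1}$ determines $x^t,\delta_t,\Delta_t$ and the subsequence membership of every iteration $<t$, while $b_t$ is independent of $\mathcal{F}_{t-1}$ with $\textup{Pr}(b_t=\infty\mid\mathcal{F}_{t-1})=p_t\le p$. The delicacy is that the indices $q_1<q_2<\cdots$ are themselves random and, through the $G_t^a$'s, depend on the $b_t$'s, so one cannot simply declare $b_{q_1},b_{q_2},\dots$ to be i.i.d. The plan is to phrase the conclusion at the level of the full iterate sequence rather than the subsequence: by the previous paragraph the indicator ``iteration $t$ is a $q$-iterate with $\tfrac{1}{2C}\delta_t-G_t^a<0$'' is bounded above by $\mathbf{1}[b_t=\infty]$, and $(\mathbf{1}[b_t=\infty])_{t\ge 0}$ is a sequence of conditionally independent Bernoulli variables with parameter at most $p$; restricting to the $q$-subsequence realizes the events $\{\tfrac{1}{2C}\delta_{q_k}-G_{q_k}^a<0\}$ as a subsequence of this dominated sequence, which is exactly the form in which the statement is used in the convergence analysis (Lemmas~\ref{lemma:ConvProb-OL} and~\ref{lemma:ConvProb-CL}). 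Making this reduction precise — carefully tracking how the data-dependent partition interacts with the freshly sampled $b_t$, e.g.\ via a stopping-time argument at the successive indices $q_k$ — is where the bulk of the work lies; everything else is a direct transcription of facts already established for Algorithm~\ref{algo:AGFW}.
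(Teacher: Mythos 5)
Your proposal is correct and follows essentially the same route as the paper, which gives no formal proof of this lemma but instead relies on exactly the reduction you describe: the iterates of Algorithm~\ref{algo:AGFW-P} are the iterates of Algorithm~\ref{algo:AGFW} run with the random sequence $\hat{\delta}_t=b_t\delta_t$, so the $\delta$-independent Lemmas~\ref{lemma:boundNr} and~\ref{lemma:boundNs} carry over verbatim and the $q$-subsequence bound of Lemma~\ref{lem:sub-ineq} carries over on the event $\{b_{q_k}=1\}$. Your extra care about the fact that the indices $q_k$ are themselves random (so that independence of the variables $\tfrac{1}{2C}\delta_{q_k}-G_{q_k}^a$ requires a domination/stopping-time argument rather than a bare assertion) addresses a genuine subtlety that the paper leaves implicit, and is consistent with how the lemma is ultimately consumed via Lemma~\ref{lemma:ProbGoodIterates}.
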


The following result shows that the parameter $l$ can be used to control the probability that Algorithm~\ref{algo:AGFW-P} successfully returns a near-optimal point.
\begin{prop}\label{prop:OutputAGFW-P}
If $0\leq \epsilon \leq \theta+R_g$ and Algorithm~\ref{algo:AGFW-P} (with parameter $l$) stops after $T_l$ iterations, we have $\Delta_{T_l}\leq G_{T_l}^a+\delta_{T_l} \leq 5\epsilon/2$ with probability at least $1-p^l$.
\end{prop}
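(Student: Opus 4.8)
The plan is to combine three ingredients: (i) the deterministic output guarantee behind Proposition~\ref{prop:OutputAGFW}, invoked at a test-passing iteration where the oracle behaved correctly; (ii) monotonicity of $(\Delta_t)_{t\ge 0}$, which still holds for Algorithm~\ref{algo:AGFW-P} because Proposition~\ref{prop:DeltaConv} is independent of the sequence $(\delta_t)_{t\ge0}$ and its proof uses only $G_t^a\ge 0$ and the step-size rule (neither condition~\ref{Cond1} nor~\ref{Cond2}); and (iii) a conditioning argument over the $l$ successful test iterations that turns the per-call failure probability $p$ into $p^l$. Concretely, when Algorithm~\ref{algo:AGFW-P} halts after $T_l$ iterations, let $t_1<t_2<\cdots<t_l=T_l$ be the iterations at which the test ``$G_t^a\le\epsilon$ and $\delta_t\le 3\epsilon/2$'' succeeded. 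The inequality $G_{T_l}^a+\delta_{T_l}\le 5\epsilon/2$ is immediate from the halting rule at $t_l=T_l$, so everything reduces to controlling $\Delta_{T_l}$.

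For each $j$ we have $G_{t_j}^a\le\epsilon\le\theta+R_g$, so condition~\ref{Cond1} (large gap) cannot hold at $t_j$; hence, on the event that \texttt{PLMO} does \emph{not} fail at $t_j$, condition~\ref{Cond2} ($\delta_{t_j}$-suboptimality) holds there, and Lemma~\ref{lemma:relationGtGta} together with Fact~\ref{fct:gap-bnd} gives $\Delta_{t_j}\le G_{t_j}\le G_{t_j}^a+\delta_{t_j}\le 5\epsilon/2$. By monotonicity of $(\Delta_t)$ and $T_l\ge t_j$, this already forces $\Delta_{T_l}\le\Delta_{t_j}\le 5\epsilon/2$ (and, when it is $j=l$ that succeeds, the full chain $\Delta_{T_l}\le G_{T_l}^a+\delta_{T_l}\le 5\epsilon/2$). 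Taking the contrapositive, the event $\{\Delta_{T_l}>5\epsilon/2\}$ is contained in the event that \texttt{PLMO} failed at \emph{every} one of $t_1,\dots,t_l$, so it remains to bound the probability of the latter by $p^l$.

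For that I would use the coupling viewpoint preceding Lemma~\ref{lem:key-ineq-p}: regard the run as Algorithm~\ref{algo:AGFW} driven by $\hat\delta_t=b_t\delta_t$ with independent multipliers $b_t$, where $\{\text{\texttt{PLMO} fails at }t\}\subseteq\{b_t=\infty\}$ and the conditional probability of a failure at iteration $t$, given the history strictly before that iteration, is at most $p$ (the oracle's inputs at iteration $t$ are measurable with respect to that history). Writing $\sigma_1<\sigma_2<\cdots$ for the stopping times at which the test succeeds (so $\sigma_l=T_l$), I would show inductively that $\Pr[\text{\texttt{PLMO} fails at }\sigma_k\mid \mathcal{F}_{\sigma_{k-1}}]\le p$ — using the tower property, expanding over the value of $\sigma_k$, and the conditional bound at each candidate iteration — and then multiply the $l$ conditional bounds along $\sigma_1,\dots,\sigma_l$ to obtain $\Pr[\text{fail at all of }\sigma_1,\dots,\sigma_l]\le p^l$. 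Combining with the previous paragraph gives $\Pr[\Delta_{T_l}\le 5\epsilon/2]\ge 1-p^l$, and on that event the remaining inequality $\Delta_{T_l}\le G_{T_l}^a+\delta_{T_l}$ follows as above.

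The main obstacle is exactly this last probabilistic step. The indices $t_1,\dots,t_l$ of the successful test iterations are themselves random and are determined by the oracle outputs $G_t^a$, so one must check carefully that restricting attention to the (random) subsequence of test-passing iterations does not spoil the bound $\Pr[\text{fail at }t\mid\text{past}]\le p$; this is where the independence of the oracle's internal randomness across calls — built into the \texttt{PLMO} model and reflected in the independence of the $b_t$ and of the quantities $\tfrac{1}{2C}\delta_{q_k}-G_{q_k}^a$ in Lemma~\ref{lem:key-ineq-p} — must be used, and the argument has to be phrased in terms of the filtration at the stopping times $\sigma_k$ rather than treating the $t_j$ as fixed. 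Everything else (the deterministic bound at a ``good'' test-passing iterate and the monotonicity of $\Delta_t$ for Algorithm~\ref{algo:AGFW-P}) is routine given the results already established.
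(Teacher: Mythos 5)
Your proposal is correct and follows essentially the same route as the paper's proof: identify the $l$ test-passing iterations, note that the large-gap condition cannot hold there (since $G^a_{T_i}\le\epsilon\le\theta+R_g$), so a successful oracle call at any one of them yields $\Delta_{T_i}\le G_{T_i}^a+\delta_{T_i}\le 5\epsilon/2$, and conclude by monotonicity of $\Delta_t$ plus the bound $p^l$ on all $l$ calls failing. Your stopping-time/filtration treatment of that last probabilistic step is more careful than the paper's one-line assertion, but it is the same argument.
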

\begin{proof}
Let $(T_i)_{i\geq 1}$ be the iterations of Algorithm~\ref{algo:AGFW-P} at which $G_{T_i}^a \leq \epsilon$ and $\delta_{T_i}\leq 3\epsilon/2$.
If Algorithm~\ref{algo:AGFW-P} stops, it does so after $T_l$ iterations. 
Since $\epsilon\leq \theta+R_g$, we have that $G_t^a \leq \theta+R_g$ at each $(T_i)_{i=1}^l$. By the definition of \texttt{PLMO}, $G_t^a \geq 0$ for all $t$. Therefore, $(\Delta_t)_{t\geq 0}$ is monotonically non-increasing. As such
\[ \Delta_{T_l} \leq \Delta_{T_i} \leq G_{T_i}\quad\textup{for all $1\leq i \leq l$.}\]
Suppose that there is some $1\leq i\leq l$ such that condition~\ref{Cond2} ($\delta_t$-suboptimality) holds at iterate $T_i$. Then  
\[\Delta_{T_i}\leq G_{T_i} \underset{(i)}{\leq} G_{T_i}^a +\delta_{T_i}\leq 5\epsilon/2,\] where (i) follows from Lemma~\ref{lemma:relationGtGta}. Combining these inequalities gives that $\Delta_{T_l}\leq \Delta_{T_i} \leq 5\epsilon/2$. 

Finally, note that the probability that condition~\ref{Cond2} ($\delta_t$-suboptimality) is satisfied for at least one out of $l$ iterates is at least $1-p^l$, so it follows that $\Delta_{T_l}\leq 5\epsilon/2$ with probability at least $1-p^l$.
\end{proof}



\subsection{Convergence Analysis of Algorithm~\ref{algo:AGFW-P}}
The convergence analysis of Algorithm~\ref{algo:AGFW-P}, like that of Algorithm~\ref{algo:AGFW}, also makes use of the partition of the iterates into the $q$-, $r$-, and $s$-subsequences,  
as in Definition~\ref{list:SeqIterates}. If the first $N$ iterates of Algorithm~\ref{algo:AGFW-P} are divided into sequences the $q$-, $r$-, and $s$-subsequences, let $N_r$, $N_s$, and $N_q$ (defined in~\eqref{eqn:DefNrNsNt}) respectively denote the length of each subsequence.

Our analysis of Algorithm~\ref{algo:AGFW-P} follows a broadly similar approach to the analysis of Algorithm~\ref{algo:AGFW}. The key difference is that we need to bound the time it takes to achieve the stopping criterion for the $l$th time, and that the inequality that allows us to control the approximate duality gap along the $q$-subsequence only holds with probability at least $1-p$ independently on each iteration. 

The main technical tool needed to deal with these issues is a 
standard tail bound on the number of successes in a sequence of independent (but not necessarily identically distributed) Bernoulli trials.

\begin{lemma}\label{lemma:ProbGoodIterates}
Let $l$ be a positive integer, let $0<p<1$ and let $0<\overline{p}<1$. Let
$M$ be a positive integer such that $M\geq \frac{l-1+\log(1/\overline{p})}{1-p} + 2\log(1/\overline{p})$. 
Let $\xi_1,\xi_2,\ldots,\xi_M$ be a collection of independent random variables 
such that $\xi_i$ takes value $1$ with probability $1-p_i$ and $\xi_i$ takes value $0$ with probability $p_i$, and assume that $0\leq p_i\leq p$ for all $i$. 
Then the probability that at least $l$ of the variables $\xi_1,\xi_2,\ldots,\xi_M$ takes value $1$ is at least $1-\overline{p}$.
\end{lemma}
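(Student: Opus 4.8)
The plan is to reduce the statement to a Chernoff-type lower tail bound for a sum of independent (non-identical) Bernoulli random variables. Let $S_M = \sum_{i=1}^M \xi_i$ be the number of successes. Since $\Pr[\xi_i = 1] = 1-p_i \geq 1-p$, we can stochastically dominate $S_M$ from below by $S'_M = \sum_{i=1}^M \xi'_i$ where the $\xi'_i$ are i.i.d.\ Bernoulli with success probability exactly $1-p$; so it suffices to show $\Pr[S'_M \geq l] \geq 1 - \overline{p}$, i.e.\ $\Pr[S'_M \leq l-1] \leq \overline{p}$. The mean is $\mu := \mathbb{E}[S'_M] = (1-p)M$, and by the choice of $M$ we have $\mu \geq l - 1 + \log(1/\overline{p}) + 2(1-p)\log(1/\overline{p}) \geq l-1 + \log(1/\overline{p})$, so we are asking for a lower-tail deviation of $S'_M$ below a value that is at least $\log(1/\overline{p})$ below its mean.

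The cleanest route is the multiplicative Chernoff lower-tail bound: for any $\delta \in (0,1)$,
\[
\Pr[S'_M \leq (1-\delta)\mu] \leq \exp\!\left(-\frac{\delta^2 \mu}{2}\right).
\]
I would apply this with the threshold $(1-\delta)\mu = l-1$, i.e.\ $\delta = 1 - (l-1)/\mu = (\mu - (l-1))/\mu$. Then $\delta^2 \mu = (\mu - (l-1))^2/\mu$, and since $\mu - (l-1) \geq \log(1/\overline{p})$ and also $\mu - (l-1) \geq 2(1-p)\log(1/\overline{p})$ — hmm, I actually want a clean lower bound on $(\mu-(l-1))^2/\mu$. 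Using $\mu - (l-1) \geq \log(1/\overline{p})$ for one factor and $\mu \leq M$... that goes the wrong way. The right manipulation: write $a := \mu - (l-1) \geq \log(1/\overline{p})$ and note $\mu = (l-1) + a$; then the exponent is $-a^2/(2((l-1)+a))$. To make this at most $-\log(1/\overline{p})$ we need $a^2 \geq 2\log(1/\overline{p})\,((l-1)+a)$, i.e.\ $a^2 - 2\log(1/\overline{p})\,a - 2\log(1/\overline{p})(l-1) \geq 0$. Since $a \geq \log(1/\overline{p})$ handles the linear term roughly, and the extra slack $a \geq 2(1-p)\log(1/\overline{p})$ combined with $(l-1) \leq (1-p)M \le \mu$ is designed to absorb the $2\log(1/\overline{p})(l-1)$ term, this is exactly what the stated lower bound on $M$ is engineered to give; I would verify the quadratic inequality directly by substituting the two lower bounds on $a$ coming from the hypothesis $M \geq \frac{l-1+\log(1/\overline{p})}{1-p} + 2\log(1/\overline{p})$, which rearranges to $(1-p)M - (l-1) \geq \log(1/\overline{p}) + 2(1-p)\log(1/\overline{p})$, i.e.\ $a \geq \log(1/\overline{p})(1 + 2(1-p))$.

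The main obstacle is purely bookkeeping: choosing the right split of the "excess" $a = \mu - (l-1)$ between killing the $(l-1)$ term and the $a$ term in the denominator of the Chernoff exponent, so that the arithmetic closes with the precise constant in the hypothesis on $M$. There is no conceptual difficulty — stochastic domination by i.i.d.\ Bernoulli$(1-p)$ plus the standard multiplicative Chernoff bound does all the work — but I would double-check the edge case where $(l-1)/\mu$ is not in $(0,1)$ (if $l-1 \geq \mu$ the bound $M \geq \cdots$ would be violated, so this cannot happen) and the trivial cases $l = 1$ (then $\Pr[S'_M \geq 1] = 1-p^M$, and one checks $M$ large enough) separately if needed. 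An alternative to the multiplicative form is Hoeffding's inequality, $\Pr[S'_M \leq \mu - t] \leq \exp(-2t^2/M)$ with $t = \mu - (l-1)$; this also works and may give even cleaner constants, but then the factor $(1-p)$ must be tracked through $t^2/M$, so I would present whichever of the two makes the final inequality match the stated $M$ most transparently.
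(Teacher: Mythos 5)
Your overall strategy---reduce to i.i.d.\ Bernoulli$(1-p)$ variables by stochastic domination and then apply a lower-tail concentration inequality at the threshold $l-1$---is essentially the same as the paper's, which applies Hoeffding's inequality directly to the (non-identically distributed) failure indicators $1-\xi_i$ with deviation $\tau=\log(1/\overline{p})$. However, the step you explicitly leave unverified is precisely where the argument breaks, and it cannot be repaired by better bookkeeping. Writing $\mu=(1-p)M$ and $a=\mu-(l-1)$, the hypothesis on $M$ only gives $a\geq (1+2(1-p))\log(1/\overline{p})$, a quantity that does not grow with $l$ or $M$. The multiplicative Chernoff exponent is $a^2/(2\mu)=a^2/(2(l-1+a))$, which tends to $0$ as $l\to\infty$ while $a$ stays bounded, so the quadratic inequality $a^2\geq 2\log(1/\overline{p})\,(l-1+a)$ that you would need fails for all large $l$; it holds only when $a\geq \log(1/\overline{p})+\sqrt{\log(1/\overline{p})^2+2(l-1)\log(1/\overline{p})}$, i.e.\ when the excess is of order $\sqrt{l\log(1/\overline{p})}$. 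Your Hoeffding alternative with $t=a$ has exponent $2a^2/M$ and fails for the same reason: one standard deviation of the success count is of order $\sqrt{M}$, so a constant excess over $l-1$ cannot buy a fixed confidence level.

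In fact the lemma as stated is false, which is why neither route closes. Take $p_i=p=1/2$ for all $i$, $\overline{p}=e^{-1}$, and $M=2l+2$, which meets the hypothesis with equality. The success count is $\mathrm{Bin}(2l+2,1/2)$ with mean $l+1$, and $\Pr[\mathrm{Bin}(2l+2,1/2)\geq l]=\tfrac{1}{2}+\Theta(1/\sqrt{l})\to\tfrac{1}{2}<1-e^{-1}$. The paper's own proof hits the same obstruction in disguise: it correctly derives $2\tau^2/M\leq\tau$ from $M\geq 2\tau$, but its final step $\exp(-2\tau^2/M)\leq\overline{p}=e^{-\tau}$ would require the reverse inequality $2\tau^2/M\geq\tau$, i.e.\ $M\leq 2\tau$. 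A correct version of the lemma needs the lower bound on $M$ to include an $l$- and $M$-dependent slack, e.g.\ $(1-p)M-(l-1)\geq\sqrt{M\log(1/\overline{p})/2}$ (Hoeffding) or $a\geq\log(1/\overline{p})+\sqrt{\log(1/\overline{p})^2+2(l-1)\log(1/\overline{p})}$ (your Chernoff route); under such a hypothesis your argument would go through.
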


\begin{proof}
The number of variables that take value $0$ is $\sum_{i=1}^{M}(1-\xi_i)$, which is a  sum of independent random variables, each taking values in $\{0,1\}$. 
The mean is $\mu = \sum_{i=1}^{M}p_i \leq p M$. By Hoeffding's inequality
\begin{equation}\label{eqn:ProofDistrGoodIterates}
\textup{Pr}\left[\sum_{i=1}^{M}\xi_i \leq (1-p)M - \tau\right] = \textup{Pr}\left[\sum_{i=1}^{M}(1-\xi_i) \geq pM + \tau\right] \leq \textup{Pr}\left[\sum_{i=1}^{M}(1-\xi_i) \geq \mu + \tau\right] \leq \exp \left(-2\tau^2/M\right).
\end{equation}
Let $\tau = \log(1/\overline{p})$. From our assumption that $M\geq 2\log(1/\overline{p}) + (l-1+\log(1/\overline{p}))/(1-p)$ we know that 
$M\geq 2\tau$ and that $M\geq \frac{l-1+\tau}{1-p}$. It follows that $2\tau^2/M \leq \log(1/\overline{p})$ and that $(1-p)M-\tau \geq l-1$. Therefore
\begin{equation}
\textup{Pr}\left[\sum_{i=1}^{M}\xi_i \leq l-1\right] \leq \textup{Pr}\left[\sum_{i=1}^{M}\xi_i \leq (1-p)M-\tau\right]\leq \exp(-2\tau^2/M) \leq \overline{p}.
\end{equation}
\end{proof}

We are now ready to provide bound on number of iterations performed by Algorithm~\ref{algo:AGFW-P} until the stopping criteria (defined in Step 7 of the algorithm) is satisfied. Lemmas~\ref{lemma:ConvProb-OL} and~\ref{lemma:ConvProb-CL} provide a bound on number of iterations performed by Algorithm~\ref{algo:AGFW-P} when $\delta_t$ is set according to scheduled strategy~\ref{Strat1} and adaptive strategy~\ref{Strat2} respectively.

\begin{lemma}\label{lemma:ConvProb-OL}
    Let $0<\epsilon\leq \theta+R_g$ and let $\delta_t$ be a non-negative sequence that converges to $0$. Let $K_q$ be the smallest positive integer such that $\delta_t \leq \epsilon/2$ for all $t\geq K_q$. Let $0< \overline{p}<1$. Let 
    \[ K_r = \lfloor 10.6\Delta_0\rfloor\quad\textup{and}\quad K_s = \left\lceil \frac{48(\theta+R_g)^2}{\epsilon}\right\rceil.\]
    Then, with probability at least $1-\overline{p}$, Algorithm~\ref{algo:AGFW-P} (with~\eqref{eqn:SC-gendef} as input) stops after at most $K_r+(l+1)K_s+2K_q+\frac{l-1+\log(1/\overline{p})}{1-p}+2\log(1/\overline{p})$ iterations.
\end{lemma}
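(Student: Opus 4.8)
The plan is to run the argument of Lemma~\ref{lemma:ConvDet-OL} essentially unchanged on the $r$- and $s$-subsequences, and to handle the (now randomised) $q$-subsequence with a concentration bound, exploiting the coupling described before Lemma~\ref{lem:key-ineq-p} in which the iterates of Algorithm~\ref{algo:AGFW-P} are the iterates of Algorithm~\ref{algo:AGFW} run with the random sequence $\hat\delta_t = b_t\delta_t$. Set $N := K_r + (l+1)K_s + 2K_q + M$ with $M := \lceil \frac{l-1+\log(1/\overline{p})}{1-p} + 2\log(1/\overline{p})\rceil$, partition the first $N$ iterates into the $r$-, $s$-, $q$-subsequences as in Definition~\ref{list:SeqIterates} with $C = 1/2$, and let $E$ be the event that fewer than $l$ of those $N$ iterates satisfy the stopping test $G_t^a \le \epsilon$ and $\delta_t \le 3\epsilon/2$. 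On the complement of $E$ the counter reaches $l$ within $N$ iterations, so it suffices to prove $\Pr[E] \le \overline{p}$.

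First I would use the deterministic bounds of Lemma~\ref{lem:key-ineq-p}. Its first bullet gives $N_r \le K_r$. Next I claim $N_s \ge (l+1)K_s + K_q$ already forces at least $l$ good iterates (so $E$ fails): for $m = 1,\dots,l$ apply the third bullet of Lemma~\ref{lem:key-ineq-p} to the block of $s$-indices $W_m = \{mK_s+K_q+1,\dots,(m+1)K_s+K_q\}$, i.e.\ with $l' = mK_s+K_q$ and $j = (m+1)K_s+K_q-1$; the hypothesis $l' \ge \lfloor j/2+1\rfloor$ holds since $(m-1)K_s + (K_q-1) \ge 0$. Because $l'+2 \ge K_s+2$, $j-l' = K_s-1$, and $K_s \ge 48$ (as $\epsilon \le \theta+R_g$ forces $K_s \ge 48(\theta+R_g)^2/\epsilon \ge 48$), we get $\sqrt{(l'+2)(j-l')} \ge \sqrt{(K_s+2)(K_s-1)} \ge K_s \ge 48(\theta+R_g)^2/\epsilon$, hence $\min_{k\in W_m} G_{s_k}^a \le \frac{24(\theta+R_g)^2}{K_s} \le \epsilon/2$; and every $k \in W_m$ has $k \ge K_q$, so $\delta_{s_k} \le \epsilon/2$. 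The blocks $W_1,\dots,W_l$ are disjoint, yielding $l$ distinct good iterates. Consequently, on $E$ we have $N_s \le (l+1)K_s + K_q - 1$, and therefore $N_q = N - N_r - N_s \ge K_q + M + 1$.

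The $q$-subsequence is where the randomness enters. Set $\xi_k := \mathbbm{1}[\tfrac{1}{2C}\delta_{q_k} - G_{q_k}^a \ge 0]$; by the fourth bullet of Lemma~\ref{lem:key-ineq-p} (with $C = 1/2$, so $\tfrac{1}{2C} = 1$) the $\xi_k$ are independent and each equals $1$ with probability at least $1-p$. For $k \ge K_q+1$ we have $q_k \ge K_q$, hence $\delta_{q_k} \le \epsilon/2$, and so $\xi_k = 1$ implies $G_{q_k}^a \le \delta_{q_k} \le \epsilon/2 \le \epsilon$ and $\delta_{q_k} \le 3\epsilon/2$, i.e.\ iterate $q_k$ is good. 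On $E$, since $N_q \ge K_q + M + 1$, the iterations $q_{K_q+1},\dots,q_{K_q+M}$ are all genuinely performed and fewer than $l$ of them are good, so fewer than $l$ of $\xi_{K_q+1},\dots,\xi_{K_q+M}$ equal $1$. Thus $E$ is contained in the event that at most $l-1$ of the $M$ independent trials $\xi_{K_q+1},\dots,\xi_{K_q+M}$ succeed, and since $M \ge \frac{l-1+\log(1/\overline{p})}{1-p} + 2\log(1/\overline{p})$, Lemma~\ref{lemma:ProbGoodIterates} bounds this probability by $\overline{p}$. Hence $\Pr[E] \le \overline{p}$, which completes the proof.

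The step I expect to be most delicate is the measurability bookkeeping in the last paragraph: the subsequence decomposition, and in particular which iterations land in the $q$-subsequence, is itself a random object, whereas Lemma~\ref{lemma:ProbGoodIterates} applies to a \emph{fixed} family of independent variables. The argument finesses this by only ever reducing to the fixed block $\xi_{K_q+1},\dots,\xi_{K_q+M}$ of the independent sequence guaranteed by Lemma~\ref{lem:key-ineq-p}, and showing $E \subseteq \{\text{at most } l-1 \text{ of these equal } 1\}$ without using the realised values of the $\xi_k$ to select the block. I would want to check carefully that Lemma~\ref{lem:key-ineq-p}'s independence claim really covers $\xi_k$ for every index $k$ in the deterministic range we invoke (this should follow from regarding the coupled idealised process as running indefinitely), and that the off-by-one relation between a subsequence index $k$ and the iteration number $q_k$ is tracked consistently, so that $k \ge K_q+1$ indeed guarantees $\delta_{q_k} \le \epsilon/2$.
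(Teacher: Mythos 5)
Your proof is correct and follows essentially the same route as the paper's: the same choice of $N$, the same partition with $C=1/2$, Lemma~\ref{lem:key-ineq-p} to control the $r$- and $s$-subsequences deterministically, and Lemma~\ref{lemma:ProbGoodIterates} applied to a length-$M$ block of the $q$-subsequence. Your event-containment formulation ($E\subseteq\{\sum_{k=K_q+1}^{K_q+M}\xi_k\leq l-1\}$ for a fixed index block) and your pairwise-disjoint $s$-blocks are marginally cleaner than the paper's case split, but the substance is identical.
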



\begin{proof}
For brevity of notation, define $M(l,\overline{p},p) := \frac{l-1\log(1/\overline{p})}{1-p} + 2\log(1/\overline{p})$. 
Let $N = K_r+(l+1)K_s+2K_q+M(l,\overline{p},p)$ and let the first $N$ iterations of Algorithm~\ref{algo:AGFW} be partitioned into the $r$- $s$- and $q$- subsequences according to Definition~\ref{list:SeqIterates} with $C=1/2$. Then, $N = N_r+N_s+N_q$.

From Lemma~\ref{lem:key-ineq-p}, we know that  $N_r \leq K_r$, and so it follows that $N_s +N_q \geq ((l+1)K_s+K_q)+\left(K_q+M(l,\overline{p},p)\right)$. We now consider two cases: (i) $N_s \geq (l+1)K_s+K_q+1$, and (ii) $N_s \leq (l+1)K_s+K_q$. In the first case, the $s$-subseqence will be long enough to establish that the algorithm reaches the stopping criterion. In the second case, the $q$-subsequence will be long enough to ensure that the algorithm reaches the stopping criterion with probability at least $1-\overline{p}$.

\textbf{Case 1: $N_s \geq  (l+1)K_s+K_q+1$.} 
For $u=1,2,\ldots,l$, let 
\[ k_u^* = \argmin_{i\in \{uK_s+K_q+1,\ldots,(u+1)K_s+K_q+1\}} G_{s_i}^a.\]
Since $N_s\geq (l+1)K_s+K_q+1$ it follows that $s_{k_u^*} \leq s_{(l+1)K_s+K_q+1} \leq s_{N_s} \leq N$, for all $u=1,2,\ldots,l$, as required. We now establish bounds on $\delta_{s_{k_u^*}}, G_{s_{k_u^*}}^a$, for $u=1,2,\ldots,l$, showing that at each of these iterations, the stopping criterion is satisfied.
\begin{itemize}
    \item Since $k_u^*\geq K_q$, it follows that $s_{k_u^*} \geq s_{K_q} \geq K_q$, and so that $\delta_{s_{k_u^*}}\leq \epsilon/2$ (by the definition of $K_q$).
    \item By the definition of $k_u^*$, from Lemma~\ref{lem:key-ineq-p} (with $l=uK_s+K_q$ and $j=(u+1)K_s+K_q$) we have that $l\geq \lfloor j/2+1\rfloor$ and so 
\[ G_{s_{k_u^*}}^a \leq \frac{24(\theta+R_g)^2}{\sqrt{(uK_s+K_q+2)K_s}}\leq \epsilon/2.\]
\end{itemize}
It follows that Algorithm~\ref{algo:AGFW-P} terminates at iteration $s_{k_l^*}\leq N$.

\textbf{Case 2: $N_s \leq  (l+1)K_s+K_q$.} Since $N_s \leq  (l+1)K_s+K_q$, we have that $N_q \geq (l+1)K_s+2K_q +M(l,\overline{p},p) - N_s\geq K_q+M(l,\overline{p},p)$. We will show that among the iterates $t\in \{q_{K_q},q_{K_q+1},\ldots,q_{N_q}\}$, the conditions $\delta_t\leq \epsilon/2$ and $G_t^a\leq \epsilon/2$ hold at least $l$ times with probability at least $1-\overline{p}$. This ensures that Algorithm~\ref{algo:AGFW-P} stops with probability at least $1-\overline{p}$ by iterate $q_{N_q} \leq N$. 

Since $q_{K_q}\geq K_q$, it follows (from the definition of $K_q$) that $\delta_{q_k}\leq \epsilon/2$ for $k=K_q,K_q+1,\ldots,N_q$.
The subsequence $q_k$ for $k\in\{K_q,K_q+1,\ldots,N_q\}$, has length at least $M(l,\overline{p},p)$.  
 From Lemma~\ref{lem:key-ineq-p}, we know that $G_{q_k}^a \leq \delta_{q_k} \leq \epsilon/2$ holds with probability at least $1-p$, independently for each $k\in\{K_q,K_q+1,\ldots,N_q\}$. From Lemma~\ref{lemma:ProbGoodIterates}, we can conclude that, with probability at least $1-\overline{p}$, we have that $G_{q_k}^a \leq \epsilon/2$ holds at least $l$ times over the iterates $k\in\{K_q,K_q+1,\ldots,N_q\}$.

\end{proof}

We now consider the case when $\delta_t$ is set via the adaptive strategy~\ref{Strat2}. 
Recall that in the deterministic case, the main idea was that the approximate duality gap decreases geometrically along the $q$-subsequence. In the random setting, this does not quite hold, because the bound $G_{q_k}^a \leq \frac{1}{2C}\delta_{q_k}$ fails to hold with probability at most $p$. However, if the $q$-subsequence is long enough, then this relationship does hold along a sufficiently long subsequence of the $q$-subsequence, so that the stopping criterion is reached (with any desired probability). 

\begin{lemma}\label{lemma:ConvProb-CL}  
    Let $0<\epsilon\leq \theta + R_g$, and let $\delta_t$ be a non-negative sequence such that $\delta_t \leq \epsilon/2 + \min_{\tau<t}G_\tau^a$ for all $t$. Let $0<\overline{p}<1$. Let  
    \[ K_r = \lfloor 10.6\Delta_0\rfloor\quad\textup{and}\quad K_s = \left\lceil \frac{72(\theta+R_g)^2}{\epsilon}\right\rceil\quad\textup{and}\quad K_q = 2+\left\lceil \log_2\left(\frac{\theta+R_g}{\epsilon}\right)\right\rceil.\]
    Then, with probability at least $1-\overline{p}$, Algorithm~\ref{algo:AGFW-P} (with~\eqref{eqn:SC-gendef} as input)
    stops after at most 
    $K_r+(l+1)K_s+\frac{K_q+l-1+\log(1/\overline{p})}{1-p} + 2 \log(1/\overline{p})$ iterations.
\end{lemma}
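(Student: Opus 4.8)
The plan is to follow the skeleton of the proof of Lemma~\ref{lemma:ConvDet-CL}, but to replace its deterministic handling of the $q$-subsequence with a probabilistic version in the spirit of the proof of Lemma~\ref{lemma:ConvProb-OL}. Set $N$ equal to the claimed iteration bound, partition the first $N$ iterates of Algorithm~\ref{algo:AGFW-P} into the $r$-, $s$-, and $q$-subsequences of Definition~\ref{list:SeqIterates} with $C=1$ (the value used for the adaptive strategy), and write $N = N_r+N_s+N_q$. By the first bullet of Lemma~\ref{lem:key-ineq-p}, $N_r\le K_r$, so $N_s+N_q\ge (l+1)K_s+M$, where $M := \frac{K_q+l-1+\log(1/\overline p)}{1-p}+2\log(1/\overline p)$. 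I would then split into Case~1 ($N_s\ge (l+1)K_s$) and Case~2 ($N_s\le (l+1)K_s-1$, hence $N_q\ge M$), and show that in Case~1 the stopping criterion is met $l$ times deterministically and in Case~2 it is met $l$ times with probability at least $1-\overline p$, in both cases by iteration $N$.

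In Case~1 the $s$-subsequence alone does the job, exactly as in Case~1 of the proofs of Lemmas~\ref{lemma:ConvDet-CL} and~\ref{lemma:ConvProb-OL}. For $u=1,\dots,l$ I would set $k_u^\star = \argmin_{i\in\{uK_s+1,\dots,(u+1)K_s\}}G_{s_i}^a$. The adaptive rule bounds $\delta_{s_{k_u^\star}}\le \epsilon/2+\min_{1\le j\le K_s}G_{s_j}^a$, which together with the $s$-subsequence bound of Lemma~\ref{lem:key-ineq-p} (with its parameters $l=0$, $j=K_s-1$) and the definition of $K_s$ gives $\delta_{s_{k_u^\star}}\le 3\epsilon/2$; the same $s$-subsequence bound with parameters $l=uK_s$, $j=(u+1)K_s-1$ (one checks $uK_s\ge\lfloor j/2+1\rfloor$ for all $u\ge1$) gives $G_{s_{k_u^\star}}^a\le 36(\theta+R_g)^2/K_s\le \epsilon/2\le\epsilon$. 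Thus the stopping criterion holds at the $l$ distinct iterates $s_{k_1^\star}<\cdots<s_{k_l^\star}\le s_{N_s}\le N$, so Algorithm~\ref{algo:AGFW-P} stops by iteration $N$.

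Case~2 is where the work lies. The deterministic proof of Lemma~\ref{lemma:ConvDet-CL} uses the recursion $G_{q_k}^a\le \epsilon/4+\tfrac12 G_{q_{k-1}}^a$, but this relies on $G_{q_k}^a\le\delta_{q_k}/2$ holding at \emph{every} $q$-iterate, which fails for \texttt{PLMO} (independently, with probability at most $p$ each, by the last bullet of Lemma~\ref{lem:key-ineq-p}). The fix is to track the running minimum $m_k := \min_{\ell\le k}G_{q_\ell}^a$ rather than $G_{q_k}^a$ itself. At any $q$-iterate $m_k\le m_{k-1}$, while at a \emph{good} iterate $q_k$ (one satisfying $G_{q_k}^a\le\delta_{q_k}/2$) the adaptive rule gives $\delta_{q_k}\le \epsilon/2+m_{k-1}$, hence $m_k\le G_{q_k}^a\le \epsilon/4+m_{k-1}/2$, i.e.\ the excess $m_k-\epsilon/2$ is at most half of $m_{k-1}-\epsilon/2$. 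Since $m_1=G_{q_1}^a\le\theta+R_g$, after $j$ good iterates of $q$-index at least $2$ we get $m_k\le \epsilon/2+2^{-j}(\theta+R_g)$; consequently, at a good iterate $q_k$ preceded by at least $K_q-1$ good iterates of $q$-index at least $2$, the definition of $K_q$ forces $G_{q_k}^a\le \epsilon/2+2^{-K_q}(\theta+R_g)\le 3\epsilon/4\le\epsilon$ and $\delta_{q_k}\le \epsilon+2^{-(K_q-1)}(\theta+R_g)\le 3\epsilon/2$, so the stopping criterion is met there. Hence if at least $K_q+l$ of the first $N_q$ $q$-iterates are good, the stopping criterion is met at the $(K_q{+}1)$-st through $(K_q{+}l)$-th good $q$-iterates — at least $l$ times, all at iterates $\le q_{N_q}\le N$. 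Finally, since $N_q\ge M$ and the good-iterate indicators are independent each with success probability at least $1-p$, Lemma~\ref{lemma:ProbGoodIterates} applied with its parameter ``$l$'' set to $K_q+l$ shows that at least $K_q+l$ of the first $N_q$ $q$-iterates are good with probability at least $1-\overline p$, which completes the argument.

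The main obstacle is precisely this $q$-subsequence analysis: the clean geometric decay of the deterministic proof collapses when \texttt{PLMO} fails, and one must both (i) find a surrogate quantity — the running minimum $m_k$ — whose ``excess over $\epsilon/2$'' is genuinely non-increasing and still contracts at every good step, and (ii) carefully count how many good iterates are needed before the stopping criterion becomes guaranteed (the ``$+K_q$'' offset) so that the constant fed into the tail bound reproduces the exact iteration count $K_r+(l+1)K_s+M$ claimed in the statement. The residual measurability point — that the $q$-subsequence and its length $N_q$ are themselves random — is handled exactly as in the proof of Lemma~\ref{lemma:ConvProb-OL}, by invoking Lemma~\ref{lem:key-ineq-p} (equivalently, the $\hat\delta_t=b_t\delta_t$ viewpoint) for the independence of the good-iterate indicators.
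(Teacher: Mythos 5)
Your proposal is correct and follows essentially the same route as the paper: the same case split on whether the $s$- or $q$-subsequence dominates, the same use of Lemma~\ref{lem:key-ineq-p} to exhibit $l$ stopping iterates along the $s$-subsequence, and the same reduction of the $q$-case to counting $K_q+l$ ``good'' iterates via Lemma~\ref{lemma:ProbGoodIterates}. Your running-minimum recursion $m_k-\epsilon/2\leq (m_{k-1}-\epsilon/2)/2$ is just a repackaging of the paper's geometric decay along its explicitly defined $v$-subsequence of good $q$-iterates, so the difference is purely cosmetic.
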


\begin{proof}
Let $N = K_r+(l+1)K_s+M(K_q,l,\overline{p},p)$ where $M(K_q,l,\overline{p},p) = \frac{K_q+l-1+\log(1/\overline{p})}{1-p} + 2 \log(1/\overline{p})$. Let the first $N$ iterations of Algorithm~\ref{algo:AGFW} be partitioned into the $r$-, $s$-, and $q$-subsequences according to Definition~\ref{list:SeqIterates} with $C=1$. Then $N = N_r+N_s+N_q$. We know that $N_r \leq K_r$ (from~\ref{lem:key-ineq-p}). It follows that $N_s+N_q \geq (l+1)K_s + M(K_q,l,\overline{p},p)$. We now consider two cases: (i) $N_q \leq M(K_q,l,\overline{p},p)-1$, (ii) $N_q\geq  M(K_q,l,\overline{p},p)$.

\textbf{Case 1: $N_q \leq M(K_q,l,\overline{p},p)-1$.} If $N_q \leq M(K_q,l,\overline{p},p)-1$ then $N_s \geq (l+1)K_s+1$. For $u=1,2,\ldots,l$, let 
\[ k_u^* = \argmin_{i\in \{uK_s+1,\ldots,(u+1)K_s+1\}} G_{s_i}^a.\]
Since $N_s \geq (l+1)K_s+1$, it follows that $s_{k_u^*} \leq s_{(l+1)K_s+1} \leq s_{N_s}\leq N$, for all $i=1,2,\ldots,l$, as required. We now establish bounds on $\delta_{s_{k_u^*}}$ and $G^a_{s_{k_u^*}}$, for $u=1,2,\ldots,l$, showing that at each of these $l$ iterations, the stopping criterion is satisfied. 
\begin{itemize} 
\item Since $k^*_u \geq uK_s+1$, from Lemma~\ref{lem:key-ineq-p} (with $l=0$ and $j=uK_s+1$) we have that  
    \[\delta_{s_{k_u^*}} \leq \epsilon/2 + \min_{\tau < s_{k_u^*}} G_\tau^a \leq \epsilon/2 + \min_{k\in \{1,2,\ldots,uK_s\}} G_{s_k}^a \leq \epsilon/2+\frac{72(\theta+R_g)^2}{uK_s} \leq 3\epsilon/2,\]
    for $u=1,2,\ldots,l$.
    \item Applying Lemma~\ref{lem:key-ineq-p} (with $l=K_s$ and $j=2K_s$), 
    we have that $l = j/2<\lfloor j/2+1\rfloor$, and so
    \[ G_{s_{k_1^*}}^a \leq \frac{72(\theta+R_g)^2}{2K_s+1} \leq \epsilon.\]
    For $u=2,3,\ldots,l$, we again apply Lemma~\ref{lem:key-ineq-p} (with $l=uK_s$ and $j=(u+1)K_s$). In these cases, $l\geq \lfloor j/2+1\rfloor$ and so
    \[ G_{s_{k_u^*}}^a \leq \frac{36(\theta+R_g)^2}{\sqrt{(uK_s+2)K_s}} \leq \epsilon\]
    for $u=2,3,\ldots,l$.
    
\end{itemize}

\textbf{Case 2: $N_q \geq M(K_q,l,\overline{p},p)$.}
It is convenient to define a further subsequence of the $q$-subsequence. Let the $v$-subsequence $v_1<v_2<\cdots$ consist of those $t\in\mathbb{N}$ for which $t = q_k$ for some $k$ and $G_t^a \leq \delta_{t}/2$. The $v$-subsequence consists of those indices from the $q$-subseqence at which the \texttt{PLMO} satisfies condition~\ref{Cond2} ($\delta_t$-suboptimality). From Lemma~\ref{lem:key-ineq-p}, we know that the $v$-subsequence can be obtained from the $q$-subsequence by (independently) keeping each element of the $q$-subsequence with some probability that is at least $1-p$. 
Let $N_v$ denote the (random) number of elements of $\{1,2,\ldots,N\}$ that are in the $v$-subsequence. From Lemma~\ref{lemma:ProbGoodIterates}, we know that if $N_q \geq \frac{l+K_q-1+\log(1/\overline{p})}{1-p} + 2\log(1/\overline{p})$ then $N_v \geq l+K_q$ with probability at least $1-\overline{p}$. 

By the same argument as in Case 2 of Lemma~\ref{lemma:ConvDet-CL} (but replacing $q_{(\cdot)}$ with $v_{(\cdot)}$) we have that 
\[ \delta_{v_k} \leq \epsilon/2 + G_{v_{k-1}}^a\]
for $k\geq 2$ and that 
\[ G_{v_k}^a \leq \epsilon/2 + 2^{-k+1}(\theta+R_g)\]
for $k\geq 2$. 
Then, if $k\geq K_q+1$, we have that 
$G_{v_k}^a \leq 3\epsilon/4$ and that $\delta_{v_k} \leq 3\epsilon/2$.
In particular, the stopping criterion of Algorithm~\ref{algo:AGFW-P} holds at iterates $v_{K_q+1},\ldots,v_{K_q+l}$. As long as $v_{K_q+l}\leq N$, the stopping criterion of Algorithm~\ref{algo:AGFW-P} is achieved $l$ times. We have seen that, with probability at least $1-\overline{p}$, we have that $l+K_q \leq N_v$ and so $v_{l+K_q}\leq v_{N_v} \leq N$, as required.  
\end{proof}

\section{Application of Algorithm~\ref{algo:AGFW-P} to~\eqref{prob:SC-SumLogDef}}\label{sec:SC-Barrier}
In this section, we consider the application of Algorithm~\ref{algo:AGFW-P} to generate a near-optimal solution to~\eqref{prob:SC-SumLogDef}. The aim of this section is to work through this example as it covers the study of motivating example introduced in Section~\ref{sec:Intro-Issues}. We show how to implement \texttt{PLMO} using Lanczos method in Section~\ref{sec:PLMO-Implementation}, followed by stating the computational complexity of Algorithm~\ref{algo:AGFW-P} in Section~\ref{sec:TotalConvSCBarrier}, where we explicitly define all the parameters of the problem. Finally, in Section~\ref{sec:Memory-Efficient}, we show that Algorithm~\ref{algo:AGFW-P} can be made memory-efficient (requiring $\mathcal{O}(n+d)$ memory) using memory-efficient techniques proposed by~\citet{shinde2021memory}.
We first restate this problem as the composite optimization problem
\begin{equation}\label{prob:SC-SumLogDefComp}\tag{SC-GMean-Cmp}
\min_{X\in \mathbb{S}^n} -\sum_{i=1}^d \log(\langle A_i,X\rangle) +g(X),
\end{equation}
where $g(X)$ is the indicator function of $\mathcal{S} = \{ X\succeq 0: \textup{Tr}(X) = 1\}$ and $f(\mathcal{B}(X))= -\sum_{i=1}^d \log(\langle A_i,X\rangle)$ is a $d$-logarithmically-homogeneous,
2-self-concordant function defined on $\textup{int}(\mathcal{S})$. In the case of~\eqref{prob:SC-SumLogDefComp}, the linear subproblem~\eqref{prob:LinearMinimizationProblem} takes the form
\begin{equation}\label{prob:SubProb-SCBarrier}
\min_{H\in\mathcal{S}}\langle \mathcal{B}^*(\nabla f(\mathcal{B}(X^t))), H\rangle \equiv \min_{H\in\mathcal{S}}\left\langle -\sum_{i=1}^d \frac{A_i}{\langle A_i,X^t\rangle}, H\right\rangle
\end{equation}
 which is equivalent to solving an eigenvalue problem. Indeed, if $u^{\star}$ is a unit norm eigenvector corresponding to the smallest eigenvalue of $\mathcal{B}^*(\nabla f(\mathcal{B}(X)))$, then $H^\star = u^{\star}u^{\star T}$ is an optimal solution to the linear subproblem~\eqref{prob:SubProb-SCBarrier}. Now, to generate the update direction $H^t$ in Step 6 of Algorithm~\ref{algo:AGFW-P}, we need to find an approximate solution to the linear subproblem or equivalently an approximate solution to the corresponding extreme eigenvalue problem. In this paper, we consider using the Lanczos method (with random initialization) to generate such an approximate solution. In the next subsection, we review an existing convergence result for the Lanczos method and show how it can ultimately be used to 
 implement \texttt{PLMO}. Furthermore, in Lemma~\ref{lemma:LMOImplementationBarrier}, we provide a bound on number of iterations of the Lanczos method needed to implement \texttt{PLMO}. Finally, in Subsection~\ref{sec:TotalConvSCBarrier}, we provide the `inner' iteration complexity, i.e., the computational complexity of each iteration of Algorithm~\ref{algo:AGFW-P} applied to~\eqref{prob:SC-SumLogDefComp}, as well as a bound on the total computational complexity of the algorithm. 

\subsection{Implementing \texttt{PLMO} in Algorithm~\ref{algo:AGFW-P} when applied to~\eqref{prob:SC-SumLogDefComp}}\label{sec:PLMO-Implementation}
The Lanczos method (see, e.g.,~\cite{kuczynski1992estimating}) is an iterative method that can be used to determine the extreme eigenvalues and eigenvectors of a Hermitian matrix. A typical convergence result for the method, when initialized with a uniform random point on the unit sphere, is given in Lemma~\ref{lemma:ConvLanczos}.

\begin{lemma}[Convergence of Lanczos method \cite{kuczynski1992estimating}]\label{lemma:ConvLanczos}
Let $\rho \in (0,1]$ and $p \in (0,1]$. For $J\in \mathbb{S}^n$, let $\|J\|$ denote the largest absolute eigenvalue of $J$. The Lanczos method,
after at most $N=\left\lceil \frac{1}{2} + \frac{1}{\sqrt{\rho}}\log(4n/p^2)\right\rceil$ iterations, generates a unit vector $u\in \mathbb{R}^n$, that satisfies
\begin{equation}\label{eqn:LancProofConv}
 u^T Ju \geq \lambda_{\textup{max}}(J) - \frac{\rho}{8}\|J\|
\end{equation}
with probability at least $1-p$. The computational complexity of the method is $\mathcal{O}(N(n+\textup{mvc}(J)))$, where $\textup{mvc}(J)$ is the matrix-vector multiplication complexity for the matrix $J$. 
\end{lemma}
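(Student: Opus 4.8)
The bound is essentially the one established in~\cite{kuczynski1992estimating}; the plan is to sketch the structure rather than track every constant. The starting point is the variational description of the Lanczos method: after $k$ iterations started at $v$, the largest Ritz value returned equals $\max\{\, w^T J w/(w^T w) : 0\neq w\in\mathcal{K}_k(J,v)\,\}$, where $\mathcal{K}_k(J,v)=\mathrm{span}\{v,Jv,\dots,J^{k-1}v\}$, and the returned unit vector $u$ attains this maximum; hence $u^T J u \geq (p(J)v)^T J(p(J)v)/\|p(J)v\|^2$ for \emph{every} univariate polynomial $p$ with $\deg p\leq k-1$. So first I would (i) choose a good $p$ to convert this into a concrete deterministic error bound expressed through the components of $v$ in an eigenbasis of $J$, and then (ii) control those components probabilistically for a uniformly random start.

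\paragraph{Deterministic step.} Write $J=\sum_i\lambda_i u_i u_i^T$ with $\lambda_1=\lambda_{\max}(J)\geq\lambda_2\geq\dots\geq\lambda_n$, and $v=\sum_i c_i u_i$ with $\sum_i c_i^2=1$. For any $p$,
\[ \lambda_1 - \frac{(p(J)v)^T J(p(J)v)}{\|p(J)v\|^2} = \frac{\sum_{i\geq 2} c_i^2\, p(\lambda_i)^2(\lambda_1-\lambda_i)}{\sum_i c_i^2\, p(\lambda_i)^2} \;\leq\; \frac{\sum_{i\geq 2} c_i^2\, p(\lambda_i)^2(\lambda_1-\lambda_i)}{c_1^2\, p(\lambda_1)^2}. \]
I would then split the sum at a threshold $\epsilon$ of order $\rho\|J\|$: indices with $\lambda_1-\lambda_i\leq\epsilon$ contribute at most $\epsilon$; for the remaining indices take $p$ to be the degree-$(k-1)$ Chebyshev polynomial of the first kind, affinely rescaled so that $[\lambda_n,\lambda_1-\epsilon]$ maps onto $[-1,1]$. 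Then $|p(\lambda_i)|\leq 1$ on every far index, while $p(\lambda_1)=T_{k-1}\!\left(1+\tfrac{2\epsilon}{\lambda_1-\epsilon-\lambda_n}\right)\geq T_{k-1}(1+c\rho)$ because $\lambda_1-\epsilon-\lambda_n\leq\lambda_1-\lambda_n\leq 2\|J\|$; combined with $\lambda_1-\lambda_i\leq 2\|J\|$ and $\sum_i c_i^2\leq 1$ this bounds the far part by $2\|J\|/(c_1^2\, T_{k-1}(1+c\rho)^2)$. Using the standard growth estimate $T_m(1+x)\geq\tfrac12(1+\sqrt{2x})^m$, which behaves like $e^{\Theta(m\sqrt{x})}$ for $x$ in a bounded range, the requirement $\lambda_1-u^T J u\leq\tfrac{\rho}{8}\|J\|$ reduces to an inequality of the shape $k-1\gtrsim\tfrac{1}{\sqrt{\rho}}\log\!\big(1/(c_1^2\rho)\big)$.

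\paragraph{Probabilistic step and conclusion.} It then remains to lower bound $c_1^2$, the squared norm of the projection of the random start onto the leading eigenspace of $J$. For $v$ uniform on the unit sphere of $\mathbb{R}^n$ this quantity follows a Beta distribution, and a routine tail estimate gives $\Pr[c_1^2\leq t]\leq C\sqrt{n\,t}$ for an absolute constant $C$, which is at most $p$ once $t$ is of order $p^2/n$; the estimate only improves when $\lambda_{\max}(J)$ has multiplicity greater than one. Hence with probability at least $1-p$ we have $1/c_1^2=\mathcal{O}(n/p^2)$, and substituting this into the deterministic condition and making the choice of $\epsilon$ and of the absolute constants as in~\cite{kuczynski1992estimating} shows that $N=\lceil\tfrac12+\tfrac{1}{\sqrt{\rho}}\log(4n/p^2)\rceil$ iterations suffice so that $u^T J u\geq\lambda_{\max}(J)-\tfrac{\rho}{8}\|J\|$ with probability at least $1-p$. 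For the complexity claim, each Lanczos iteration performs one matrix-vector multiplication with $J$ (cost $\textup{mvc}(J)$) plus $\mathcal{O}(n)$ arithmetic for the three-term recurrence and (re)orthogonalization, and the final Ritz pair is obtained from the $k\times k$ tridiagonal matrix in $\mathcal{O}(k)$ work plus one $n\times k$ linear combination; summing over $N$ iterations gives $\mathcal{O}(N(n+\textup{mvc}(J)))$.

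\paragraph{Main obstacle.} The analytic core, namely the shifted-Chebyshev construction and its growth estimate, is routine. The delicate points will be the anti-concentration bound for the projection of a random unit vector onto a possibly multi-dimensional leading eigenspace (one must verify that multiplicity does not weaken it), and the bookkeeping of the threshold $\epsilon$ and of absolute constants so that the iteration count comes out exactly as $\lceil\tfrac12+\tfrac{1}{\sqrt{\rho}}\log(4n/p^2)\rceil$ and not merely of that order --- here one follows the precise choices of~\cite{kuczynski1992estimating}.
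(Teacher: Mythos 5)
The paper does not prove this lemma at all: it is imported verbatim (as the bracketed citation indicates) from Kuczy\'nski and Wo\'zniakowski, so there is no in-paper argument to compare against. Your sketch is a faithful reconstruction of the standard proof from that reference: the variational characterization of the largest Ritz value over the Krylov subspace, the shifted-Chebyshev polynomial bound with a threshold of order $\rho\|J\|$ (using $\lambda_1-\lambda_n\leq 2\|J\|$ to express the error in terms of $\|J\|$ rather than the spectral gap), and the Beta-type anti-concentration bound $\Pr[c_1^2\leq t]\lesssim\sqrt{nt}$ for the projection of a uniformly random start onto the leading eigenvector, which yields the $\log(4n/p^2)$ factor. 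The structure is correct and the complexity accounting is right; as you note yourself, the only thing not actually established is that the absolute constants work out to exactly $N=\lceil\tfrac12+\tfrac{1}{\sqrt{\rho}}\log(4n/p^2)\rceil$ with error $\tfrac{\rho}{8}\|J\|$ rather than merely this order of magnitude, and for that one must defer to the precise computation in the cited source, exactly as the paper itself does.
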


Algorithm~\ref{algo:PLMO-Barrier} shows how to use the Lanczos method as the basis for an implementation of \texttt{PLMO}, to generate the update direction in Algorithm~\ref{algo:AGFW-P}. The following result establishes that Algorithm~\ref{algo:PLMO-Barrier} is indeed a valid implementation of \texttt{PLMO}.

\begin{algorithm}[h]
\caption{\texttt{PLMO} for~\eqref{prob:SC-SumLogDefComp}}
\label{algo:PLMO-Barrier}
\textbf{Input}: $\mathcal{B}^{\star}(\nabla f(\mathcal{B}(X^t)))$, $\delta_t$, $X^t$
\textbf{Output}: ($H^t$, $G_t^a$)
\begin{algorithmic}[1] 
\STATE Set $J=-\mathcal{B}^*(\nabla f(\mathcal{B}(X^t)))\succeq 0$.
\STATE Run Lanczos method for $N = \frac{1}{2}+\sqrt{\frac{c\theta}{8\min\{\delta_t,(c-2)\theta\}}}\log(4n/p^2)$ iterations to generate a unit vector $u$.
\STATE Set $H^t = uu^T$.
\STATE Compute $G_t^a(X^t, H^t)$ according to~\eqref{eqn:DefGkApprox}.
\IF {$G_t^a < 0$} \STATE Set $H^t = X^t$ and $G_t^a = 0$.
\ENDIF
\STATE \textbf{return} ($H^t$, $G_t^a$)
\end{algorithmic}
\end{algorithm}

\begin{lemma}\label{lemma:LMOImplementationBarrier}
Algorithm~\ref{algo:PLMO-Barrier} implements \texttt{PLMO} correctly for the problem~\eqref{prob:SC-SumLogDefComp}.
\end{lemma}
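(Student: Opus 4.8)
The plan is to verify, one by one, the two requirements imposed by Definition~\ref{def:PLMO}: that the pair $(H^t,G_t^a)$ returned by Algorithm~\ref{algo:PLMO-Barrier} always satisfies $G_t^a\geq 0$, and that with probability at least $1-p$ it satisfies at least one of conditions~\ref{Cond1} (large gap) and~\ref{Cond2} ($\delta_t$-suboptimality). The engine for the second part is the observation that for~\eqref{prob:SC-SumLogDefComp} the subproblem~\eqref{prob:SubProb-SCBarrier} is an extreme eigenvalue problem, so a Lanczos run with enough iterations solves it to a prescribed \emph{relative} error, which is exactly the regime handled by Lemma~\ref{lemma:LMODef-RelErr}.

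First I would record the structural facts. Since each $A_i\succeq 0$ and $\langle A_i,X^t\rangle>0$ on $\textup{int}(\mathcal{S})$, the matrix $J=-\mathcal{B}^*(\nabla f(\mathcal{B}(X^t)))=\sum_{i=1}^d A_i/\langle A_i,X^t\rangle$ is PSD, so $\|J\|=\lambda_{\textup{max}}(J)$. Because $g$ is the indicator of $\mathcal{S}$ (so $R_g=0$), for any unit vector $v$ we have $F^{\textup{lin}}_{X^t}(vv^T)=-v^TJv$, and $F^{\textup{lin}}_{X^t}(H^\star)=-\lambda_{\textup{max}}(J)$ where $H^\star=u^\star (u^\star)^T$ for $u^\star$ a top eigenvector of $J$; note $vv^T\in\mathcal{S}$, so $vv^T$ is feasible for~\eqref{prob:SC-SumLogDefComp}. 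Hence the Lanczos guarantee~\eqref{eqn:LancProofConv}, namely $u^TJu\geq \lambda_{\textup{max}}(J)-(\rho/8)\|J\|=(1-\rho/8)\lambda_{\textup{max}}(J)$, translates into
\[
F^{\textup{lin}}_{X^t}(H^\star)\leq F^{\textup{lin}}_{X^t}(uu^T)\leq (1-\tau_t)F^{\textup{lin}}_{X^t}(H^\star),\qquad \tau_t=\rho/8,
\]
which is precisely~\eqref{eqn:approxFWLMO-RelErr}. Comparing the iteration count $N$ in Step~2 of Algorithm~\ref{algo:PLMO-Barrier} with that in Lemma~\ref{lemma:ConvLanczos} forces $\rho=8\min\{\delta_t,(c-2)\theta\}/(c\theta)$, hence $\tau_t=\min\{\delta_t,(c-2)\theta\}/(c\theta)$ — exactly the value required by Lemma~\ref{lemma:LMODef-RelErr} (and $\tau_t<1$ since $c>2$). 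Therefore, on the event (probability at least $1-p$) that the Lanczos output $u$ satisfies~\eqref{eqn:LancProofConv}, Lemma~\ref{lemma:LMODef-RelErr} guarantees that the pair $(uu^T,G_t^a)$ formed in Steps~3--4 satisfies at least one of~\ref{Cond1},~\ref{Cond2}.

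It remains to force $G_t^a\geq0$ in all cases. Step~4 computes $G_t^a(X^t,uu^T)$; if it is nonnegative, nothing changes and (on the success event) one of~\ref{Cond1},~\ref{Cond2} still holds. If $G_t^a<0$, Steps~5--6 replace the pair by $(X^t,0)$, so trivially $G_t^a=0\geq0$; on the failure event (probability at most $p$) this is all that is needed. On the success event with $G_t^a(X^t,uu^T)<0$, condition~\ref{Cond1} (which would force $G_t^a>\theta+R_g=\theta\geq1$ by~\eqref{eqn:SCProp5}) cannot have held, so~\ref{Cond2} held for $uu^T$; Lemma~\ref{lem:Gta0} then shows that $X^t$ also satisfies~\ref{Cond2} and that $G_t^a(X^t,X^t)=0$, so the replaced pair $(X^t,0)$ still satisfies~\ref{Cond2}. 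Collecting the cases: Algorithm~\ref{algo:PLMO-Barrier} always returns a pair with $G_t^a\geq0$, and does so satisfying at least one of~\ref{Cond1},~\ref{Cond2} with probability at least $1-p$, matching Definition~\ref{def:PLMO}.

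I do not expect a genuine obstacle here; the only delicate points are the sign bookkeeping in the translation from~\eqref{eqn:LancProofConv} to~\eqref{eqn:approxFWLMO-RelErr} (which rests on $J\succeq0$, so that $\|J\|=\lambda_{\textup{max}}(J)$ and the relative error is measured against the \emph{negative} quantity $F^{\textup{lin}}_{X^t}(H^\star)$), and checking that the "fix-up" in Steps~5--6 preserves the conditional guarantee — which is exactly the content of Lemma~\ref{lem:Gta0}.
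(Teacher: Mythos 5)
Your proposal is correct and follows essentially the same route as the paper's proof: translate the Lanczos guarantee~\eqref{eqn:LancProofConv} into the relative-error bound~\eqref{eqn:approxFWLMO-RelErr} with $\tau_t=\rho/8=\min\{\delta_t,(c-2)\theta\}/(c\theta)$, invoke Lemma~\ref{lemma:LMODef-RelErr} to get one of~\ref{Cond1} or~\ref{Cond2} on the probability-$(1-p)$ success event, and use Lemma~\ref{lem:Gta0} to justify the fix-up $(H^t,G_t^a)\leftarrow(X^t,0)$ when $G_t^a<0$. Your added observations (feasibility of $vv^T$, and that~\ref{Cond1} cannot hold when $G_t^a<0$ so the fix-up must fall under~\ref{Cond2}) are slightly more explicit than the paper's write-up but do not change the argument.
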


\begin{proof}
For $J=-\mathcal{B}^*(\nabla f(\mathcal{B}(X^t)))\succeq 0$, we can rewrite~\eqref{eqn:LancProofConv} as $u^TJu\geq \lambda_{\textup{max}}(J)(1-\rho/8)=\|J\|(1-\rho/8)$. Let $\rho = 8\tau_t = 8\frac{\textup{min}\{\delta_t,(c-2)\theta\}}{c\theta}$ (set according to Lemma~\ref{lemma:LMODef-RelErr}).
From Lemma~\ref{lemma:ConvLanczos}, we see that after $N=\frac{1}{2}+\sqrt{\frac{c\theta}{8\min\{\delta_t,(c-2)\theta\}}}\log(4n/p^2)$ iterations, the Lanczos method generates a unit vector $u$ that satisfies, with probability at least $1-p$,
\begin{equation}\label{eqn:AltProof-SCB}
u^TJu \geq \|J\| \left(1-\frac{\min\{\delta_t,(c-2)\theta\}}{c\theta}\right) = \|J\|(1-\tau_t),
\end{equation}
where the last equality follows from the definition of $\tau_t$.

Furthermore, since $H^t = uu^t$, and $J=-\mathcal{B}^*(\nabla f(\mathcal{B}(X^t)))$, we can rewrite~\eqref{eqn:AltProof-SCB} as
\begin{equation}\label{eqn:AltProof-SCB1}
-\langle J, H^t\rangle = F^{\textup{lin}}_{X^t}(H^t) \leq -\|J\|(1-\tau_t) = (1-\tau_t)F^{\textup{lin}}_{X^t}(H^\star),
\end{equation}
where the last equality follows since $\|J\| =  -\lambda_{\textup{min}}(\mathcal{B}^*(\nabla f_t)) = -F^{\textup{lin}}_{X^t}(H^\star)$. So, from Lemma~\ref{lemma:LMODef-RelErr}, we see that, if~\eqref{eqn:AltProof-SCB1} holds, then at least one of the two conditions~\ref{Cond1} (large gap) and~\ref{Cond2} ($\delta_t$-suboptimality) is satisfied. And since~\eqref{eqn:AltProof-SCB1} holds with probability at least $1-p$, we have that after at most $N$ iterations of Lanczos method, the generated pair $(H^t,G_t^a)$ satisfy at least one of the two conditions~\ref{Cond1} (large gap) and~\ref{Cond2} ($\delta_t$-suboptimality) with probability at least $1-p$.

Finally, if condition~\ref{Cond2} ($\delta_t$-suboptimality) is satisfied and yet $G_t^a<0$ then, from Lemma~\ref{lem:Gta0}, we know that $H^t = X^t$ also satisfies condition~\ref{Cond2} ($\delta_t$-suboptimality) with corresponding value of $G_t^a$ equal to $0$.
\end{proof}

We are now ready to provide the convergence analysis and the total computational complexity of Algorithm~\ref{algo:AGFW-P} when applied to~\eqref{prob:SC-SumLogDefComp} in the next subsection.

\subsection{Computational complexity of Algorithm~\ref{algo:AGFW-P} for~\eqref{prob:SC-SumLogDef}}\label{sec:TotalConvSCBarrier}
In this section, we provide the computational complexity of Algorithm~\ref{algo:AGFW-P} when applied to~\eqref{prob:SC-SumLogDef}.
The total computational complexity of Algorithm~\ref{algo:AGFW-P} consists of the `inner' iteration complexity, i.e., the computational complexity of each iteration of Algorithm~\ref{algo:AGFW-P} multiplied by the number of iterations performed by the algorithm.
The two most computationally expensive steps in Algorithm~\ref{algo:AGFW-P} are (i) Step 10 (computing $D_t$), 
and (ii) Step 6, implementing \texttt{PLMO}, whose computational complexity can be determined using Lemmas~\ref{lemma:ConvLanczos} and~\ref{lemma:LMOImplementationBarrier}. We elaborate on this complexity result in the proof of Lemma~\ref{lemma:TotalConvResultSCBarrier}.

Lemmas~\ref{lemma:ConvProb-OL} and~\ref{lemma:ConvProb-CL} (from Section~\ref{sec:ProbLMO}) provide a bound on the number of iterations performed by Algorithm~\ref{algo:AGFW-P} when the scheduled strategy~\ref{Strat1} and the adaptive strategy~\ref{Strat2} are used, respectively. Note that the bounds given in these lemmas depend on the parameters $\theta$, $R_g$ and $\Delta_0$. We first compute a bound on these quantities and finally provide the total computational complexity of Algorithm~\ref{algo:AGFW-P} in Lemma~\ref{lemma:TotalConvResultSCBarrier}.

\paragraph{Values of parameters $\theta$, $R_g$ and $\Delta_0$.} Since $f(\mathcal{B}(X))$ in~\eqref{prob:SC-SumLogDefComp} is a $d$-logarithmically-homogeneous function, we have $\theta = d$. Also, since $g(\cdot)$ is an indicator function for the set $\mathcal{S}$ and Algorithm~\ref{algo:AGFW-P} generates feasible updates to the decision variable, we have $R_g = 0$.

Finally, we determine an upper bound on $\Delta_0$. The initial optimality gap, $\Delta_0$ is defined as \begin{equation}
\Delta_0 =  f(\mathcal{B}(X^0)) - f(\mathcal{B}(X^{\star})) + g(X^0) - g(X^{\star}) = f(\mathcal{B}(X^0)) - f(\mathcal{B}(X^{\star})),
\end{equation}
where the last equality holds since 
$X^{\star}\in \mathcal{S}$ and $X^0\in \mathcal{S}$. Now let $X^0 = I/n$, so that $f(\mathcal{B}(I/n)) = -\sum_{i=1}^d \log(\textup{Tr}(A_i)) + d\log(n)$.
To generate a lower bound on $f(\mathcal{B}(X^{\star}))$, we use the knowledge of the dual function. The dual of~\eqref{prob:SC-SumLogDef} is
\begin{equation}\label{prob:SCLogDual}
\max_{y, \lambda}\ -\lambda + \sum_{i=1}^d (\log(y_i) + 1)\ \ \textup{s.t.}\ \begin{cases}
& \sum_{i=1}^d A_i y_i \preceq \lambda I\\
&y_i \geq 0,\quad i = 1,\dotsc, d.
\end{cases} 
\end{equation}
Let $\overline{\lambda} = d$ and $\overline{y}=1/\textup{Tr}(A_i)$, for $i = 1,\dotsc,d$. Note that, $(\overline{\lambda}$, $\overline{y})$ is a feasible solution to the dual problem~\eqref{prob:SCLogDual}. Therefore,
\begin{equation}
f(\mathcal{B}(X^{\star})) \geq -\overline{\lambda} + \sum_{i=1}^d (\log(\overline{y}_i) +1) = -\sum_{i=1}^d \log(\textup{Tr}(A_i)).
\end{equation}
The optimality gap $\Delta_0$ can now be bounded by
\begin{equation}\label{eqn:BoundDelta0}
\Delta_0 =  f(\mathcal{B}(X^0)) - f(\mathcal{B}(X^{\star})) \leq d\log(n).
\end{equation}

We now provide the convergence analysis of Algorithm~\ref{algo:AGFW-P} in Lemma~\ref{lemma:TotalConvResultSCBarrier}.

\begin{lemma}\label{lemma:TotalConvResultSCBarrier}
Let~\eqref{prob:SC-SumLogDefComp} be solved using Algorithm~\ref{algo:AGFW-P} with parameters $l$, $\epsilon$ and $p$, such that \texttt{PLMO} fails with probability at most $p$.
\begin{enumerate}
\item \textbf{Outer iteration complexity when scheduled strategy~\ref{Strat1} with $K_q = 1$ is used.} In this case, with probability at least $1-\overline{p}$, Algorithm~\ref{algo:AGFW-P} stops after at most \[\overline{K} =  \lceil 10.6d\log n \rceil +(l+1)\left\lceil \frac{48d^2}{\epsilon}\right\rceil+2+\frac{l-1+\log(1/\overline{p})}{1-p}+2\log(1/\overline{p})
\]
iterations.
\item \textbf{Inner iteration complexity when scheduled strategy~\ref{Strat1} is used.} When scheduled strategy~\ref{Strat1} with $K_q = 1$ and $\delta_t = \epsilon/2$ for all $t\geq 1$ is used in Algorithm~\ref{algo:AGFW-P}, and Lanczos method is used to implement \texttt{PLMO} in each iteration of the algorithm, then the total computational complexity of each iteration of Algorithm~\ref{algo:AGFW-P} is bounded by $\mathcal{O}\left(  \sqrt{\frac{d}{\epsilon}}\log (4n/p^2)(n+\sum_{i=1}^d\textup{mvc}(A_i))\right)$, where $\textup{mvc}(A_i)$ denotes the complexity of matrix-vector multiplication with $A_i$.
\item \textbf{Total computational complexity when scheduled strategy~\ref{Strat1} is used.} When scheduled strategy~\ref{Strat1} with $K_q = 1$ and $\delta_t = \epsilon/2$ for all $t\geq 1$ is used in Algorithm~\ref{algo:AGFW-P}, the total computational complexity of the algorithm is bounded by $\mathcal{O}\left(\sqrt{\frac{d}{\epsilon}}\log n \left(d\log n+\frac{ld^2}{\epsilon}\right)\left( n+\sum_{i=1}^d\textup{mvc}(A_i)\right)\right)$.
\end{enumerate}
\end{lemma}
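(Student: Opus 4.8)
The plan is to prove the three parts in order: Part~1 by instantiating Lemma~\ref{lemma:ConvProb-OL} with the parameter values $\theta=d$, $R_g=0$, $\Delta_0\le d\log n$ collected just before the lemma statement; Part~2 by inspecting Algorithm~\ref{algo:AGFW-P} step-by-step and invoking Lemma~\ref{lemma:ConvLanczos} (via Lemma~\ref{lemma:LMOImplementationBarrier} and Algorithm~\ref{algo:PLMO-Barrier}) for the cost of Step~6; and Part~3 by multiplying the bounds from Parts~1 and~2 and absorbing lower-order terms. For Part~1, since $f$ in~\eqref{prob:SC-SumLogDefComp} is $d$-logarithmically-homogeneous we have $\theta=d$; since $g$ is an indicator function and Algorithm~\ref{algo:AGFW-P} keeps all iterates feasible, $R_g=0$; and $\Delta_0\le d\log n$ by~\eqref{eqn:BoundDelta0} (using $X^0=I/n$ and the dual-feasible point $(\overline\lambda,\overline y)$). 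Taking $\delta_t=\epsilon/2$ for all $t$ gives $K_q=1$, so $K_r=\lfloor 10.6\Delta_0\rfloor\le \lceil 10.6\,d\log n\rceil$ and $K_s=\lceil 48(\theta+R_g)^2/\epsilon\rceil=\lceil 48d^2/\epsilon\rceil$; substituting these and $2K_q=2$ into the bound $K_r+(l+1)K_s+2K_q+\tfrac{l-1+\log(1/\overline p)}{1-p}+2\log(1/\overline p)$ of Lemma~\ref{lemma:ConvProb-OL} yields exactly $\overline K$.

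For Part~2, I would argue the two dominant costs are Step~6 (the \texttt{PLMO} call implemented by Algorithm~\ref{algo:PLMO-Barrier}) and Step~10 (computing $D_t$). In Step~6, fixing a constant $c$ (say $c=3$, admissible in Lemma~\ref{lemma:LMODef-RelErr} since $c>2$, and giving $\min\{\delta_t,(c-2)\theta\}=\epsilon/2$ because $\epsilon\le\theta=d$), the choice of $N$ in Algorithm~\ref{algo:PLMO-Barrier} together with Lemma~\ref{lemma:ConvLanczos} shows the Lanczos method runs for $N=\tfrac12+\sqrt{cd/(4\epsilon)}\log(4n/p^2)=\mathcal{O}(\sqrt{d/\epsilon}\log(4n/p^2))$ iterations, each costing $\mathcal{O}(n+\textup{mvc}(J))$ for $J=-\mathcal{B}^*(\nabla f(\mathcal{B}(X^t)))=\sum_{i=1}^d A_i/\langle A_i,X^t\rangle$. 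Since the scalars $\langle A_i,X^t\rangle$ can be maintained across iterations (they update as convex combinations under $X^{t+1}=(1-\gamma_t)X^t+\gamma_t H^t$, reusing the values $u^T A_i u=\langle A_i,H^t\rangle$), we get $\textup{mvc}(J)=\mathcal{O}(\sum_i\textup{mvc}(A_i))$, so Step~6 costs $\mathcal{O}\!\big(\sqrt{d/\epsilon}\log(4n/p^2)(n+\sum_i\textup{mvc}(A_i))\big)$. In Step~10 the Hessian of $-\sum_i\log(w_i)$ at $w=\mathcal{B}(X^t)$ is diagonal, so $D_t^2=\sum_{i=1}^d\big(\langle A_i,H^t-X^t\rangle/\langle A_i,X^t\rangle\big)^2$, computable in $\mathcal{O}(n+\sum_i\textup{mvc}(A_i))$ given the maintained scalars; likewise forming $\mathcal{B}^*(\nabla f(\mathcal{B}(X^t)))$ and $G_t^a$ in Step~4 of Algorithm~\ref{algo:PLMO-Barrier} is no more expensive. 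Hence Step~6 dominates and the per-iteration cost is as stated.

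For Part~3, I would multiply the iteration bound $\overline K$ from Part~1 by the per-iteration cost from Part~2. Treating $p$ and $\overline p$ as constants gives $\log(4n/p^2)=\mathcal{O}(\log n)$ and $\tfrac{l-1+\log(1/\overline p)}{1-p}+2\log(1/\overline p)=\mathcal{O}(l)$; using $d\ge1$ (from $\theta\ge1$ in Proposition~\ref{prop:SCLHProperties}) and $\epsilon\le d$, the term $\mathcal{O}(l)$ is absorbed into $\mathcal{O}(ld^2/\epsilon)$, so $\overline K=\mathcal{O}(d\log n+ld^2/\epsilon)$. Multiplying by the per-iteration cost $\mathcal{O}\!\big(\sqrt{d/\epsilon}\log n\,(n+\sum_i\textup{mvc}(A_i))\big)$ gives the claimed bound $\mathcal{O}\!\big(\sqrt{d/\epsilon}\log n\,(d\log n+ld^2/\epsilon)(n+\sum_i\textup{mvc}(A_i))\big)$.

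The main obstacle I anticipate is the accounting in Part~2: one must justify that the inner products $\langle A_i,X^t\rangle$ are maintained incrementally rather than recomputed from scratch, and exploit the diagonal structure of the Hessian of $-\sum_i\log(w_i)$ so that neither $D_t$ nor $G_t^a$ requires materializing an $n\times n$ matrix; once these observations are in place, everything else is substitution into Lemmas~\ref{lemma:ConvProb-OL}, \ref{lemma:ConvLanczos}, and~\ref{lemma:LMODef-RelErr} and routine asymptotic bookkeeping.
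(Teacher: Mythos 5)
Your proposal is correct and follows essentially the same route as the paper's proof: Part~1 by substituting $\theta=d$, $R_g=0$, $\Delta_0\le d\log n$ and $K_q=1$ into Lemma~\ref{lemma:ConvProb-OL}; Part~2 by costing the Lanczos-based \texttt{PLMO} via Lemmas~\ref{lemma:ConvLanczos} and~\ref{lemma:LMOImplementationBarrier} together with the incremental maintenance of the scalars $\langle A_i,X^t\rangle$ and the diagonal-Hessian formula for $D_t$; and Part~3 by multiplying the two bounds. The only cosmetic difference is your choice of the constant $c=3$ where the paper uses $c=4$, which does not affect the asymptotic statement.
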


\begin{proof}
\textbf{Outer iteration complexity.}  When scheduled strategy~\ref{Strat1} with $K_q = 1$ is used, we have that $\delta_t \leq \epsilon/2$ for all $t\geq 1$. Substituting $K_q = 1$, and values of parameters  $\theta$, $R_g$ and $\Delta_0$ in Lemma~\ref{lemma:ConvProb-OL}, we see that, with probability at most $1-\overline{p}$, Algorithm~\ref{algo:AGFW-P} stops after at most \[\overline{K} = \lceil 10.6d\log n \rceil +(l+1)\left\lceil \frac{48d^2}{\epsilon}\right\rceil+2+\frac{l-1+\log(1/\overline{p})}{1-p}+2\log(1/\overline{p})\]
iterations. Thus, we have that $\overline{K}$ is bounded by $\mathcal{O}\left(d\log n+\frac{ld^2}{\epsilon} \right)$.

\textbf{Inner iteration complexity when scheduled strategy~\ref{Strat1} is used.} The two most expensive steps in Algorithm~\ref{algo:AGFW-P} are computing $D_t$, and implementing \texttt{PLMO}.
\texttt{PLMO} is implemented using Lanczos method whose computational complexity is given in Lemma~\ref{lemma:ConvLanczos}.
Assume that scheduled strategy~\ref{Strat1} with $K_q = 1$, and $\delta_t = \epsilon/2$ is used for all $t\geq 1$.
From Lemma~\ref{lemma:ConvLanczos}, we see that the computational complexity of the Lanczos method depends on $\textup{mvc}(\mathcal{B}^{\star}(\nabla f(\mathcal{B}(X^t)))$. Note that, $\mathcal{B}^{\star}(\nabla f(\mathcal{B}(X^t)) = -\sum_{i=1}^d\frac{A_i}{\langle A_i,X^t\rangle}$. If $X^0 = \frac{1}{n}I$, then $\langle A_i, X^0\rangle = \textup{Tr}(A_i)$ for all $i=1,\dotsc,d$. Moreover, we have that $X^{t} = (1-\gamma_{t-1})X^{(t-1)} + \gamma_{t-1}H^{(t-1)}$ and $\textup{rank}(H^{(t-1)}) \leq 1$ for all $t\geq 2$. Thus, $\langle A_i, X^t\rangle = \langle A_i, X^{(t-1)}\rangle + u^TA_iu$, where $H^t = uu^T$. And so, if we store $\langle A_i, X^{(t-1)}\rangle$ for all $i$, then we see that the complexity of computing $\langle A_i,X^t\rangle$ for all $i\in 1,\dotsc,d$ is bounded by $\sum_{i=1}^d\textup{mvc}(A_i)$, i.e., the complexity is bounded by the sum of complexities of matrix-vector multiplication with each $A_i$. Finally, from Lemmas~\ref{lemma:ConvLanczos} and~\ref{lemma:LMOImplementationBarrier} (with $c=4$), we have that computational complexity of \texttt{PLMO} at iteration $t$ of the Algorithm~\ref{algo:AGFW-P} is bounded by $\mathcal{O}\left(\sqrt{\frac{d}{\epsilon}}\log (4n/p^2)(n+\sum_{i=1}^d\textup{mvc}(A_i))\right)$. 

Furthermore, recall that $D_t = \|H^t-X^t\|_{\mathcal{B}(X_t)} = \sqrt{\sum_{i=1}^d\frac{\langle A_i, H^t-X^t\rangle^2}{\langle A_i,X^t\rangle^2}}$, and so, the complexity of computing $D_t$ is also bounded by $\mathcal{O}(\sum_{i=1}^d\textup{mvc}(A_i))$.
Thus, the total computational complexity of each iteration of Algorithm~\ref{algo:AGFW-P} is bounded by $\mathcal{O}\left(  \sqrt{\frac{d}{\epsilon}}\log (4n/p^2)(n+\sum_{i=1}^d\textup{mvc}(A_i))\right)$.

\textbf{Total computational complexity when scheduled strategy~\ref{Strat1} is used.} Note that, Algorithm~\ref{algo:AGFW-P} stops after at most $\overline{K}$ iterations with probability at least $1-\overline{p}$. 
Thus, the total computational complexity of Algorithm~\ref{algo:AGFW-P} is 
$\mathcal{O}\left(\left(\sqrt{\frac{d}{\epsilon}}\log (4n/p^2)(n+\sum_{i=1}^d\textup{mvc}(A_i))\right)\overline{K}\right)$ or equivalently, \[\mathcal{O}\left(\sqrt{\frac{d}{\epsilon}}\log n \left(d\log n+\frac{ld^2}{\epsilon}\right)\left( n+\sum_{i=1}^d\textup{mvc}(A_i)\right)\right).\]
\end{proof}

\subsection{Making Algorithm~\ref{algo:AGFW-P} memory-efficient}\label{sec:Memory-Efficient}
One reason to use methods based on the Frank-Wolfe algorithm for semidefinite optimization problems with constraint $\mathcal{S}$, is that they typically can be modified to lead to approaches that do not require explicit storage of a PSD decision variable, and hence can be made memory-efficient. One approach to this is to represent the PSD decision variable $X$ by a Gaussian random vector $z\sim \mathcal{N}(0,X)$, as suggested by~\citet{shinde2021memory}. This is particularly useful in situations where the semidefinite program is used as the input to a rounding scheme that only requires such Gaussian samples. An example of this situation is discussed in Section~\ref{sec:Intro-OptQuadForm}, where an approximation algorithm for maximizing the product of quadratic forms on the unit sphere is obtained by solving~\eqref{prob:SC-SumLogDefComp} to get a matrix $X\in \mathcal{S}$, and then sampling a Gaussian vector $z$ with covariance equal to $X$ and normalizing it, to get a feasible point for the original optimization problem on the sphere.

The Gaussian sampling-based approach takes advantage of two properties of a problem with the form 
\[ \min_{X\in \mathcal{S}} f(\mathcal{B}(X)).\]
The first is the fact that the objective function only depends on $X^t$ via the lower dimensional vector $v^t := \mathcal{B}(X^t)$ with coordinates $v_i^t = \langle A_i,X^t\rangle$ for $i\in [d]$. 
The second key property is the fact that the linearized subproblem is an extreme eigenvalue problem. Therefore it always has a rank one optimal point, and we can implement \texttt{PLMO} so that the output $H^t$ is always rank one.
The Gaussian-sampling based approach then involves making the following modifications to the classical Frank-Wolfe algorithm~\cite[Algorithm~2]{shinde2021memory}:
\begin{itemize}
    \item Initializing the samples with $z^0\sim\mathcal{N}(0,X^0)$, where $X^0$ is a covariance matrix for which the samples $z^0$ can be generated without explicitly storing $X_0$, such as $X^0 = I/n$. 
    \item Modifying \texttt{PLMO} so that it returns the vector $u^t$ rather than the rank one matrix $H^t = (u^t)(u^t)^T$.
    \item Updating the samples via $z^{t+1} = \sqrt{1-\gamma_t}z^t + \sqrt{\gamma_t}\zeta_t u^t$ where the $\zeta_t$ are a sequence of i.i.d.~$\mathcal{N}(0,1)$ random variables.
    \item Updating $v^t := \mathcal{B}(X^t)$ via $v^{t+1}_i = (1-\gamma_t)v^t_i + \gamma_t (u^t)^TA_i u^t$ for $i\in [d]$
    \item Expressing the linearized objective function in terms of $v^t$ via  $\mathcal{B}^\star(\nabla f(\mathcal{B}(X^t))) = \sum_{i=1}^{d}A_i \partial_i f(v^t)$,
    where $\partial_i f$ is the partial derivative of $f$ with respect to the $i$th coordinate.
\end{itemize}
The updates for the modified algorithm only depend on the parameters $A_i$ of the linear map $\mathcal{B}$ via matrix-vector products with the $A_i$. To run the algorithm, only $z$ (a vector of length $n$) and $v$ (a vector of length $d$) need to be stored. If the \texttt{PLMO} is implemented via an algorithm that only requires matrix-vector multiplications with $\mathcal{B}^\star(\nabla f(\mathcal{B}(X^t))$ (such as the power method), then this only requires $v^t$ and access to the $A_i$ via matrix-vector products, and only requires $\mathcal{O}(n)$ storage (in addition to the storage associated with the $A_i$).  


When extending this approach to the setting of generalized Frank-Wolfe, the main additional issue is computing $D_t = \|H^t - X^t\|_{\mathcal{B}(X^t)}$ and $G_t^a$ (and consequently $\gamma_t$). These computations can be done in a memory-efficient way because
\[ D_t = \|H^t - X^t\|_{\mathcal{B}(X^t)} = \sqrt{(w^t-v^t)^T\nabla^2 f(v^t)(w^t-v^t)}\quad\textup{and}\quad G_t^a = \langle v^t-w^t,\nabla f(v^t)\rangle,\]
where $w^t = \mathcal{B}(H^t)$ is the vector with coordinates $w^t_i = (u^t)^T A_i (u^t)$ for $i\in [d]$. Again, computing $G_t^a$ and $D_t$ only requires the vector $v^t$ as well as access to the $A^i$ via matrix-vector multiplications to compute $w^t$.

In the specific case of~\eqref{prob:SC-SumLogDefComp}, we have that $f$ is separable and so 
\begin{equation*}
    \mathcal{B}^\star(\nabla f(v^t)) = -\sum_{i=1}^{d}A_i/v^t_i\quad\textup{and}\quad
    D_t  = \sqrt{\sum_{i=1}^{d} (w_i^t/v_i^t - 1)^2}\quad\text{and}\quad G_t^a = \sum_{i=1}^{d}(w^t_i/v^t_i - 1). 
\end{equation*}

\begin{prop}
Suppose that the power method~\cite{kuczynski1992estimating} 
is used to implement \texttt{PLMO} in Algorithm~\ref{algo:AGFW-P}. 
Then, Algorithm~\ref{algo:AGFW-P}, combined with the Gaussian sampling-based representation of the decision variable, uses $\mathcal{O}(n+d)$ memory (in addition to the memory required to store the input parameters) to generate a zero-mean Gaussian random vector $\widehat{z}_{\epsilon}\sim \mathcal{N}(0,\widehat{X}_{\epsilon})$ such that $\widehat{X}_{\epsilon}$ is an $\epsilon$-optimal solution to the input problem~\eqref{prob:SC-SumLogDefComp}.
\end{prop}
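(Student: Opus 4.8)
The plan is to prove two claims separately: (a) that the random vector returned by the memory-efficient variant is $\mathcal{N}(0,\widehat X_\epsilon)$ with $\widehat X_\epsilon$ an $\epsilon$-optimal solution of~\eqref{prob:SC-SumLogDefComp} (with probability at least $1-p^l$), and (b) that the algorithm uses only $\mathcal{O}(n+d)$ memory beyond that needed to store the $A_i$. For (a) I would assemble three ingredients that are already available. First, the power method with random initialization satisfies a relative-error eigenvector guarantee of the same form as Lemma~\ref{lemma:ConvLanczos} (only the iteration count changes), so by Lemma~\ref{lemma:LMODef-RelErr}, together with the $G_t^a\geq 0$ correction of Lemma~\ref{lem:Gta0}, it is a valid implementation of \texttt{PLMO} for~\eqref{prob:SC-SumLogDefComp}, exactly as in Algorithm~\ref{algo:PLMO-Barrier}. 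Second, the convergence analysis of Algorithm~\ref{algo:AGFW-P} (Lemmas~\ref{lemma:ConvProb-OL} and~\ref{lemma:ConvProb-CL} with Proposition~\ref{prop:OutputAGFW-P}) then shows that, with probability at least $1-p^l$, the algorithm halts at some iterate $X^{T_l}$ with $\Delta_{T_l} = O(\epsilon)$, so after rescaling the suboptimality parameter by a constant we obtain an iterate $\widehat X_\epsilon := X^{T_l}$ that is exactly $\epsilon$-optimal. Third, I would show that the maintained sample $z^t$ is distributed as $\mathcal{N}(0,X^t)$ for every $t$, where $X^t$ is the (never explicitly formed) iterate of Algorithm~\ref{algo:AGFW-P}.

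For this distributional invariant I would argue by induction. The base case uses $X^0 = I/n$, for which $z^0\sim\mathcal{N}(0,I/n)$ is generated coordinate-wise without ever forming $X^0$. For the inductive step, condition on $z^t\sim\mathcal{N}(0,X^t)$ and on the rank-one output $H^t = u^t(u^t)^T$ of \texttt{PLMO}; since $\zeta_t\sim\mathcal{N}(0,1)$ is drawn independently of $z^t$, the vector $z^{t+1} = \sqrt{1-\gamma_t}\,z^t + \sqrt{\gamma_t}\,\zeta_t u^t$ is zero-mean Gaussian with covariance $(1-\gamma_t)X^t + \gamma_t u^t(u^t)^T = (1-\gamma_t)X^t + \gamma_t H^t = X^{t+1}$. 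When the fallback $H^t = X^t$ is triggered (the $G_t^a<0$ branch), we simply keep $z^{t+1} = z^t$ and $v^{t+1}=v^t$, which is consistent with $X^{t+1}=X^t$. Iterating to $t = T_l$ gives $\widehat z_\epsilon := z^{T_l}\sim\mathcal{N}(0,\widehat X_\epsilon)$.

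For the memory bound I would use the two structural facts recorded before the statement: $f$ depends on $X^t$ only through $v^t := \mathcal{B}(X^t)\in\mathbb{R}^d$, and the subproblem is an extreme-eigenvalue problem, so \texttt{PLMO} can always return a rank-one $H^t$. Concretely, the gradient action is $\mathcal{B}^\star(\nabla f(v^t)) = -\sum_{i=1}^d A_i/v^t_i$, which the power method accesses purely through matrix-vector products with the $A_i$, hence uses only $\mathcal{O}(n)$ scratch space and returns $u^t\in\mathbb{R}^n$; then $w^t := \mathcal{B}(H^t)$ with $w^t_i = (u^t)^TA_iu^t$ costs $d$ matrix-vector products and $\mathcal{O}(d)$ storage; and $G_t^a = \sum_i(w^t_i/v^t_i - 1)$, $D_t = \sqrt{\sum_i(w^t_i/v^t_i - 1)^2}$, $\gamma_t$, and the updates $v^{t+1}_i = (1-\gamma_t)v^t_i + \gamma_t w^t_i$ and $z^{t+1} = \sqrt{1-\gamma_t}\,z^t+\sqrt{\gamma_t}\,\zeta_t u^t$ are then explicit. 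Thus the persistent state consists only of $z^t$ ($\mathcal{O}(n)$) and $v^t$ ($\mathcal{O}(d)$), and the per-iteration scratch is $\mathcal{O}(n+d)$, giving $\mathcal{O}(n+d)$ total memory beyond the $A_i$. Combining this with (a) yields the proposition.

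The main obstacle I anticipate is ingredient (a)'s first point: verifying that the power method (rather than the Lanczos method used earlier) still gives a relative-error eigenvector with failure probability at most $p$ of the exact type required by Lemma~\ref{lemma:LMODef-RelErr}, so that Algorithm~\ref{algo:PLMO-Barrier} remains a legitimate \texttt{PLMO}; and checking that no step of the resulting loop ever implicitly needs $X^t$ — in particular that $\mathcal{B}^\star(\nabla f(\mathcal{B}(X^t)))\succeq 0$, which is what lets us phrase the subproblem as a largest-eigenvalue computation, follows from $A_i\succeq 0$ and $v^t_i>0$ on $\textup{int}(\mathcal{S})$. The remaining steps are bookkeeping on top of results already established in the paper.
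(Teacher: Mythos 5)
Your proposal is correct and follows essentially the same route as the paper, which in fact leaves this proposition without a formal proof and relies on the preceding discussion: the persistent state is only $z^t\in\mathbb{R}^n$ and $v^t=\mathcal{B}(X^t)\in\mathbb{R}^d$, all quantities ($G_t^a$, $D_t$, $\gamma_t$, the gradient action) are computed from $v^t$ and matrix-vector products with the $A_i$, and the sample update $z^{t+1}=\sqrt{1-\gamma_t}\,z^t+\sqrt{\gamma_t}\,\zeta_t u^t$ preserves $z^t\sim\mathcal{N}(0,X^t)$ exactly as in your induction. Your explicit treatment of the distributional invariant and of the $\epsilon$-optimality (via Proposition~\ref{prop:OutputAGFW-P}, holding with probability at least $1-p^l$) is more careful than the paper's, and your anticipated obstacle about the power method is resolved by the same reference~\cite{kuczynski1992estimating}, which gives the needed relative-error guarantee with random initialization.
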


\section{Numerical Experiments}\label{sec:NE}
For our experiments, we implemented Algorithm~\ref{algo:AGFW-P} with two different strategies~\ref{Strat1} (scheduled) and~\ref{Strat2} (adaptive) for $\delta_t$, as well as \cite[Algorithm~1]{zhao2022analysis}. We generated two types of random instances of~\eqref{prob:SC-SumLogDef}, and reported the output of the three algorithms described below. 
The aim of our experiments was to check the performance of Algorithm~\ref{algo:AGFW-P}, and investigate how conservative the theoretical results are against the practical observations.

\paragraph{Generating random instances of~\eqref{prob:SC-SumLogDefComp}.}
We generated two types of problem instances of~\eqref{prob:SC-SumLogDefComp}, namely \textsc{Diag}, \textsc{Rnd}.

\begin{itemize}
\item \textbf{\textsc{Diag} problem instances.}  In \textsc{Diag} problem instances, $A_i\in\mathbb{S}^n$ were generated as follows: $[A_i]_{ii} = i$ for $i=1,\dotsc,d$, with other entries in the matrix set to 0. For \textsc{Diag} instances, the optimal solution $X^{\star}$ has a $d\times d$ scaled identity matrix $I/d$ on its principal diagonal and 0 everywhere else. Accordingly, we computed the optimality gap $\Delta_0 = F(X^0) - F(X^{\star})$.
\item \textbf{\textsc{Rnd} problem instances.} \textsc{Rnd} problem instances were generated by generating parameter matrices $A_i$'s for $i=1,\dotsc,d$ such that $A_i = \sum_{j=1}^n u_ju_j^T$ where $u_j \sim \mathcal{N}(0,I)$ for each $j\in [n]$. For these instances, we use $d\log n$ as an upper bound on $\Delta_0$ as derived in Section~\ref{sec:TotalConvSCBarrier}.
\end{itemize}

\paragraph{Experimental Setup.} For our experiments, we implemented the following three algorithms: (a) Algorithm~\ref{algo:AGFW-P} with $K_q =1$, i.e., $\delta_t = \epsilon/2$ for all $t\geq 1$ (\texttt{GFW-ApproxI}), (b) Algorithm~\ref{algo:AGFW} with $\delta_t = \epsilon/2+\min_{k<t}G_k^a$ (\texttt{GFW-ApproxII}), and (c) \cite[Algorithm~1]{zhao2022analysis} (\texttt{GFW-Exact}).
Note that, for~\eqref{prob:SC-SumLogDefComp}, $\theta = d$, and $R_g = 0$ since $g(\cdot)$ is an indicator function for $\mathcal{S}$ and all three algorithms generate iterates feasible for the set $\mathcal{S}$.
The values of other parameters were set as: $\epsilon = 0.05$, $l=1$.
Note that, Algorithm~\ref{algo:AGFW-P}, and thus, \texttt{GFW-ApproxI} and \texttt{GFW-ApproxII}, stop after the bounds, $G_t^a \leq \epsilon$ and $\delta_t\leq 3\epsilon/2$, are satisfied at least $l$ times, whereas \texttt{GFW-Exact} stops if $G_t^a \leq \epsilon$~\cite{zhao2022analysis}. Let $K^u$ be defined as
\begin{equation}\label{eqn:DefKu-NE}
K^u = \begin{cases}
\lceil 10.6\Delta_0\rceil+2\left\lceil \frac{48d^2}{\epsilon}\right\rceil+2+\frac{\log(1/\overline{p})}{1-p}+2\log(1/\overline{p})
\quad \textup{for \texttt{GFW-ApproxI}}\\
\lceil 10.6\Delta_0 \rceil +2\left\lceil \frac{72d^2}{\epsilon}\right\rceil +\frac{2(2+\lceil\log_2(d/\epsilon)\rceil)}{1-p} + 2\log(1/\overline{p}) \quad \textup{for \texttt{GFW-ApproxII}}\\
\left\lceil 5.3(\Delta_0+d)\log(10.6\Delta_0)\right\rceil + \left\lceil \frac{24d^2}{\epsilon} \right\rceil\quad \textup{for \texttt{GFW-Exact}},
\end{cases}
\end{equation}
which follows from Lemma~\ref{lemma:ConvProb-CL}, Lemma~\ref{lemma:TotalConvResultSCBarrier} and~\cite[Theorem~2.1]{zhao2022analysis} respectively. Note that, in~\eqref{eqn:DefKu-NE}, $p$ is an upper bound on the probability of failure of \texttt{PLMO}, and determines its computational complexity (see Lemma~\ref{lemma:TotalConvResultSCBarrier}), while $\overline{p}$ denotes an upper bound on the probability that the algorithm stops after at most $K^u$ iterations. Thus, if $K^u$ is as defined in~\eqref{eqn:DefKu-NE}, we have that with probability at least $1-\overline{p}$, \texttt{GFW-ApproxI} and \texttt{GFW-ApproxII} stops after at most $K^u$ iterations, and with probability 1, \texttt{GFW-Exact} stops after at most $K^u$ iterations. For our experiments, we set $p = \overline{p} = 0.1$.

The computations were performed using MATLAB R2023a on a machine with 8GB RAM. For \texttt{GFW-Exact}, we used \texttt{eig} in MATLAB to implement \texttt{LMO}.
Whereas to implement \texttt{PLMO} in Algorithm~\ref{algo:AGFW-P}, we used \texttt{eigs} in MATLAB instead of Lanczos method as stated in Section~\ref{sec:SC-Barrier}. 

\paragraph{Using \texttt{eigs} in MATLAB.}
For~\eqref{prob:SC-SumLogDefComp}, the update direction $H^t$ generated at each iteration of Algorithm~\ref{algo:AGFW-P} is a rank-1 matrix $H^t = hh^T$ such that $h$ is an approximate minimum eigenvector of $\mathcal{B}^*(\nabla f(\mathcal{B}(X^t)))$.
The command \texttt{eigs} in MATLAB generates an approximate minimum eigenvalue-eigenvector pair $(\lambda,h)$ for the matrix $C= \mathcal{B}^*(\nabla f(\mathcal{B}(X^t)))$ such that $h$ is a unit vector and $\|Ch - \lambda h\| \leq \textup{tol}\times \|C\|$, where $\textup{tol}$ is a user-defined value. In case of \texttt{GFW-ApproxI}, we set $\textup{tol} = 2.22\times 10^{-16}$, which is the default value, at each iteration of the algorithm. While at each iteration $t$ of \texttt{GFW-ApproxII}, we set $\textup{tol} = \frac{\delta_t}{\|C\|}$. 
Note that, for~\eqref{prob:SC-SumLogDefComp}, the optimal objective function value of the subproblem~\eqref{prob:LinearMinimizationProblem} at iteration $t$ is $\|\mathcal{B}^*(\nabla f(\mathcal{B}(X^t)))\|$. We verify that by setting the value of $\textup{tol}$ as given above for both algorithms, \texttt{eigs} returns a unit vector $h$ such that $h^T\mathcal{B}^*(\nabla f(\mathcal{B}(X^t)))h = \|\mathcal{B}^*(\nabla f(\mathcal{B}(X^t)))\| + \eta_t\leq \|\mathcal{B}^*(\nabla f(\mathcal{B}(X^t)))\| + \delta_t$ at each iteration $t$. Thus, \texttt{eigs} returns an update direction $H^t = hh^T$ such that $\eta_t \leq \delta_t$ for all $t\geq 0$.

\paragraph{Results for \textsc{Diag} problem instances.}
The random instances generated were of two sizes (\textit{$n=500$, $d=50,100$}).
For \textsc{Diag} problem instances, we randomly initialized all three algorithms to $X^0 = \sum_{i=1}^n u_ju_j^T$ such that $u_j \sim \mathcal{N}(0,I)$ for each $j\in [n]$. For each problem instance, we performed 10 runs of each algorithm with different random initialization.
Table~\ref{table:SCGSConstEtaDiag} shows the computational results for \texttt{GFW-ApproxI}, \texttt{GFW-ApproxII} and \texttt{GFW-Exact}. The quantity $K$ denotes the number of iterations until the algorithms stop and $K^u$, defined in~\eqref{eqn:DefKu-NE}, denotes an upper bound on $K$. Note that the average is taken over 10 independent runs with different random initializations for each problem instance.

 \begin{table*}[th]
    \centering
    \caption{Computational results of using \texttt{GFW-ApproxI}, \texttt{GFW-ApproxII} and \texttt{GFW-Exact} to generate an $\epsilon$-optimal solution to \textsc{Diag} instances of~\eqref{prob:SC-SumLogDef}. Here, average is taken over 10 independent runs with different random initialization.}
    \label{table:SCGSConstEtaDiag}
    \footnotesize
    \begin{tabular}{|c| c| c| c| c| c| c|}
    \hline
\parbox[c]{0.4cm}{\raggedright $n$} &
\parbox[c]{0.4cm}{\raggedright $d$} &
\parbox[c]{1.1cm}{\raggedright Algorithm} & \parbox[c]{1.5cm}{\raggedright Avg time (secs)} & \parbox[c]{1.0cm}{\raggedright Avg $K^u$ ($\times 10^6$)} &
\parbox[c]{0.9cm}{\raggedright Avg $K$ ($\times 10^4$)} & \parbox[c]{0.8cm}{\raggedright $\sigma[K]$} \\
\hline
500&50&\texttt{GFW-ApproxI}&652.33&4.8&2.948&356.79\\
500&50&\texttt{GFW-ApproxII}&682.33&7.2&3.03&337.07\\
500&50&\texttt{GFW-Exact}&1038.1&1.21&5.054&0.95\\
\hline
500&100&\texttt{GFW-ApproxI}&2374.9&19.2&5.717&398.36\\
500&100&\texttt{GFW-ApproxII}&2548&28.8&5.965&748.17\\
500&100&\texttt{GFW-Exact}&8407.4&4.81&20.12&2.02\\
\hline
\end{tabular}
\end{table*}

From average $K^u$ and average $K$ in Table~\ref{table:SCGSConstEtaDiag}, we see that all three algorithms perform approximately two orders of magnitude better than the theoretical convergence bounds. Also, the standard deviation of $K$ shows small variation over random initialization of the algorithms.
We also see that for both random instances of varying size, \texttt{GFW-ApproxI} and \texttt{GFW-ApproxII} perform lesser number of iterations than \texttt{GFW-Exact} before stopping.
Figure~\ref{fig:OptGap1} shows the change in the optimality gap with number of iterations for the three algorithms when they were used to generate an $\epsilon$-optimal solution to a \textsc{Diag} instance of~\eqref{prob:SC-SumLogDefComp} with $n=500$, $d=50$. 
From the figure, we see that all three algorithms show similar trend in the change of optimality gap at each iteration, and after a few initial iterations, the algorithms converge linearly wrt $t$ to an $\epsilon$-optimal solution. We also observe that \texttt{GFW-ApproxI} and \texttt{GFW-ApproxII} require slightly fewer iterations than \texttt{GFW-Exact} to converge to an $\epsilon$-optimal solution.

\begin{figure}[ht]
\centering
\includegraphics[width=.6\textwidth]{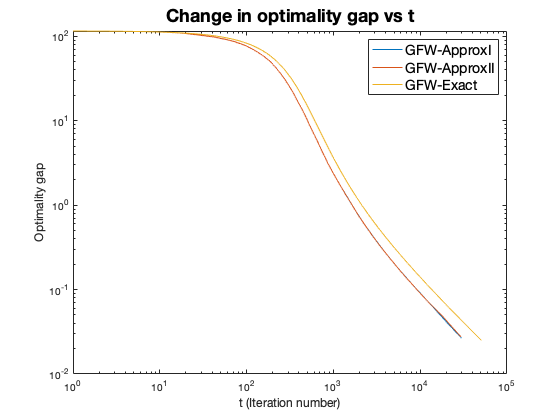}
\caption{Optimality gap vs number of iterations for \textsc{Diag} instance ($n=500$, $d=50$) of~\eqref{prob:SC-SumLogDefComp}.}\label{fig:OptGap1}
\end{figure}

\paragraph{Results for \textsc{Rnd} problem instances.}
The three algorithms were randomly initialized to $X^0$ such that $X^0 = \sum_{j=1}^n u_ju_j^T$ with $u_j\sim \mathcal{N}(0,I)$ for $j\in [n]$. For each problem instance, we performed 10 runs of each algorithm with different random initialization.
The results for \textsc{Rnd} problem instances are given in Table~\ref{table:SCGSConstEtaRnd}. In the table, the quantity $K$ denotes the number of iterations until the algorithms stop and $K^u$, defined in~\eqref{eqn:DefKu-NE}, denotes an upper bound on $K$. The average values in the table arise from the different random initial points during each run of the algorithm. 

 \begin{table*}[th]
    \centering
    \caption{Computational results of using \texttt{GFW-ApproxI}, \texttt{GFW-ApproxII} and \texttt{GFW-Exact} to generate an $\epsilon$-optimal solution to \textsc{Rnd} instances of~\eqref{prob:SC-SumLogDef}.}
    \label{table:SCGSConstEtaRnd}
    \footnotesize
    \begin{tabular}{|c| c| c| c| c| c| c|}
    \hline
\parbox[c]{0.4cm}{\raggedright $n$} &
\parbox[c]{0.4cm}{\raggedright $d$} &
\parbox[c]{1.1cm}{\raggedright Algorithm} & \parbox[c]{1.5cm}{\raggedright Avg time (secs)} & \parbox[c]{1.0cm}{\raggedright $K^u$ ($\times 10^8$)} &
\parbox[c]{0.9cm}{\raggedright Avg $K$} & \parbox[c]{0.8cm}{\raggedright $\sigma[K]$} \\
\hline
200&250&\texttt{GFW-ApproxI}&1.84&1.20&88.7&1.34\\
200&250&\texttt{GFW-ApproxII}&2.46&1.80&89.7&1.34\\
200&250&\texttt{GFW-Exact}&1.94&0.30&88.7&1.34\\
\hline
300&350&\texttt{GFW-ApproxI}&6.93&2.35&111.7&1.83\\
300&350&\texttt{GFW-ApproxII}&6.89&3.52&112.7&1.83\\
300&350&\texttt{GFW-Exact}&6.23&0.58&111.7&1.83\\
\hline
400&450&\texttt{GFW-ApproxI}&14.54&3.88&131.1&1.37\\
400&450&\texttt{GFW-ApproxII}&16.96&5.82&132.1&1.37\\
400&450&\texttt{GFW-Exact}&16.38&0.97&131.1&1.37\\
\hline
400&600&\texttt{GFW-ApproxI}&21.66&6.91&149.7&1.49\\
400&600&\texttt{GFW-ApproxII}&24.92&10.36&150.7&1.49\\
400&600&\texttt{GFW-Exact}&23.79&1.73&149.7&1.49\\
\hline
500&750&\texttt{GFW-ApproxI}&63.7&10.8&172.4&1.5\\
500&750&\texttt{GFW-ApproxII}&52.72&16.2&173.4&1.62\\
500&750&\texttt{GFW-Exact}&71.71&2.70&172.4&1.5\\
\hline
600&900&\texttt{GFW-ApproxI}&152&15.55&193.6&1.17\\
600&900&\texttt{GFW-ApproxII}&142.74&23.32&194.7&1.25\\
600&900&\texttt{GFW-Exact}&122.43&3.89&193.6&1.17\\
\hline
700&750&\texttt{GFW-ApproxI}&170.08&10.8&181.8&1.32\\
700&750&\texttt{GFW-ApproxII}&167.85&16.2&182.8&1.32\\
700&750&\texttt{GFW-Exact}&176.51&2.70&181.8&1.32\\
\hline
\end{tabular}
\end{table*}

From the values of average $K^u$ and average $K$ in Table~\ref{table:SCGSConstEtaRnd}, we see that for each problem instance, all three algorithms satisfiy the stopping criterion after $<500$ iterations.
This value is several orders of magnitude smaller than the theoretical upper bounds given in~\eqref{eqn:DefKu-NE}. Furthermore, the standard deviation of $K$ shows a very small variation over the random initialization of the algorithms. Finally, we also observed that for varying values of $n$ and $d$, all algorithms take the same number of iterations to converge to an $\epsilon$-optimal solution. Thus, for the specific generated random instances, the approximation error in solving the linear subproblem does not affect the convergence of the algorithm. Furthermore, Figure~\ref{fig:DualityGap1} shows the change $G_t^a+\delta_t$ with the number of iterations for the three algorithms (with \textsc{Rnd} instance of size $n=700$, $d=750$ as input) until the stopping criterion is satisfied. The y-axis shows the average of $G_t^a+\delta_t$ taken over 10 runs of the algorithms each with a different random initialization.
Since $\Delta_t \leq G_t^a+\delta_t$, the y-axis denotes the upper bound on the optimality gap. Note that, for \texttt{GFW-ApproxII}, we have $\delta_t = \epsilon/2+\min_{\tau<t}G_{\tau}^a$, and so, for \texttt{GFW-ApproxII}, $G_t^a+\delta_t$ has a value larger than that for \texttt{GFW-ApproxI} and \texttt{GFW-Exact}. Furthermore, all three algorithms show similar trend in the decrease in $G_t^a+\delta_t$, and consequently, $G_t^a$. We also observe that the quantity $G_t^a+\delta_t$ decreases exponentially wrt $t$. Note that, this is a much faster convergence as compared to \textsc{Diag} instances (see Figure~\ref{fig:OptGap1}).

From these experiments (for \textsc{Diag} as well as \textsc{Rnd} instances), we see that our proposed method (Algorithm~\ref{algo:AGFW-P}) has similar performance as Algorithm~\ref{algo:GFW}, i.e., the algorithm with exact linear minimization oracle.

\begin{figure}[ht]
\centering
\includegraphics[width=.6\textwidth]{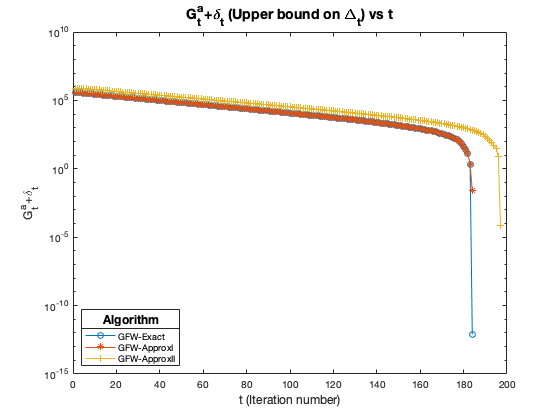}
\caption{Average $G_t^a+ \delta_t$ vs number of iterations ($t$) for \textsc{Rnd} instance ($n=700$, $d=750$) of~\eqref{prob:SC-SumLogDefComp}. Note, since $\Delta_t \leq G_t^a+\delta_t$ with probability at least $1-p^l$, y-axis denotes the upper bound on optimality gap $\Delta_t$.}\label{fig:DualityGap1}
\end{figure}

\section{Discussion}
In this paper, we proposed a first-order algorithm, based on the Frank-Wolfe method, with provable approximation guarantees for composite optimization problems with $\theta$-logarithmically-homogeneous, $M$-self-concordant barrier function in the objective and a convex, compact feasible set. We showed that it is enough to solve the linear subproblem at each iteration approximately making Algorithm~\ref{algo:AGFW} more practical while also reducing the complexity of each iteration of the algorithm. We also showed that Algorithm~\ref{algo:AGFW} can generate an $\epsilon$-optimal solution to~\eqref{eqn:SC-gendef} when the oracle generates an approximate solution with either additive or relative approximation error. 
We analyzed Algorithm~\ref{algo:AGFW} for two different settings of the value of $\delta_t$. However, it is possible that alternate techniques to set the value of $\delta_t$ could result in potentially faster convergence. Since the convergence of the algorithm depends on the initial optimality gap, we can potentially use warm starting techniques to speed up the method.
We further proposed Algorithm~\ref{algo:AGFW-P} which generates an $\epsilon$-optimal solution to~\eqref{eqn:SC-gendef} with high probability when the linear minimization oracle has a nonzero probability of failure.
Finally, we performed numerical experiments to show that the convergence analysis of our method holds true. The results of numerical experiments confirmed that the convergence bounds are satisfied for random instances of~\eqref{prob:SC-SumLogDef}.

\paragraph{Acknowledgements.} This material is based upon work supported by the National Science Foundation under Grant No. DMS-1929284 while the author, Nimita Shinde, was in residence at the Institute for Computational and Experimental Research in Mathematics in Providence, RI, during the Discrete Optimization: Mathematics,  Algorithms, and Computation semester program.

\appendix

\section{Preliminary results}
\begin{lemma}\label{lemma:RelationDtGt}
Let the quantities $R_g$, $G_t^a$ and $D_t$ be defined in~\eqref{eqn:DefRh},~\eqref{eqn:DefGkApprox} and~\eqref{eqn:DefDt} respectively. 
For any $t\geq 0$, if $G_t^a \geq 0$, it holds that
\begin{equation}\label{eqn:RelationDkGk}
D_t \leq G_t^a + \theta + R_g.
\end{equation}
\end{lemma}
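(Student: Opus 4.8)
The plan is to compute $D_t^2$ exactly, rewrite it using the single scalar $\Gamma := -\langle \nabla f(\mathcal{B}(x^t)),\mathcal{B}(h^t)\rangle$, and then close with an elementary quadratic inequality. The naive triangle bound $D_t \le \|\mathcal{B}(h^t)\|_{\mathcal{B}(x^t)} + \|\mathcal{B}(x^t)\|_{\mathcal{B}(x^t)}$ overshoots the target by $\|\mathcal{B}(x^t)\|_{\mathcal{B}(x^t)} = \sqrt\theta$, so keeping the cross term is essential. Write $u = \mathcal{B}(x^t)\in\textup{int}(\mathcal{K})$ and $w = \mathcal{B}(h^t)\in\mathcal{K}$ (both memberships hold because $x^t,h^t$ are feasible for~\eqref{eqn:SC-gendef}), so that $D_t = \|w-u\|_u$.

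First I would record two identities. Using~\eqref{eqn:SCProp3} with the vector $u$ and then~\eqref{eqn:SCProp4}, $\|u\|_u^2 = \langle u,\mathcal{H}(u)u\rangle = -\langle\nabla f(u),u\rangle = \theta$; and using~\eqref{eqn:SCProp3} again, $\langle w,\mathcal{H}(u)u\rangle = -\langle\nabla f(u),w\rangle = \Gamma$. Expanding the square,
\[
D_t^2 = \|w-u\|_u^2 = \|w\|_u^2 - 2\langle w,\mathcal{H}(u)u\rangle + \|u\|_u^2 = \|w\|_u^2 - 2\Gamma + \theta.
\]
Next I would connect $\Gamma$ to $G_t^a$: by the definition~\eqref{eqn:DefGkApprox} and~\eqref{eqn:SCProp4},
\[
G_t^a = \langle\nabla f(u),u-w\rangle + g(x^t) - g(h^t) = -\theta + \Gamma + g(x^t) - g(h^t),
\]
so $\Gamma = G_t^a + \theta + \bigl(g(h^t)-g(x^t)\bigr) \le G_t^a + \theta + R_g =: B$ by the definition~\eqref{eqn:DefRh} of $R_g$. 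Since $w\in\mathcal{K}$, \eqref{eqn:SCProp2} gives $0\le\|w\|_u\le\Gamma$; together with the hypothesis $G_t^a\ge0$ and~\eqref{eqn:SCProp5} this yields $0\le\Gamma\le B$ and $1\le\theta\le B$.

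Finally I would use $\|w\|_u^2\le\Gamma^2$ to get $D_t^2 \le \Gamma^2 - 2\Gamma + \theta$, and then verify $\Gamma^2 - 2\Gamma + \theta \le B^2$, equivalently $\theta - 2\Gamma \le B^2 - \Gamma^2$. If $\Gamma \ge \theta/2$ the left side is $\le 0$ while the right side is $\ge 0$ (as $0\le\Gamma\le B$), so the inequality holds. If $\Gamma < \theta/2$, I would use $B\ge\theta$ to write $B^2-\Gamma^2 \ge \theta^2-\Gamma^2$ and reduce to $\theta(\theta-1) + \Gamma(2-\Gamma) \ge 0$ for $0\le\Gamma<\theta/2$, which is a short case check ($\theta\le4$ versus $\theta>4$) using $\theta\ge1$. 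Taking square roots (both sides nonnegative) gives $D_t \le B = G_t^a + \theta + R_g$. The only mildly delicate point is this last elementary inequality, which is exactly where both hypotheses --- $G_t^a\ge0$ (through $B\ge\theta$) and $\theta\ge1$ --- enter.
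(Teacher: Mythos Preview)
Your proof is correct and follows essentially the same route as the paper: expand $D_t^2$, use~\eqref{eqn:SCProp3}--\eqref{eqn:SCProp4} to identify the cross term and $\|u\|_u^2$, bound $\|w\|_u^2\le\Gamma^2$ via~\eqref{eqn:SCProp2}, and reduce to $D_t^2\le\Gamma^2-2\Gamma+\theta$. The only difference is the final algebra: the paper expands back in terms of $G_t^a$, $\theta$, and $g(h^t)-g(x^t)$ and bounds directly, whereas your case split on $\Gamma\lessgtr\theta/2$ is a bit roundabout --- since $\Gamma\mapsto\Gamma^2-2\Gamma+\theta$ is convex, it suffices to check the endpoints $\Gamma=0$ (giving $\theta\le B\le B^2$ as $B\ge1$) and $\Gamma=B$ (giving $B^2-2B+\theta\le B^2$ as $\theta\le B\le 2B$).
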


\begin{proof}
\citet{zhao2022analysis} state the relationship between $D_t$ and the duality gap $G_t$, which is given as $D_t \leq G_t +\theta +R_g$~\cite[Proposition~2.3]{zhao2022analysis}. 
The proof of~\eqref{eqn:RelationDkGk} is adapted from the proof of~\cite[Proposition~2.3]{zhao2022analysis}.

From the definition of $D_t$, we have
\begin{equation}\label{eqn:DkGkProof1}
\begin{split}
D_t^2 &= \|\mathcal{B}(h^t-x^t)\|_{\mathcal{B}(x^t)}^2\\ &= \langle \mathcal{H}(\mathcal{B}(x^t))\mathcal{B}(h^t), \mathcal{B}(h^t)\rangle - 2\langle \mathcal{H}(\mathcal{B}(x^t))\mathcal{B}(x^t), \mathcal{B}(h^t)\rangle + \langle \mathcal{H}(\mathcal{B}(x^t))\mathcal{B}(x^t), \mathcal{B}(x^t)\rangle.
\end{split}
\end{equation}
We now bound $U^t = \langle \mathcal{H}(\mathcal{B}(x^t))\mathcal{B}(h^t), \mathcal{B}(h^t)\rangle$ as
\begin{equation}\label{eqn:DkGkProof2}
\begin{split}
U^t &\underset{(i)}{\leq} \langle \nabla f(\mathcal{B}(x^t)),\mathcal{B}(h^t)\rangle^2\\
&= [\langle \nabla f(\mathcal{B}(x^t)),\mathcal{B}(h^t-x^t)\rangle + \langle \nabla f(\mathcal{B}(x^t)),\mathcal{B}(x^t)\rangle]^2\\
&\underset{(ii)} {=}[-G_t^a - g(h^t) +g(X^t) +\langle \nabla f(\mathcal{B}(x^t)),\mathcal{B}(x^t)\rangle]^2\\
&\underset{(iii)}{=} [-G_t^a - g(h^t) + g(x^t) - \theta]^2,
\end{split}
\end{equation}
where (i) follows from~\eqref{eqn:SCProp2}, (ii) follows from the definition of $G_t^a$ (given in~\eqref{eqn:DefGkApprox}), and (iii) follows from~\eqref{eqn:SCProp4}.
For simplicity, define $V^t = - 2\langle \mathcal{H}(\mathcal{B}(x^t))\mathcal{B}(x^t), \mathcal{B}(h^t)\rangle + \langle \mathcal{H}(\mathcal{B}(x^t))\mathcal{B}(x^t), \mathcal{B}(x^t)\rangle$.
Then, from~\eqref{eqn:SCProp3},~\eqref{eqn:SCProp4} and~\eqref{eqn:DefGkApprox}, it follows that
\begin{equation}\label{eqn:DkGkProof3}
\begin{split}
V^t &\underset{(i)}{=} 2\langle \nabla f(\mathcal{B}(x^t)),\mathcal{B}(h^t-x^t)\rangle + \langle \nabla f(\mathcal{B}(x^t)),\mathcal{B}(x^t)\rangle\\
&\underset{(ii)}{=} -2G_t^a -2g(h^t)+2g(x^t) + \langle \nabla f(\mathcal{B}(x^t)),\mathcal{B}(x^t)\rangle\\
&\underset{(iii)}{=} -2G_t^a -2g(h^t)+2g(x^t) - \theta,
\end{split}
\end{equation}
where (i) follows from~\eqref{eqn:SCProp3}, (ii) follows from~\eqref{eqn:DefGkApprox}, and (iii) follows from~\eqref{eqn:SCProp4}.
Combining~\eqref{eqn:DkGkProof1},~\eqref{eqn:DkGkProof2} and~\eqref{eqn:DkGkProof3},
\begin{equation*}
\begin{split}
D_t^2 &\leq [(G_t^a+\theta)+(g(h^t)-g(x^t))]^2 -2G_t^a -2g(h^t)+2g(x^t) - \theta\\
&= (G_t^a+\theta)^2 + 2(G_t^a+\theta)(g(h^t)-g(x^t))+ (g(h^t)-g(x^t))^2-2G_t^a -2g(h^t)+2g(x^t) - \theta\\
&\underset{(i)}{\leq} (G_t^a+\theta)^2+ 2(G_t^a+\theta)(g(h^t)-g(x^t))+ (g(h^t)-g(x^t))^2- 2(g(h^t)-g(x^t))\\
&= (G_t^a+\theta)^2 + 2(G_t^a+\theta-1)(g(h^t)-g(x^t))+(g(h^t)-g(x^t))^2\\
&\underset{(ii)}{\leq} (G_t^a+\theta)^2+ 2R_g(G_t^a+\theta - 1)+R_g^2\\
&\leq (G_t^a+\theta)^2  + 2R_g(G_t^a+\theta)+R_g^2\\
&= (G_t^a+\theta + R_g)^2,
\end{split}
\end{equation*}
where (i) uses that the fact that $G_t^a \geq 0$ and $\theta \geq 0$, and (ii) uses the definition of $R_g$~\eqref{eqn:DefRh}.
\end{proof}

\bibliography{refLong,references}

\end{document}